\newcommand{\blob}{\rule[.2ex]{.8ex}{.8ex}}
\newcommand{\R}{\mathbb{R}}
\newcommand{\C}{\mathbb{C}}
\newcommand{\N}{\mathbb{N}}
\newcommand{\Z}{\mathbb{Z}}
\newcommand{\HP}{\mathbb{H}}
\newcommand{\SL}{{\rm SL}}
\newcommand{\GL}{\rm GL}
\newcommand{\Mat}{{\rm Mat}}
\newcommand{\tr}{\mbox{tr}}
\newcommand{\Fscr}{\mathscr{F}}
\newcommand{\Ecal}{\mathcal{E}}	
\newcommand{\conorm}{\mathrm{m}}
\newcommand{\Leb}{{\rm Leb}}
\newcommand{\sabs}[1]{\left| #1 \right|} 
\newcommand{\abs}[1]{\bigl| #1 \bigr|} 
\newcommand{\norm}[1]{\lVert#1\rVert} 
\newcommand{\normtwo}[1]{
	{\left\vert\kern-0.25ex\left\vert\kern-0.25ex\left\vert #1
		\right\vert\kern-0.25ex\right\vert\kern-0.25ex\right\vert} }
\newcommand{\avg}[1]{\left< #1 \right>} 
\newcommand{\la}{\lambda}
\newcommand{\om}{\omega}
\newcommand{\Proj}{\mathbb{P}(\R^ 2)}
\newcommand{\Pp}{\mathbb{P}}
\newcommand{\Cp}{\mathbb{CP}}
\newcommand{\Bscr}{\mathscr{B}}
\newcommand{\uA}{\underline{A}}
\newcommand{\Ascr}{\mathscr{A}}
\newcommand{\Mscr}{\mathcal{M}}
\newcommand{\Kscr}{\mathscr{K}}
\newcommand{\Rscr}{\mathscr{R}}
\newcommand{\Wscr}{\mathscr{W}}
\newcommand{\Ainv}{\Ascr_{\mathrm{inv}}}
\newcommand{\Asing}{\Ascr_{\mathrm{sing}}}
\newcommand{\Matdm}{{\rm Mat}^+_2(\R)}
\newcommand{\Hscr}{\mathscr{H}}
\newcommand{\Gscr}{\mathscr{G}}
\newcommand{\tuA}{\underline{\tilde A}}
\newcommand{\Kinv}{K_{{\rm inv}}}
\newcommand{\Amap}{\hat A}
\newcommand\restr[2]{{
		\left.\kern-\nulldelimiterspace 
		#1 
		\vphantom{\big|} 
		\right|_{#2} 
}}
\theoremstyle{plain}
\newtheorem{theorem}{Theorem}[section]
\newtheorem{proposition}{Proposition}[section]
\newtheorem{corollary}{Corollary}[section]
\newtheorem{lemma}{Lemma}[section]
\theoremstyle{definition}
\newtheorem{definition}{Definition}[section]
\theoremstyle{definition}
\newtheorem{remark}{Remark}[section]
\newtheorem{example}[theorem]{Example}
\numberwithin{equation}{section}
\newcommand{\rank}{\mathrm{rank}}
\newcommand{\Ker}{\mathrm{Ker}}
\newcommand{\Range}{\mathrm{Range}}
\title[Random two dimensional cocycles]{Random 2D linear  cocycles I: \\dichotomic  behavior}
\date{}
\begin{document}

\author[P. Duarte]{Pedro Duarte}
\address{Departamento de Matem\'atica and CEMS.UL\\
Faculdade de Ci\^encias\\
Universidade de Lisboa\\
Portugal
}
\email{pmduarte@fc.ul.pt}

\author[M. Dur\~aes]{Marcelo Dur\~aes}
\address{Departamento de Matem\'atica, Pontif\'icia Universidade Cat\'olica do Rio de Janeiro (PUC-Rio), Brazil}
\email{accp95@gmail.com}

\author[T. Graxinha]{Tom\'e  Graxinha}
\address{Departamento de Matem\'atica and CEMS.UL\\
	Faculdade de Ci\^encias\\
	Universidade de Lisboa\\
	Portugal}
\email{tfgraxinha@fc.ul.pt}

\author[S. Klein]{Silvius Klein}
\address{Departamento de Matem\'atica, Pontif\'icia Universidade Cat\'olica do Rio de Janeiro (PUC-Rio), Brazil}
\email{silviusk@puc-rio.br}

\begin{abstract}
In this paper  we establish a Bochi-Ma\~n\'e type dichotomy in the space of two dimensional, nonnegative determinant matrix valued, locally constant linear cocycles over a  Bernoulli or Markov shift. Moreover, we prove that Lebesgue almost every such cocycle has finite first Lyapunov exponent, which then implies a break in the regularity of the Lyapunov exponent, from analyticity to discontinuity. 
\end{abstract}

\maketitle


\section{Introduction and statements}\label{aby}

Let $(X,\mu,f)$ be a measure preserving dynamical system, where the state space $X$ is a compact metric space, the transformation $f \colon X \to X$ is continuous and $\mu$ is an $f$-invariant, ergodic Borel probability measure on $X$. Given a bounded, measurable function $A \colon X\to \Mat_m (\R)$, 
we call a linear cocycle over the base dynamics $(X, \mu, f)$ with fiber dynamics driven by $A$
the skew-product map $F \colon X\times\R^m \to X\times\R^m$ defined by
$$F(x, v)=(f (x), \, A(f x) v) \, .$$
The iterates of this new dynamical system are
$F^n(x, v)=(f^n (x), \, A^n (x) v)$, where for all $n\in\N$, 
$$A^n (x):= A(f^{n} x)  \cdots A(f^2 x) \,  A(f x) \, .$$

The first Lyapunov exponent of a linear cocycle $F$ measures the asymptotic exponential growth of its fiber iterates. It is defined via the Furstenberg-Kesten theorem (the analogue of the additive ergodic theorem for multiplicative systems) as the $\mu$-a.e. limit
$$ L_1 (F) = L_1 (A) := \lim_{n\to\infty} \, \frac{1}{n} \, \log \norm{ A^n (x) } \, $$
provided  $A$ satisfies the integrability condition $\int_X \log^+ \norm{A}  d \mu < \infty$.

Moreover, if the norm (that is, the first singular value) of the fiber iterates is replaced by the second singular value, the corresponding $\mu$-a.e. limit above still exists, it is called the second Lyapunov exponent and it is denoted by $L_2 (F) = L_2 (A)$.

An important problem in ergodic theory---with deep applications elsewhere, e.g. in mathematical physics---concerns the regularity of the Lyapunov exponent as a function of the input data (e.g. the fiber map $A$, or the measure $\mu$ or even the base transformation $f$).\footnote{In this paper we actually fix the  base dynamics and identify the linear cocycle with its fiber map.} By regularity we mean its continuity properties (or lack thereof), including its modulus of continuity, its smoothness or analyticity in an appropriate setting etc. 
It turns out that this is a difficult problem in any context, its resolution strongly depending  on the topology of the space of cocycles, on the regularity of the fiber map and on the type of base dynamics considered. 

\smallskip

Indeed, a version of the Bochi-Ma\~n\'e dichotomy theorem in the context of linear cocycles states that given any measure preserving dynamical system $(X, \mu, f)$ where $f$ is a an aperiodic (meaning its set of periodic points has measure zero) homeomorphism, for any fiber map $A \colon X \to \SL_2(\R)$,  either the cocycle associated with $A$ is hyperbolic over the support of $\mu$ or else $A$ is approximated in the $C^0$ topology  by fiber maps with zero Lyapunov exponent. 

By Ruelle's theorem, hyperbolic cocycles are points of analyticity of the Lyapunov exponent; moreover, since the Lyapunov exponent is always upper semicontinuous, it must be continuous at any $\SL_2(\R)$-valued cocycle with zero Lyapunov exponent. Therefore, a fiber map $A \colon X \to \SL_2(\R)$ is a continuity point for the first Lyapunov exponent  in $C^0(X, \SL_2(\R))$ if and only if the corresponding linear cocycle is either hyperbolic over the support of $\mu$ (in which case the Lyapunov exponent is in fact analytic) or $L_1 (A)=0$. 

\smallskip

In other words, if $L_1 (A) > 0$, the first Lyapunov exponent is either analytic or discontinuous at $A$ in the $C^0$ topology of $ \SL_2(\R)$ cocycles.
This behavior may change dramatically when the fiber map is highly regular and it varies in a space endowed with an appropriate, strong topology.\footnote{However, we point out that currently the only reasonably well understood systems are linear cocycles over base dynamics given by: Bernoulli or Markov shifts, or more generally, uniformly hyperbolic maps as well as toral translations and  certain combinations thereof (mixed systems).} 

\smallskip

Indeed, consider a locally constant linear cocycle over a Bernoulli shift. More precisely, let $\Ascr:=\{1,\ldots, k\}$ be a finite alphabet \footnote{In general $\Ascr$ can be a compact metric space, or a non-compact, countable set of symbols, but in this paper we focus on the finite state space setting.} and let $p = (p_1, \ldots, p_k)$ be a probability vector with $p_i > 0$ for all $i$. Denote by $X:=\Ascr^\Z$ the space of bi-infinite sequences $\om = \{\om_n\}_{n\in\Z}$ on this alphabet, which we endow with  the product measure $\mu = p^\Z$. Let $\sigma \colon X \to X$ be the corresponding forward shift $\sigma \om = \{\om_{n+1}\}_{n\in\Z}$. Then $(X, \mu, \sigma)$ is a measure preserving, ergodic dynamical system called a Bernoulli shift (in finite symbols). A $k$-tuple $\underline{A} = (A_1, \ldots, A_k) \in \Mat_2 (\R)^k$ determines the locally constant fiber map $A \colon X \to \Mat_2 (\R)$, $A (\om) = A_{\om_0}$, which in turns determines a linear cocycle over the Bernoulli shift, referred to as a (random) Bernoulli cocycle. We identify this cocycle with the fiber map $A$ and with the tuple $\underline{A}$ and denote by $L_1 (\underline{A}) = L_1 (\underline{A}, p)$ its first Lyapunov exponent.

\smallskip

More generally, we also consider linear cocycles over a Markov shift. That is, let $P\in \Mat_k (\R)$ be a  (left) stochastic matrix, i.e. $P=(p_{i j})_{1\leq i,j\leq k}$ with $p_{ij}\geq 0$
and $\sum_{i=1}^k p_{i j} = 1$ for all $1\le j \le k$.
Given a $P$-stationary probability vector $q$, i.e.,
$q=P\, q$, the pair $(P, q)$ determines a probability measure $\mu$ on $X$  for which the process
$\xi_n \colon X\to \Ascr$, $\xi_n(\omega):=\omega_n$ is
a stationary Markov chain in $\Ascr$ with probability transition 
matrix $P$ and initial distribution law $q$. 
Then 
$(X, \sigma, \mu)$ is a measure preserving dynamical system called a Markov shift.

The stochastic matrix $P$ determines a directed weighted graph on the vertex set $\Ascr$ with an edge $j\mapsto i$, from $j$ to $i$, whenever $p_{i j}>0$.
Sequences in $X$ describing paths in this graph are called $P$-admissible.
In this setting we employ the notation $X$ to refer to the  set  of all $P$-admissible sequences, in other words, to  the support of the Markov measure $\mu$.
Given a finite admissible word $(i_0, i_1,\ldots, i_n)\in \Ascr^{n+1}$ and $k\in\Z$, the set 
$$[k; i_0, i_1, \ldots, i_n] := \left\{ \om \in X  \colon \om_{k+l} = i_l \quad  \text{for all } 0 \le l \le n \right\} $$
is called a cylinder of length $n+1$ in $X$.
Its (Markov) measure is then
$$\mu \left( [k; i_0, i_1, \ldots, i_n] \right) = q_{i_0} \, p_{i_i, i_0} \, \cdots \, p_{i_n, i_{n-1}} \, .$$

We will assume that the matrix $P$ is primitive, i.e. there exists $n\ge1$ such that  $p^n_{i j}>0$ for all entries of the power matrix $P^n$. Then
$\lim_{n\to \infty} p^n_{i j} = q_i > 0$
for all $1\le i, j \le k$ and the corresponding Markov shift $(X, \mu, \sigma)$ is ergodic and mixing.
As before, a $k$-tuple $\underline{A} = (A_1, \ldots, A_k) \in \Mat_2 (\R)^k$ determines a locally constant linear cocycle over this base dynamics, which we refer to as a (random) Markov cocycle. Its first Lyapunov exponent is denoted by $L_1 (\underline{A}) = L_1 (\underline{A}, P, q)$.

\medskip

When we restrict to {\em invertible} random linear cocycles, that is, when $\underline{A} = (A_1, \ldots, A_k) \in \GL_2 (\R)^k$, the Lyapunov exponent is always continuous; moreover, it has a good modulus of continuity (at least in the Bernoulli case and when case  $L_1 > L_2$). This has been the subject of intense research throughout the years, starting with the celebrated work of Furstenberg and Kifer~\cite{FK83}.  The same holds (trivially) when all matrices $A_1, \ldots, A_k$ defining the cocycle $\uA$ are singular (noninvertible).  

\smallskip

However, as it turns out, the behavior of the first Lyapunov exponent at random cocycles with {\em both singular and invertible} components  is strikingly different, showing a dichotomy in the spirit of Bochi-Ma\~n\'e's.

In this paper we consider random cocycles with values in $\Matdm$, the multiplicative semigroup of all $g\in\Mat_2(\R)$ with $\det (g) \ge 0$.
The proof  is based in part upon an extension to $\Mat_2(\R)$-valued  cocycles of a result of Avila, Bochi, Yoccoz~\cite{ABY10} characterizing the class of projectively uniformly hyperbolic (PUH) $\SL_2 (\R)$-valued cocycles.  
 A first step consists in extending this theory to $\SL_2'(\R)$-valued cocycles, where $\SL_2'(\R)$ is the non connected group 
$$\SL_2'(\R):=\{g\in \GL_2(\R)\colon \det(g)=\pm 1\, \} .$$

This extension is relatively simple in the (local) characterization of projective uniform hyperbolicity but surprisingly subtle in the (global) characterization of the boundary of the class of PUH cocycles.
For this reason the general case  of $\Mat_2(\R)$-valued cocycles  will be addressed in a future project, building upon the results and methods of the current work.

The main result of this paper is then the following.

\begin{theorem}
	\label{thm: BM type dichotomy}
Given $\uA\in \Matdm^k$ with at least one singular and one invertible component, the following dichotomy holds:
either $\uA$ is (projectively) uniformly hyperbolic or else it can be approximated by a sequence of cocycles $\uA_n\in \Mat_2^+(\R)^k$ with $L_1(\uA_n)=-\infty$ for all $n \in \N$.
\end{theorem}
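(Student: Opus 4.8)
The plan is to reduce the dichotomy to the known (or extended) theory of projectively uniformly hyperbolic (PUH) cocycles, and to exploit the presence of a singular component to construct the approximating sequence with $L_1 = -\infty$ on the non-hyperbolic side. Suppose $\uA = (A_1, \ldots, A_k) \in \Matdm^k$ is not PUH. The first step is to show that then $\uA$ lies in the boundary of the PUH class, or can be perturbed into its complement, and more precisely that there is a perturbation after which the cocycle becomes ``projectively contractible,'' i.e. one can find an admissible word $w$ and a small perturbation for which the product $A_w$ of the perturbed matrices along $w$ is genuinely rank-drop, namely singular. Here the hypothesis that $\uA$ already has a singular component $A_{i_0}$ is crucial: for a primitive Markov (or Bernoulli) shift every symbol appears in admissible words, so once $A_{i_0}$ is singular, any sufficiently long admissible loop passing through $i_0$ gives a singular product $A_w$, and consequently periodic orbits through $i_0$ already have a forward-contracting direction.

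Second, I would combine this with the extension of the Avila--Bochi--Yoccoz criterion stated in the excerpt (applied to $\SL_2'(\R)$-valued, and then $\Matdm$-valued, cocycles): failure of PUH means that there is no continuous invariant multicone, equivalently the projective action fails to be uniformly contracting along any finite system of cones. Using the characterization of the \emph{boundary} of the PUH class, one extracts, for a cocycle on the non-hyperbolic side, a sequence of arbitrarily small perturbations $\uA_n$ of $\uA$ for which \emph{all} periodic orbits become ``non-hyperbolic'' in the appropriate sense, and in particular one can arrange a periodic admissible word $w_n$ along which the perturbed product is singular while the perturbation keeps all other matrices close to the originals. Since the matrices remain in $\Matdm$, this is a perturbation within $\Mat_2^+(\R)^k$.

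Third, once a singular matrix sits on an admissible periodic loop of the perturbed cocycle, I would upgrade this to $L_1(\uA_n) = -\infty$. The point is that $\log \norm{A(\om)}$ need not be integrable from below: if some $A_i$ is singular, $\log \norm{A^n(\om)}$ can fail the integrability condition $\int \log^+ \norm{\cdot}\, d\mu < \infty$ from the lower side and the Furstenberg--Kesten limit is then $-\infty$. Concretely, after the perturbation one can force two of the (finitely many) matrices to have \emph{complementary} rank-one ranges and kernels so that a short admissible product is the zero matrix, or more robustly, so that the top singular value of the $n$-step product decays superexponentially for $\mu$-a.e. $\om$; primitivity of $P$ guarantees that $\mu$-a.e. orbit meets this configuration infinitely often with positive frequency, which drives $\frac1n \log \norm{A_n^n(\om)} \to -\infty$. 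This is where having \emph{two} special matrices to play with (one already singular, one invertible that can be bent into a singular one) is exactly what is needed.

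The main obstacle I expect is the second step: controlling the global geometry of the PUH boundary for $\Matdm$-valued cocycles rather than $\SL_2(\R)$- or $\SL_2'(\R)$-valued ones. The excerpt itself flags that the boundary characterization is ``surprisingly subtle'' in the non-connected $\SL_2'(\R)$ case, and passing to semigroups with genuine rank drops compounds this, since the projective dynamics is no longer by homeomorphisms of $\Pp(\R^2)$ and cones can collapse to points. I would therefore expect the bulk of the work to be a careful case analysis: either $\uA$ is on the PUH boundary (handled by the extended ABY theory, perturbing within the $\det \ge 0$ semigroup to push across the boundary and then deforming an invertible matrix along a periodic loop into a singular one), or $\uA$ is in the interior of the non-PUH region (where one already has non-dominated behavior and the singular component can be spread along a positive-frequency set of times by primitivity). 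In both cases the endgame is the same superexponential-decay argument giving $L_1(\uA_n) = -\infty$.
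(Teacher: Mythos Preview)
Your proposal has a real gap in the second step. You invoke the Avila--Bochi--Yoccoz \emph{boundary} characterization to ``extract arbitrarily small perturbations for which one can arrange a periodic admissible word along which the perturbed product is singular.'' But that boundary theorem (the paper's Theorem~\ref{ABY2}) only tells you that on $\partial\Hscr$ there is either a parabolic/dilation periodic product or a heteroclinic tangency $A^n(z)\,u(A^k(\omega)) = s(A^\ell(\omega'))$ between hyperbolic periodic points; it says nothing about aligning the \emph{range} of a singular $A_{i_0}$ with the \emph{kernel} of another singular matrix, which is what you actually need (a singular product is not enough --- you need a \emph{null} product $A_w = 0$). Moreover, the boundary theorem is only established for $\GL_2(\R)$-valued cocycles, and the paper explicitly defers its extension to $\Mat_2(\R)$ with genuine rank drops to future work; so you cannot use it here as a black box.

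The paper's route is different and more concrete. It does not use the boundary characterization at all for this theorem. Instead: (i) it stratifies $\Matdm^k$ into analytic manifolds $\Mscr_I$ where the set $I$ of singular indices is fixed, and works inside each stratum; (ii) it introduces one-parameter \emph{positively winding} families, obtained by post-composing the invertible components with rotations $R_t$, so that forward projective orbits of ranges and backward projective orbits of kernels move monotonically in opposite directions in $\Pp^1$; (iii) it proves a direct characterization (Theorem~\ref{thm2: branch charact of UH}): when $\uA_{\rm inv}$ is PUH and $\uA$ is not diagonalizable, $\uA$ is PUH iff $\Wscr^+\cap\Wscr^-=\emptyset$, where $\Wscr^+$ is the closure of the forward invertible orbit of all ranges and $\Wscr^-$ the closure of the backward invertible orbit of all kernels. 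If $\uA$ is not PUH these sets meet, and the opposite monotone winding then forces, by an intermediate-value argument, an \emph{exact} equality $A^{n_k}_{t_k}(\omega_k)\,r_j = A^{-n_k'}_{t_k}(\omega_k')\,k_i$ at a sequence $t_k\to t$, producing a genuine null word. The remaining case, where $\uA_{\rm inv}$ itself is not PUH, is handled by a Yoccoz-type lemma showing that nearby parameters admit elliptic periodic words with irrational rotation number, so that $\Wscr^\pm=\Pp^1$ and the same argument applies. The two ingredients you are missing are this $\Wscr^+/\Wscr^-$ criterion and the positive-winding rotation families that turn near-intersections into exact ones.
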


Using Ruelle's theorem we then conclude the following.

\begin{corollary}
	\label{coro: BM type dichotomy}
Given $\uA\in \Matdm^k$ with at least one singular and one invertible  component, if $L_1(\uA)>-\infty$ then the following dichotomy holds: either the Lyapunov exponent $L_1$ is analytic at $\uA$ or else $L_1$ is discontinuous at $\uA$.
\end{corollary}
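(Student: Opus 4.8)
The plan is to deduce Corollary~\ref{coro: BM type dichotomy} directly from Theorem~\ref{thm: BM type dichotomy} together with two standard facts: Ruelle's analyticity theorem for (projectively) uniformly hyperbolic cocycles and the upper semicontinuity of the Lyapunov exponent. So suppose $\uA\in\Matdm^k$ has at least one singular and one invertible component and that $L_1(\uA)>-\infty$. Apply Theorem~\ref{thm: BM type dichotomy}: either $\uA$ is projectively uniformly hyperbolic, or $\uA$ is a limit of cocycles $\uA_n\in\Mat_2^+(\R)^k$ with $L_1(\uA_n)=-\infty$.

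In the first case, I would invoke Ruelle's theorem, in the form applicable to locally constant $\Matdm$-valued cocycles over the Bernoulli or Markov shift: projective uniform hyperbolicity forces the existence of a dominated (here, in fact uniformly hyperbolic in the projective sense) splitting, which is an open condition, and on the open set of cocycles admitting such a splitting the top Lyapunov exponent depends analytically on the tuple $\uA$. Hence $L_1$ is analytic in a neighborhood of $\uA$, which is the first alternative of the claimed dichotomy. (Here one should note that projective uniform hyperbolicity persists under the semigroup operation and that the relevant perturbations stay within $\Matdm^k$, or rather that analyticity is a statement about the real-analytic function $\uA\mapsto L_1(\uA)$ on the ambient space $\Mat_2(\R)^k$ restricted to this open set; I would cite the precise version from the PUH theory developed for this paper.)

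In the second case, I would argue by contradiction that $L_1$ is discontinuous at $\uA$. Indeed, $L_1$ is always upper semicontinuous as a function of the tuple (this follows from the subadditive structure: $\frac1n\int\log\norm{A^n}\,d\mu$ is an infimum of continuous functions of $\uA$, hence upper semicontinuous, and the Lyapunov exponent is its limit/infimum). If $L_1$ were continuous at $\uA$, then from $\uA_n\to\uA$ we would get $L_1(\uA_n)\to L_1(\uA)$; but $L_1(\uA_n)=-\infty$ for all $n$ while $L_1(\uA)>-\infty$ by hypothesis, a contradiction. Therefore $L_1$ is discontinuous at $\uA$, which is the second alternative.

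Finally I would note that the two alternatives of Theorem~\ref{thm: BM type dichotomy} are mutually exclusive under the hypothesis $L_1(\uA)>-\infty$: a cocycle approximable by cocycles with $L_1=-\infty$ cannot be a continuity point of $L_1$, hence cannot be projectively uniformly hyperbolic (at a PUH cocycle $L_1$ is analytic, in particular continuous). Thus the dichotomy in the corollary is genuine. The only mild subtlety — and the one step that needs care rather than being purely formal — is pinning down the correct statement of Ruelle's theorem in the $\Matdm$-valued, locally constant setting and checking that projective uniform hyperbolicity is exactly the hypothesis under which it applies; everything else is a two-line deduction from Theorem~\ref{thm: BM type dichotomy} and semicontinuity.
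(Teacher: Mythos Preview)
Your proposal is correct and follows essentially the same approach as the paper: split according to Theorem~\ref{thm: BM type dichotomy}, invoke Ruelle's theorem in the PUH case, and in the other case use that a sequence $\uA_n\to\uA$ with $L_1(\uA_n)=-\infty$ and $L_1(\uA)>-\infty$ forces discontinuity. The paper packages this through its parametrized framework (Corollary~\ref{cor1ManeBochi}) applied on the analytic submanifolds $\Mscr_I=\{\uA:\det A_i=0\text{ iff }i\in I\}$; this is exactly the device that resolves the ``mild subtlety'' you flag, namely what ``analytic at $\uA$'' means when $\uA$ sits on the boundary of $\Matdm^k$.
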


Moreover, an argument involving subharmonic functions  (see Corollary~\ref{cor2ManeBochi}) shows that
in fact $L_1(\uA)> - \infty$ holds for Lebesgue almost every $\uA\in \Matdm^k$. Furthermore, using a topological argument, the set of continuity points of $L_1$ is a Baire residual subset of $\Matdm^k$.

Putting together other recent results on random (Bernoulli or Markov) two dimensional cocycles in finite symbols, we obtain the following almost complete picture on the regularity of
their first Lyapunov exponent $L_1$.

\blob \; $L_1$ is a continuous function on $\Mat_2(\R)^k$ at any invertible cocycle $\uA$; moreover, its regularity  varies from log-H\"older continuous\footnote{Given a metric space $(M, d)$, a function $\phi\colon M \to \R$ is said to be log-H\"older continuous if $\abs{\phi (x) - \phi (y)} \le C \left( \log \frac{1}{d (x, y)} \right)^{-1}$ for some $C<\infty$ and all $x, y \in M$.} to analytic (see~\cite{Rue79a},~\cite{BV},~\cite{Viana-M},~\cite{Tall-Viana},~\cite{DKS19},~\cite{DK-CBM},~\cite{DK-Holder}).

\smallskip 

\blob\, In the algebraic variety
$$ \Rscr_1=\big\{ \uA\in \Mat_2 (\R)^k \colon  \rank(A_i)=1 \, \,  \forall \, 1\leq i\leq k \big\} $$
the map $L_1$ is always continuous. Moreover, every $\uA \in \Rscr_1$ is a continuity point of the Lyapunov exponent $L_1$ in $\Mat_2(\R)^k$. 

It turns out (see Theorem~\ref{ManeBochiRank1}) that a cocycle $\uA\in \Rscr_1$ is projectively uniformly hyperbolic if and only if $L_1(\uA)>-\infty$, which is equivalent to the absence of null words (i.e. vanishing finite products of components of $\uA$). This latter condition holds for almost every such cocycle. Therefore $L_1$ is analytic at Lebesgue almost every $\uA\in \Rscr_1$.

\smallskip 

\blob\, Given a cocycle $\uA\in \Matdm^k$ with at least one inver\-ti\-ble and one singular component,  either $L_1$ is analytic or it is discontinuous at $\uA$.

\smallskip 

\blob\, As mentioned above, the extension to the remaining case, when the cocycle $\uA\in \Mat_2(\R)^k$ admits a singular component and an invertible with negative determinant will be considered in a future work.

\medskip

We also prove the following (see Section~\ref{puh} for the definitions of relevant concepts).

\begin{theorem}
A random Bernoulli or Markov linear cocycle $\uA\in \Mat_2(\R)^k$ is projectively uniformly hyperbolic if and only if
it admits an invariant multi-cone.
\end{theorem}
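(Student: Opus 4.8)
The bulk of the work behind this statement is the extension (carried out in the preceding sections) of the Avila--Bochi--Yoccoz characterization of projective uniform hyperbolicity from $\SL_2(\R)$-valued to $\SL_2'(\R)$- and $\Matdm$-valued locally constant cocycles; granting that theory, I would prove the two implications separately, using the definitions of PUH and of (invariant) multi-cone from Section~\ref{puh}.

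\emph{The easy direction: an invariant multi-cone implies PUH.} This is the classical cone criterion, adapted to a semigroup that may contain singular matrices. Let $\mathcal C=\mathcal C_1\sqcup\cdots\sqcup\mathcal C_r\subset\Proj$ be an invariant multi-cone, so that each $A_i$ sends the pieces $\mathcal C_j$ compatible with admissible transitions strictly inside pieces of $\mathcal C$, and $\Ker A_i$ stays off the closure of every piece on which $A_i$ is required to act. By compactness of $\Proj$ and of the $\overline{\mathcal C_j}$, strict invariance upgrades to a uniform contraction: there exist $n_0\ge1$ and $\theta\in(0,1)$ such that every admissible product of length $n_0$ contracts the Hilbert metric of the relevant piece by a factor at most $\theta$. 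A standard argument then produces a continuous, equivariant section $\om\mapsto\ell^+(\om)\in\Proj$, determined by the past coordinates of $\om$ and along which the cocycle expands uniformly, together with a transverse contracting section --- that is, PUH. The one genuinely new point compared with the invertible case is that a rank-one $A_i$ collapses $\Proj\setminus\{\Ker A_i\}$ onto $\{\Range A_i\}$, so the kernel-avoidance hypothesis is exactly what keeps the contraction estimate meaningful; it also rules out null words (a product $A_{i_n}\cdots A_{i_1}=0$ would force $\Range A_{i_t}=\Ker A_{i_{t+1}}$ for some $t$, with $\Range A_{i_t}$ lying in a piece on which $A_{i_{t+1}}$ must act, a contradiction), in accordance with PUH forcing $L_1(\uA)>-\infty$.

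\emph{The hard direction: PUH implies an invariant multi-cone.} Assume $\uA$ is PUH. The extended theory supplies the continuous equivariant ``unstable'' section $\om\mapsto\ell^+(\om)$, depending only on the past of $\om$, together with the complementary ``stable'' section $\ell^-$; for each symbol $i$ set $K_i:=\overline{\{\ell^+(\om)\colon\om_0=i\}}$ and $K:=\bigcup_i K_i$. Equivariance gives $A_j\cdot K_i\subseteq K_j$ along admissible transitions $i\to j$, PUH makes these inclusions strict after a uniformly bounded number of steps, and PUH also forces $K$ to be disjoint from the closure of the stable directions, hence in particular from every $\Ker A_i$. Thus $K$ is a compact, strictly invariant, kernel-avoiding subset of $\Proj$ --- equivalently, the attractor of the multivalued cone map $U\mapsto\{A_i\cdot U\}$ started from a suitable arc. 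To finish, one must replace $K$ by a genuine multi-cone: an \emph{open} set with \emph{finitely many} connected components carrying the same invariance, obtained by a controlled thickening of $K$ using the uniform strictness of the inclusions $A_jK_i\Subset K_j$.

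\emph{The main obstacle.} The delicate point --- the ``surprisingly subtle'' global part of the theory --- is to show that $K$ (equivalently, the stabilized cone map) has only finitely many connected components. A priori $K$ may be a Cantor-type set, so finiteness has to be squeezed out of the rigidity of the hyperbolic structure together with the finite-alphabet, locally constant nature of the cocycle. Following the ABY scheme, one analyzes the gaps of $K$ in $\Proj$ under the backward action of the \emph{invertible} generators, invokes a uniform projective-distortion bound coming from PUH to cap the number of gaps of any fixed size, and runs an accumulation argument to conclude that only finitely many gaps survive. Adapting this to $\Matdm$ is the real novelty and, I expect, the crux of the proof: a singular $A_i$ is not invertible and contributes to $K$ only the single point $\Range A_i$, so it creates no gap of its own, and the gap analysis must be run with the invertible generators alone while verifying that these still ``see'' enough of $K$ to pin down its topology. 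Once $K$ is known to have finitely many components, the thickening step is routine, yielding the desired invariant multi-cone.
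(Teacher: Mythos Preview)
Your approach is a genuine attempt to redo the Avila--Bochi--Yoccoz argument directly for $\Mat_2(\R)$-valued cocycles, and you correctly identify where that would hurt: the gap/finite-components analysis with singular generators present. The paper, however, does \emph{not} do this. It bypasses the whole issue with a \emph{desingularization} trick: for each rank-one $A_i$ with singular value decomposition $R_i\,\mathrm{diag}(\|A_i\|,0)\,R_i^\ast$, replace the zero singular value by $\mu^{-2}$ to get an invertible $\tilde A_{\mu,i}$, then normalize to $A^\ast_{\mu,i}\in\SL_2'(\R)$. The key observation (Lemma~\ref{desingularization lemma}) is that invariant multi-cones transfer both ways between $\uA$ and $\uA^\ast_\mu$: any multi-cone for $\uA^\ast_\mu$ is automatically one for $\uA$ (the singular $A_i$ contracts at least as hard along the same singular directions), and any multi-cone for $\uA$ works for $\uA^\ast_\mu$ once $\mu$ is large (by stability of strict inclusion). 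With this in hand, both implications reduce to the already-established $\SL_2'(\R)$ case (Theorem~\ref{ABY1}): for PUH $\Rightarrow$ multi-cone, note that PUH is open so $\uA^\ast_\mu$ is PUH for large $\mu$, apply ABY there, and pull the multi-cone back; for the converse, push the multi-cone forward to $\uA^\ast_\mu$, get the growth bound $\|(A^\ast_\mu)^n(\omega)\|\ge c\lambda^n$, and translate this into the ratio condition $\|A^n(\omega)\|\ge c\lambda^{2n}\,\conorm(A^n(\omega))$, which gives PUH via the adaptation of \cite[Proposition~2.1]{Viana2014}.

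So the paper never reruns the gap analysis in the singular setting; the ``surprisingly subtle'' finite-components argument is invoked only once, inside the $\SL_2(\R)$ black box. Your route might be made to work, but it is substantially harder than necessary, and your plan to run the gap analysis using only the invertible generators is fragile---in the extreme case where every $A_i$ has rank one there are no invertible generators at all, yet the theorem (and the paper's proof) still applies. The desingularization approach buys uniformity: one argument, no case splits on how many generators are singular, and all the combinatorial subtlety stays quarantined in the classical $\SL_2(\R)$ theorem.
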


Finally, we provide a formal model (see example~\ref{example CS}) of a one-parameter family of Bernoulli linear cocycles $\uA_t \in  \Mat_2^+ (\R)^2$ associated to the random Schr\"odinger operator whose potential takes two values, a finite one $a \in \R$ and $\infty$, as in Craig and Simon~\cite[Example 3]{CraigSimon}. For each of these cocycles, one component is invertible and the other one is singular. The map $t\mapsto L_1 (\uA_t)$ is almost everywhere discontinuous on the spectrum of this operator, which correlates well with the fact (proven by Craig and Simon) that a related physical quantity, the integrated density of states (IDS) of the operator is discontinuous.   
 
\medskip
The rest of the paper is organized as follows. 
In Section~\ref{aby} we extend two of the main theorems in~\cite{ABY10} from $\SL_2(\R)$-valued cocycles to $\GL_2(\R)$-valued cocycles.
In Section~\ref{puh} we formally introduce and characterize the concept of  PUH cocycle, extending the first of the previous results from $\GL_2(\R)$ to  $\Mat_2(\R)$-valued cocycles.  In Section~\ref{dichotomy} we establish, now in the context of $\Matdm$-valued cocycles,  the main result of this paper, Theorem~\ref{thm: BM type dichotomy}, and its consequences regarding the dichotomic behavior of random cocycles with both invertible and singular components.  Finally, in Section~\ref{examples} we provide several examples of classes of cocycles fitting the setting of this paper, including some that are relevant in mathematical physics.

\section{Avila-Bochi-Yoccoz Theory}\label{aby}
	
A. Avila, J. Bochi and J.C. Yoccoz studied 
 the class of uniformly hyperbolic $\SL_2(\R)$ cocycles over sub-shifts of finite type,
 characterizing its boundary within the space of all such cocycles. In this section we extend two results of their theory  to $\GL_2(\R)$ cocycles.

Throughout the section we fix a topologically mixing sub-shift of finite type $\sigma:X\to X$  over the alphabet $\Ascr=\{1, \ldots, k\}$, where $X$ denotes the space of admissible sequences.
This could be determined by the choice of a primitive stochastic $k$ by $k$ matrix.
As before, any tuple $\uA\in \GL_2(\R)^\Ascr$ determines the linear cocycle
$F_{\uA}:X\times\R^2\to X\times\R^2$ by
$F_{\uA}(\omega, v):=(\sigma\omega, A_{\omega_1}\, v)$.

Below $G$ refers to any of the following three groups. 
\begin{align*}
	\SL_2(\R) &:=\{ g\in \Mat_2(\R) \colon \det(g)=1\, \}\\
	\SL_2'(\R) &:=\{ g\in \Mat_2(\R) \colon \det(g)=\pm 1\, \}\\
	\GL_2(\R) &:=\{ g\in \Mat_2(\R) \colon \det(g)\neq 0 , \} 
\end{align*}

A cocycle $F_{\uA}:X\times \R^2 \to X\times \R^2$
determined by a tuple
$\uA\in \GL_2(\R)^\Ascr$ is said to be
\textit{projectively uniformly hyperbolic} if 
it has dominated splitting. A precise definition in the more general framework of $\Mat_2(\R)$-valued cocycles is given in Definition~\ref{def puh mat2}.
	
	The following two theorems hold for any of these groups. See the definition of \textit{multi-cone} in the Bernoulli and Markov frameworks after theorems 2.2 and 2.3 in~\cite{ABY10}, otherwise see definitions~\ref{multi-cone: Bernoulli} and~\ref{multi-cone: Markov} below.

\begin{theorem}
	\label{ABY1}
	Over the given sub-shift, a cocycle $\uA\in G^\Ascr$ is projectively uniformly hyperbolic \, iff\, 
	there exists an $\uA$-invariant multi-cone.
\end{theorem}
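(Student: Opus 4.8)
\medskip

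\noindent\textbf{Proof proposal.} The plan is to reduce the three cases to that of $G=\SL_2'(\R)$ and then to follow the corresponding arguments of~\cite{ABY10}, checking that they do not rely on the $\SL_2(\R)$ matrices preserving the orientation of $\mathbb{P}(\R^2)$. The reduction is immediate: both properties in the statement of Theorem~\ref{ABY1} depend on $\uA$ only through the projective maps induced by $A_1,\dots,A_k$ on $\mathbb{P}(\R^2)$. Indeed, domination is expressed through ratios of norms of iterates, and these are unaffected if each component is rescaled by a nonzero scalar; and a multi-cone is a subset of $\mathbb{P}(\R^2)$ whose invariance is tested only against the projective action of the $A_i$. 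Since $g$ and $g/\sqrt{\lvert\det g\rvert}$ induce the same map on $\mathbb{P}(\R^2)$, the statement for $\GL_2(\R)$ follows from the statement for $\SL_2'(\R)$, and $\SL_2(\R)$ is a sub-case of the latter.

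For $G=\SL_2'(\R)$ I would argue the two implications as follows. \emph{Invariant multi-cone $\Rightarrow$ dominated splitting.} Given an $\uA$-invariant multi-cone $\mathscr{C}=\bigsqcup_j\mathscr{C}_j$ (one closed union of arcs per symbol, adapted to the sub-shift), the cone-field criterion of~\cite{ABY10} applies: iterating $\mathscr{C}$ forward along admissible words contracts it uniformly, so for each admissible $\omega$ the nested arcs obtained from the past of $\omega$ shrink to a point, defining a continuous, invariant line field $E^u_\omega$; the symmetric construction under $\sigma^{-1}$ yields a transverse line field $E^s_\omega$, and the uniform projective contraction upgrades in the usual way to the norm estimate that defines domination. \emph{Dominated splitting $\Rightarrow$ invariant multi-cone.} Conversely, let $E^u_\omega\oplus E^s_\omega$ be the dominated splitting; fix disjoint open neighbourhoods $V^u,V^s\subset\mathbb{P}(\R^2)$ of the compact sets $\overline{\{E^u_\omega\}}$ and $\overline{\{E^s_\omega\}}$. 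By domination there are $\lambda<1$ and $N\ge1$ such that every admissible word $w$ of length $N$ sends $\mathbb{P}(\R^2)\setminus V^s$ projectively into $V^u$ as an arc of length $\le\lambda^N$. Taking $\mathscr{C}_i$ to be a small closed neighbourhood of the finite union, over admissible words $w$ of length $N$ that begin with $i$, of these images, each $\mathscr{C}_i$ is a finite union of tiny arcs and $\mathscr{C}=\bigsqcup_i\mathscr{C}_i$ is a strictly invariant multi-cone; this is the construction of~\cite{ABY10}.

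The point I expect to require care is that matrices in $\SL_2'(\R)\setminus\SL_2(\R)$ act on the circle $\mathbb{P}(\R^2)$ by orientation-reversing homeomorphisms, so one must verify that none of the steps above secretly uses preservation of the cyclic order. This is benign for Theorem~\ref{ABY1}: the notion of multi-cone and the invariance relation are phrased purely in terms of arcs and of the combinatorics of admissible transitions---both orientation-free---and the contraction estimates in both directions only use that the induced maps are homeomorphisms uniformly contracted by the cocycle along admissible words. The single piece of extra bookkeeping is to keep track of the sign of $\det A_w$ along admissible words, which at worst reflects the arcs involved. I expect the genuine difficulty with orientation-reversal to appear not here but in the companion statement characterizing the boundary of the set of PUH cocycles---the ``surprisingly subtle'' extension referred to in the introduction---where the finer combinatorial structure of the unstable bundle is used.
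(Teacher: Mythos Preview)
Your reduction from $\GL_2(\R)$ to $\SL_2'(\R)$ via $g\mapsto g/\sqrt{|\det g|}$ matches the paper exactly, and the paper also agrees that the implication ``multi-cone $\Rightarrow$ PUH'' goes through by the same cone-contraction argument as in~\cite{ABY10}, orientation playing no role there.

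The substantive difference is in the direct implication ``PUH $\Rightarrow$ multi-cone'' for $G=\SL_2'(\R)$. You propose to re-run the ABY10 construction directly and verify that no step depends on orientation. The paper instead sidesteps this verification by a structural reduction: it builds an \emph{oriented double cover} of the sub-shift. The alphabet is doubled to $\tilde{\Ascr}=\Ascr\sqcup\Ascr^\ast$, and each edge $i\to j$ is lifted according to the sign of $\det A_j$ (to $i\to j$ and $i^\ast\to j^\ast$ if $\det A_j>0$, to $i\to j^\ast$ and $i^\ast\to j$ if $\det A_j<0$), so that the lifted cocycle $\tuA$ is orientation-preserving---after a fiberwise orientation flip over $\Ascr^\ast$ it is genuinely $\SL_2(\R)$-valued. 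Projective uniform hyperbolicity passes to the lift via the semi-conjugacy, the known $\SL_2(\R)$ case of~\cite{ABY10} produces a $\tuA$-invariant multi-cone $\tilde M$, the built-in involution $i\leftrightarrow i^\ast$ lets one symmetrize to $\tilde M\cap\tilde M^\ast$, and this symmetric multi-cone then descends to an $\uA$-invariant multi-cone on the original alphabet.

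Both routes are legitimate. Yours is shorter provided the orientation audit of~\cite{ABY10} really is clean, but it requires reopening the internals of that construction rather than quoting it. The paper's double cover treats the $\SL_2(\R)$ result as a black box, at the cost of the covering and symmetrization steps; it also sets up exactly the machinery reused for Theorem~\ref{ABY2}, where---as you correctly anticipate---the orientation issue becomes genuinely delicate.
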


The case $G=\SL_2(\R)$ was established in~\cite[theorems 2.2 and 2.3]{ABY10}.

\bigskip

Next denote by  $\mathscr{H}(G)$  the space of projectively uniformly hyperbolic cocycles
$\uA\in G^\Ascr$ cocycles over the given sub-shift of finite type.
The set $\mathscr{H}(G)$ is open in $G^\Ascr$  and has countably many connected components if  $\Ascr$ has more than one element.

\medskip

We say that a matrix $g\in \GL_2(\R)$ is
\begin{itemize}
	\item \textit{projectively hyperbolic} if it possesses eigenvalues with distinct absolute values. This means that the projective action of $g$ has both an attractive and a repelling fixed points, respectively denoted by $s(g)$ and $u(g)$.
	\item  \textit{projectively elliptic} if  it possesses non real complex conjugate  eigenvalues. This means that the projective action of $g$ has no fixed points, and hence must be a $\theta$-rotation by some angle $0<\theta<\pi$. In this case  $\det(g)>0$.
	\item  \textit{projectively parabolic} if it is non diagonalizable but has a double real  eigenvalue. This means that the projective action of $g$ has a unique fixed point. In this case  $\det(g)>0$.
	\item a \textit{reflection} if it has eigenvalues $1$ and $-1$.
	This means that the projective action of $g$ is an involution with two fixed points.
	\item a \textit{dilation} if $g=c\, I$ for some $c\neq 0$. This means that the projective action of $g$ fixes all points.
\end{itemize}

The previous classification exhausts all possibilities for $g \in \GL_2(\R)$.

\begin{theorem}
	\label{ABY2}
 If $\uA\in \partial \,\Hscr(G)$  then one of the following possibilities hold:
 \begin{enumerate}
 	\item There exists a periodic point $\omega\in X$, of period $k$, such that $A^k(\omega)$  is either  projectively parabolic or 
 	a dilation.
 	\item There exist periodic points $\omega, \omega'\in X$, of respective periods $k$ and $\ell$, such that the matrices $A^k(\omega)$ and $A^\ell(\omega')$  are projectively hyperbolic 
 	and there exist an integer $m\geq 0$ and a point $z\in W^u_{\rm loc}(\omega) \cap \sigma^{-n} W^u_{\rm loc}(\omega')$  such
 	that
 	$$ A^n(z)\, u(A^k(\omega))=  s(A^\ell(\omega')) .$$
 \end{enumerate}
Furthermore, for each connected component  of $\Hscr(G)$, one can give uniform bounds to the numbers
$k$, $\ell$, $n$  that may appear in the alternatives above.
\end{theorem}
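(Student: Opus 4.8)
The plan is to deduce the cases $G=\SL_2'(\R)$ and $G=\GL_2(\R)$ from the case $G=\SL_2(\R)$ of this theorem (the content of Avila--Bochi--Yoccoz~\cite{ABY10}) by two successive reductions, each of which preserves projective uniform hyperbolicity, the open set $\Hscr(G)$ together with its boundary and its connected components, and the three conditions --- all purely projective --- appearing in alternatives (1)--(2); thus it will suffice to verify the conclusion for the image of $\uA$ under the reduction. \textbf{First reduction (from $\GL_2(\R)$ to $\SL_2'(\R)$):} I would use the positive‑scaling retraction $\pi\colon\GL_2(\R)^\Ascr\to\SL_2'(\R)^\Ascr$, $\pi(\uA)_i:=\abs{\det A_i}^{-1/2}A_i$. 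Rescaling each $A_i$ by a positive scalar changes neither the associated projective cocycle, nor any invariant multi‑cone, nor any invariant splitting $E^u\oplus E^s$, and it scales the growth along $E^u$ and along $E^s$ by one and the same factor; hence projective uniform hyperbolicity is $\pi$‑invariant and $\Hscr(\GL_2(\R))=\pi^{-1}(\Hscr(\SL_2'(\R)))$. As $\pi$ is a continuous open surjection with connected fibres (homeomorphic to $(0,\infty)^\Ascr$), one gets $\partial\,\Hscr(\GL_2(\R))=\pi^{-1}(\partial\,\Hscr(\SL_2'(\R)))$ and a bijection of connected components; and since alternatives (1)--(2) involve the projective action only, they hold for $\uA$ iff they hold for $\pi(\uA)$. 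This reduces the theorem to $G=\SL_2'(\R)$.

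\textbf{Second reduction (from $\SL_2'(\R)$ to $\SL_2(\R)$, via a determinant‑parity cover):} Set $\hat\Ascr:=\Ascr\times\{\pm1\}$ and let $\hat\sigma\colon\hat X\to\hat X$ be the sub‑shift of finite type whose admissible sequences are the $\hat\om=((\om_j,\eta_j))_j$ with $(\om_j)_j$ admissible in $X$ and $\eta_j=\eta_{j-1}\det A_{\om_j}$; then $\rho(\hat\om):=(\om_j)_j$ is a two‑to‑one factor map. With $R_{+1}:=I$ and $R_{-1}:=\diag(1,-1)$, put $\hat A_{(i,\eta)}:=R_\eta^{-1}A_iR_{\eta\det A_i}$. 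One checks $\det\hat A_{(i,\eta)}=(\det A_i)^2=1$, so $\hat A\in\SL_2(\R)^{\hat\Ascr}$, and a telescoping identity gives $\hat A^n(\hat\om)=R_{\eta_n}^{-1}A^n(\om)R_{\eta_0}$; that is, $\hat A$ is cohomologous to the pullback cocycle $\uA\circ\rho$ over $\hat\sigma$ through the locally constant gauge $\hat\om\mapsto R_{\eta_0}\in\{I,R_{-1}\}$. The deck involution $\tau\colon(\om,\eta)\mapsto(\om,-\eta)$ satisfies $\hat A_{(i,-\eta)}=R_{-1}^{-1}\hat A_{(i,\eta)}R_{-1}$, so $(\hat\om,v)\mapsto(\tau\hat\om,R_{-1}v)$ conjugates $\hat A$ to itself; since the dominated splitting is unique when it exists, it is $\tau$‑equivariant and hence descends along $\rho$. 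Therefore $\uA$ is projectively uniformly hyperbolic iff $\hat A$ is, and the continuous map $\uA\mapsto\hat A$ sends $\partial\,\Hscr(\SL_2'(\R))$ into $\partial\,\Hscr(\SL_2(\R)^{\hat\Ascr})$. (When $\hat\sigma$ is not connected --- exactly when $\om\mapsto\det A_{\om_1}$ is a locally constant coboundary --- one runs the argument over each irreducible component, a mixing copy of $(X,\sigma)$; if $\hat\sigma$ is connected but not mixing one passes to a suitable power.)

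\textbf{Conclusion and uniform bounds:} I then apply the $\SL_2(\R)$ case of the theorem to $\hat A$ over $\hat\sigma$ and push the data down by $\rho$. A $\hat\sigma$‑periodic point $\hat\om$ of period $\hat k$ lies over a $\sigma$‑periodic point $\om$ of some period $p$ with $\hat k\in\{p,2p\}$, and $\hat A^{\hat k}(\hat\om)=R_{\eta_0}^{-1}(A^p(\om))^{\hat k/p}R_{\eta_0}$, so the projective type of the return map (and the attracting/repelling fixed points in the hyperbolic case) are simply transported by the fixed matrix $R_{\eta_0}$. In alternative (1): if $\hat k=p$ then $A^p(\om)$ is projectively parabolic or a dilation; if $\hat k=2p$ then $(A^p(\om))^2$ is projectively parabolic or a dilation while $A^p(\om)$ lies in the determinant $-1$ component of $\SL_2'(\R)$, and a short case analysis (a determinant $-1$ matrix is projectively hyperbolic or a reflection; powers of projectively hyperbolic matrices stay so; no elliptic return can occur on $\partial\Hscr$, since a limit of projectively hyperbolic $\SL_2'(\R)$ matrices is never elliptic) forces $A^p(\om)$ to be a reflection, so $A^{2p}(\om)=I$ is a dilation. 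In alternative (2) the return maps project to projectively hyperbolic matrices over $\sigma$‑periodic orbits, the heteroclinic point $z:=\rho(\hat z)$ lands in the corresponding local unstable sets (the coordinate $\eta$ is constant along local stable/unstable sets of $\hat X$), and conjugating by the constant gauge turns the identity $\hat A^n(\hat z)\,u(\hat A^{\hat k}(\hat\om))=s(\hat A^{\hat\ell}(\hat\om'))$ into $A^n(z)\,u(A^p(\om))=s(A^{p'}(\om'))$ for $\uA$. Finally, each connected component of $\Hscr(\GL_2(\R))$ maps, under the two reductions, into a single connected component of $\Hscr(\SL_2(\R)^{\hat\Ascr})$, so the $\SL_2(\R)$ bounds on $\hat k,\hat\ell$ and the intermediate time yield bounds on $k,\ell$ (at most twice as large) and on $n$.

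\textbf{Main obstacle:} The delicate point will be the second reduction, and within it the global/boundary part: one must ensure that the parity cover $\hat\sigma$ genuinely lies in the class of sub‑shifts to which the $\SL_2(\R)$ theorem applies, and --- most importantly --- that a dominated splitting of $\hat A$ really descends to one of $\uA$, which is precisely where uniqueness of the dominated splitting and its equivariance under the deck transformation are needed; a secondary subtlety is the bookkeeping that converts a degenerate determinant $-1$ return (a reflection) into the dilation $A^{2p}(\om)=I$ demanded by alternative (1). This is the ``surprisingly subtle'' global step; by contrast the local statement, Theorem~\ref{ABY1}, is easier, being insensitive to these issues.
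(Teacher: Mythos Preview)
Your approach is essentially the same as the paper's: the normalization $A_i\mapsto\abs{\det A_i}^{-1/2}A_i$ to pass from $\GL_2(\R)$ to $\SL_2'(\R)$, followed by a determinant--parity double cover (what the paper calls the \emph{oriented lift}) to pass from $\SL_2'(\R)$ to $\SL_2(\R)$, then pushing the two alternatives down via the factor map. Your treatment is in fact more careful than the paper's brief sketch --- you make the conjugating gauge $R_\eta$ explicit, you address whether the lifted sub-shift is mixing, and you analyze the reflection sub-case of alternative (1) --- though note that this last point can be bypassed by simply taking the (possibly non-minimal) period $\hat k$ downstairs, since $A^{\hat k}(\omega)$ is conjugate to $\hat A^{\hat k}(\hat\omega)$ and hence already projectively parabolic or a dilation.
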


When $G=\SL_2(\R)$ this theorem  was established in~\cite[Theorem 2.3]{ABY10}.
We will refer to these two theorems as the ABY theorems 1 and 2.  

\bigskip

\noindent 
{\bf Reduction from $\GL_2(\R)$  to $\SL_2'(\R)$}
 
\noindent
Assume for now that the ABY theorems hold for  $G=\SL_2'(\R)$.
Define for each $g\in \GL_2(\R)$, $g^\ast:=(|\det(g)|)^{-1/2}\, g\in \SL_2'(\R)$.
Notice that the map $\GL_2(\R)\ni g\mapsto g^\ast\in \SL_2'(\R)$ is a group homomorphism. The proofs of the following key but obvious remarks will be omitted.

\begin{remark}
	Given $g\in \GL_2(\R)$, 
	\begin{itemize}
		\item $g$ and $g^\ast$ induce the same projective action
		$\hat g:\Proj\to \Proj$;
		\item $g$  is projectively hyperbolic $\Leftrightarrow$ $g^\ast$ is hyperbolic;
		\item $g$ is projectively elliptic  
		$\Leftrightarrow$ $g^\ast$ is elliptic;
		\item $g$ is projectively parabolic  
		$\Leftrightarrow$ $g^\ast$ is parabolic;
		\item $g$ is a dilation  $\Leftrightarrow$ $g^\ast=\pm\, I$.
	\end{itemize}
\end{remark}

\begin{remark}
	Given a cocycle $\uA\in \GL_2(\R)^\Ascr$, 
	\begin{itemize}
		\item $\uA$  is uniformly projectively hyperbolic $\Leftrightarrow$ $\uA^\ast$ is hyperbolic;
		\item $\uA$ and $\uA^\ast$ have the same invariant cones.
	\end{itemize}
\end{remark}

These remarks imply that  ABY theorems also hold for $G=\GL_2(\R)$
by reduction to $G=\SL_2'(\R)$.
We are now left to prove that the ABY theorems hold for $G=\SL_2'(\R)$.

\bigskip

\noindent 
{\bf Reduction from $\SL_2'(\R)$  to $\SL_2(\R)$}

\noindent
 A cocycle $\uA\in \SL_2'(\R)^\Ascr$ can be regarded as a bundle map
$$F_{\uA}:X\times \R^2\to X\times \R^2,\; F_A(\omega, v)= (\sigma\omega, A_{\omega_1}\, v )  . $$
The trivial fiber bundle $X \times \R^2$ admits the trivial (canonical) orientation. If the cocycle $\uA$ has periodic orbits whose product matrices have negative determinant then  $F_{\uA}$ does not preserve orientation. We are going to construct a sort of \textit{oriented double covering bundle} $\tilde X\times\R^2$ and extend $F_{\uA}$ to an orientation (and area) preserving  cocycle $F_{\tilde {\uA}}:\tilde X\times\R^2\to \tilde X\times\R^2$,
which can then can be regarded as an  $\SL_2(\R)$-valued cocycle.

We begin with the construction at the graph level.
Consider the underlying digraph $\Gscr=(\Ascr, \Ecal)$ of the fixed (mixing) sub-shift of finite type over the alphabet $\Ascr$, where $\Ecal$ is the set of admissible transitions $i\to j$ between states $i, j\in \Ascr$. Consider also a cocycle $\uA\in \SL_2'(\R)^\Ascr$.

Let $\tilde \Ascr := \Ascr \sqcup \Ascr^\ast$ (two duplicates of $\Ascr$) and  define $\tilde {\Ecal}$  in the following way.
	Each edge $(i\to j)\in \Ecal$ determines two edges of $\tilde \Ecal$ as follows:
	\begin{itemize}
	\item  $i\to j$ and $i^\ast \to j^\ast$ when $\det(A_{j})>0$;
	\item  $i\to j^\ast$ and $i^\ast \to j$ when $\det(A_{j})<0$. 	
	\end{itemize}
	Set $\tilde \Gscr:=(\tilde \Ascr, \tilde\Ecal)$, let $\tilde X$ denote the space of $\tilde \Gscr$-admissible sequences and orient 
	the trivial bundle $\tilde X \times \R^2$  in a way that
	$$ \{\omega\}\times \R^2 \, \text{ gets the } \begin{cases}
		\text{ canonical orientation } & \text{ if } \omega_0\in \Ascr  \\
		\text{ opposite orientation } & \text{ if } \omega_0\in \Ascr^\ast .
	\end{cases}  $$
	Then define $\tuA \in \SL_2'(\R)^{\tilde \Ascr}$ as follows:
	\begin{itemize}
	\item   If $\det(A_{j})>0$, both transition matrices at $i\to j$ and $i^\ast \to j^\ast$ are $\tilde A_{j} = \tilde A_{j^\ast} :=  A_{j}$ ,
	\item If  $\det(A_{j})<0$,
	both transition matrices at $i^\ast \to j$  and  $i\to j^\ast$ are $\tilde A_{j} = \tilde A_{j^\ast} :=  A_{j}$ .	  	
	\end{itemize}

  We will refer to $F_{\tuA}$ as the \textit{oriented lift} of $F_{\uA}$.
	
	\begin{remark}
	By construction the cocycle $F_{\tilde A}:\tilde X\times \R^2 \to \tilde X\times \R^2$ preserves orientation and area.
	\end{remark}

 	\begin{remark}
   Notice that as an oriented bundle $\tilde X \times \R^2$  is not a trivial bundle because it has fibers with different orientations, but  after a conjugacy that reverses orientations of fibers over $\Ascr^\ast$,  $F_{\tilde A}$ becomes an $\SL_2(\R)$-cocycle over 
   the sub-shift of finite type $\sigma:\tilde X \to \tilde X$.
 	\end{remark}

 	\begin{remark}
 	The (lifted) sub-shift  $\sigma:\tilde X \to \tilde X$ is semi-conjugated to  $\sigma:X \to X$.
	 The cocycle $F_{\tuA}$ is conjugated to  $F_A$ over the previous semi-conjugacy. In particular,
	 $F_{\uA}$ is uniformly hyperbolic if and only if	 $F_{\tuA}$ is uniformly hyperbolic. 
	\end{remark}

The graph $\tilde \Gscr$ admits an involution symmetry:

\blob\; At the level of nodes the maps $\Ascr\to \Ascr^\ast$, $i\mapsto i^\ast$ and  $\Ascr^\ast\to \Ascr$, $i^\ast\mapsto i^{\ast\ast}:=i$, give rise to an involution $\ast:\tilde \Ascr\to \tilde \Ascr$. 

\blob\; This symmetry extends to edges:
Given nodes $i, j\in \tilde \Ascr$,\; 
$(i\mapsto j)\in \tilde \Ecal$ $\Leftrightarrow$  $(i^\ast \to j^\ast)\in\tilde \Ecal$.
Hence,   defining $(i\to j)^\ast =(i^\ast\to j^\ast)$, we get an involution
$\ast:\tilde \Ecal\to\tilde \Ecal$.

\blob\; The lifted cocycle $\tuA\in \SL_2'(\R)^{\tilde \Ascr}$ preserves this symmetry in the sense that for every edge  $e=(i\to j)\in \tilde \Ecal$,
$$\tilde A(e)=\tilde A_j = A_j = \tilde A_{j^\ast} = \tilde A(e^\ast). $$

\blob\; The previous symmetry extends to an involution
$I:\tilde X\to \tilde X$, $I(\{\omega_n\}_{n\in \Z}):= \{\omega_n^\ast\}_{n\in \Z})$, that commutes with 
 $\sigma:\tilde X\to \tilde X$.

\blob\; The map $\tilde I:\tilde X\times \R^2 \to \tilde X \times \R^2 $, $ \tilde I(\omega, v):=(I(\omega), v)$,
is a  fiber bundle involution map, that commutes with $F_{\tilde A}$.

\blob\; Given a $\tilde A$-invariant multi-cone $M\subset \tilde \Ascr\times \Proj$, the set $$M^\ast:=\{(i^\ast, \hat v)\colon (i, \hat v)\in M\,\}$$
 is also an  $\tilde A$-invariant multi-cone. Notice for instance that given $(i, \hat v)\in M^\ast$ and $(i\to j)\in \tilde \Ecal$, we have
   $(i^\ast, \hat v)\in M$ and 
 $(i^\ast\to j^\ast)\in \tilde \Ecal$, which imply that
 $(j^\ast, \tilde A_{j^\ast} \hat v)\in M$. Hence 
 $$ (j, \tilde A_{j}\hat v ) = (j, \tilde A_{j^\ast} \hat v) \in M^\ast. $$

\blob\; The symmetry of $\tilde A$ implies that this cocycle admits $\tilde A$-invariant multi-cones which are invariant under the previous involution.
In fact, since the intersection of invariant multi-cones is still an invariant multi-cone, $M\cap M^\ast$ is a symmetric $\tilde A$-invariant multi-cone.

\blob\; If $\tilde M$ is a symmetric $\tilde A$-invariant multi-cone
then it determines an $A$-invariant multi-cone $M$ for $A$, defined by
$M_i:=\tilde M_i=\tilde M_{i^\ast}$ for all $i\in \Ascr$.

\begin{proof}[Proof of ABY Theorem 1 (Theorem~\ref{ABY1}  with $G=\SL_2'(\R)$)]
The non -trivial part of this theorem is the direct implication, because the converse  follows with the same argument as in~\cite{ABY10}.

The previous remarks allow us to reduce a uniformly hyperbolic cocycle $\uA\in \SL_2'(\R)^\Ascr$  to
a uniformly hyperbolic cocycle $\tuA\in \SL_2(\R)^{\tilde \Ascr}$. By Theorem~\ref{ABY1} for the case $G=\SL_2(\R)$, there exists an $\tuA$-invariant multi-cone, 
and hence also a symmetric $\tuA$-invariant multi-cone  $\tilde M$. By the last remark above, the symmetric multi-cone $\tilde M$ projects to an $\uA$-invariant multi-cone.
\end{proof}

\begin{proof}[Proof of ABY Theorem 2 (Theorem~\ref{ABY2} with $G=\SL_2'(\R)$)]
Consider the group  $G=\SL_2'(\R)$. A  key point  is that
$\uA\in \partial\, \Hscr(G)$ implies $\tuA\in \partial\, \Hscr(G)$. In fact notice that we already know that $\uA\notin \Hscr(G)$ $\Leftrightarrow$ $\tuA\notin  \Hscr(G)$, and since the construction is continuous, if $\Hscr(G)\ni \uA_n \to \uA$ then  $\Hscr(G)\ni \tuA_n \to \tuA$.

The cocycle $\tuA\in \partial \Hscr(G)$ can be regarded as 
an $\SL_2(\R)$ cocycle over the $\tilde \Ascr$-sub-shift. Thus, by Theorem~\ref{ABY2} for the group $G=\SL_2(\R)$, one the  two stated alternatives holds for $\tuA$.
Since periodic points of $\sigma:\tilde X\to \tilde X$  correspond to periodic points of $\sigma: X\to X$, and the the cocycles $F_{\tuA}$ and $F_{\uA}$ are semi-conjugated, the satisfied alternative holds for $\uA$ as well.
\end{proof}

\section{Projective uniform hyperbolicity}\label{puh}

In this Section we extend Theorem~\ref{ABY1} to $\Mat_2(\R)$-valued cocycles. 
Consider a random Markov linear cocycle $F \colon X\times \R^2\to X\times \R^2$ determined by the data $(P,q,\uA)$
where $\uA :=(A_i)_{i\in\Ascr} \in \Mat_2(\R)^\Ascr := \Mat_2(\R)^k$.

\begin{definition}
\label{def puh mat2}
We say that a linear cocycle $F$ is \textit{projectively uniformly hyperbolic} if there exists an $F$-invariant decomposition into one-dimensional subspaces  $\R^2=E_0(\omega)\oplus E_1(\omega)$ 
where  the sub-bundles $X\ni \omega \mapsto E_i(\omega)$ are continuous functions and there exists $n\in\N$ such that
$\norm{A^n\vert_{E_0}(\omega)}<\norm{A^n\vert_{E_1}(\omega)}$ for all $\omega\in X$. 
\end{definition}

Because $X$ is compact and $A\colon X\to \Mat_2(\R)$ is continuous, the last condition is equivalent to 
$$
\sup_{\omega\in X} \frac{ \norm{A^n\vert_{E_0}(\omega)} }{\norm{A^n\vert_{E_1}(\omega)}} \leq \lambda^{-1}<1 , $$  
for some $\lambda>1$.
This is further  equivalent to the existence of  $c>0$ and $\la > 1$ such that for all $\omega \in X$ and all $n \in \N$,
	$$
	\norm{A^n|_{E_1} (\omega)} \geq c\lambda^n \,  \norm{A^n|_{E_0}(\omega)}  .
	$$

Note that projective uniform hyperbolicity is also sometimes referred to as \textit{dominated splitting}.

\smallskip

Let $\Pp^1:=\Pp(\R^2)$ denote the projective line. An element of $\Pp^1$ will be denoted by $\hat v$, where $v$ is a nonzero vector (or a one dimensional subspace) in $\R^2$.  

Given an {\em invertible} matrix $A \in \Mat_2 (\R)$, its induced projection action $\hat A \colon \Pp^1 \to \Pp^1$ is given by $\hat A \hat v := \widehat{A v}$.  If $A$ has rank $1$ (that is, if it is nonzero and noninvertible), we define its projection action as the constant map $\hat A \hat v := \hat r$, where $r = \Range (A)$.   Notice that we are \textit{formally removing} the discontinuity of the projective action of a singular matrix at the kernel. 
 
Moreover, let $\conorm(A)$ denote its co-norm (i.e. smallest singular value).
If $A$ is invertible, $\conorm(A)=\norm{A^{-1}}^{-1}$,
otherwise $\conorm(A)=0$.

\begin{definition}(Bernoulli setting)
\label{multi-cone: Bernoulli}
An invariant multi-cone for $\uA=(A_i)_{i\in \mathscr{A}}$ is a nonempty set $M$ such that:
\begin{enumerate}
	\item  $M$ is an  open subset of $\Pp^1$,
	\item its closure $\bar M\neq \Pp^1$,
	\item $A_i M \Subset M$, i.e., $\overline{ A_i M } \subset M$ for every $i \in \mathcal{A}$ .	
\end{enumerate}
\end{definition}

\begin{definition}(Markov setting)
\label{multi-cone: Markov}
An invariant family of multi-cones for $\uA=(A_i)_{i\in \mathscr{A}}$ is a family $(M_i)_{i\in \mathscr{A}}$ such that for all $i,j\in\mathscr{A}$,
\begin{enumerate}
	\item  $M_i$ is an  open subset of $\Pp^1$,
	\item $\bar M_i\neq \Pp^1$,
	\item $A_i M_j \Subset M_i$, i.e., $\overline{ A_i M_j } \subset M_i$, whenever $p_{ij}>0$.	
\end{enumerate}
\end{definition}

The next theorem extends to $\Mat_2(\R)$-cocycles Yoccoz~\cite[Proposition 2]{Ycz04} and  Theorem~\ref{ABY1} (i.e.,~\cite[Theorem 2.3]{ABY10}).

\begin{theorem}
\label{thm: multi-cone charact of UH}
Given  a random linear cocycle $\uA\in \Mat_2(\R)^{\mathscr{A}}$, the following are  equivalent: 
\begin{enumerate}
	\item $\uA$ is projectively uniformly hyperbolic.
	\item There exist $c>0$ and $\lambda>1$ such that for all $n\in\N$ and $\omega\in X$, 
	$\norm{A^n(\omega)}\geq c\, \lambda^n\, \conorm(A^n(\omega))$. 
	\item $\uA$ admits an invariant multi-cone.
\end{enumerate}
\end{theorem}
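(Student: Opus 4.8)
The plan is to prove Theorem~\ref{thm: multi-cone charact of UH} by establishing the cycle of implications $(1)\Rightarrow(2)\Rightarrow(3)\Rightarrow(1)$, with the genuinely new content being the passage through rank-$1$ (singular) components, which is not covered by Yoccoz or by ABY. The implication $(1)\Rightarrow(2)$ is essentially the reformulation already recorded after Definition~\ref{def puh mat2}: if $\R^2 = E_0(\omega)\oplus E_1(\omega)$ is a continuous invariant dominated splitting, then $\norm{A^n(\omega)} \asymp \norm{A^n|_{E_1}(\omega)}$ and $\conorm(A^n(\omega)) \asymp \norm{A^n|_{E_0}(\omega)}$ uniformly (the angle between $E_0$ and $E_1$ is bounded away from $0$ and $\pi$ by compactness of $X$ and continuity of the bundles), so domination of the two invariant directions is equivalent to the singular-value gap. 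One subtlety to flag here: when some $A_i$ has rank $1$, the splitting must satisfy $E_0(\sigma^{-1}\omega) = \Ker A_{\omega_0}$ and $A_{\omega_0} E_1(\sigma^{-1}\omega) = \Range A_{\omega_0}$ whenever $A_{\omega_0}$ is singular, so one should check that continuity of the sub-bundles forces these kernel/range constraints to hold consistently; this is where the ``formally removing the discontinuity'' convention for $\hat A$ on singular matrices does its work.

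**Next I would** prove $(2)\Rightarrow(3)$, the construction of an invariant multi-cone from the uniform singular-value gap. The idea is the standard cone-field argument adapted to the semigroup $\Mat_2^+(\R)$: from $(2)$, pick $N$ with $\norm{A^N(\omega)} \ge \lambda^N \conorm(A^N(\omega))$ for all $\omega$ with $\lambda^N$ large, so that each $\widehat{A^N(\omega)}$ maps all of $\Pp^1$ (minus a small neighborhood of the most-contracted direction, which for a singular $A^N(\omega)$ is simply the kernel, already formally collapsed) into a small arc around the most-expanded direction $\medir(A^N(\omega))$. Define, in the Bernoulli case, $M$ to be a suitable open neighborhood of the closure of $\bigcup_{\omega} \{\medir(A^N(\omega))\}$ chosen small enough that $\bar M \ne \Pp^1$ and $A_i M \Subset M$ for every generator; in the Markov case one does this fiberwise over $\Ascr$ respecting the transition structure $p_{ij}>0$, exactly as in Definitions~\ref{multi-cone: Bernoulli} and~\ref{multi-cone: Markov}. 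The point where care is needed is that a rank-$1$ generator $A_i$ sends \emph{all} of $\Pp^1$ to the single point $\widehat{\Range(A_i)}$, so one must make sure that point lies in $M$ (resp.\ $M_i$); this follows because the uniform gap $(2)$ forces every admissible finite product containing $A_i$ to have its range inside the forward-invariant arc. I would also note that the existence of a multi-cone, together with the fact that $A_i \bar M$ is a compact subset of the open set $M$, automatically gives uniform contraction on $\Pp^1$ for the projective cocycle, i.e.\ Hilbert-metric hyperbolicity.

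**The main obstacle** is $(3)\Rightarrow(1)$: recovering a \emph{continuous invariant splitting} (and the singular-value gap) from a mere invariant multi-cone, when singular matrices are present. In the invertible ($\GL_2$) case this is Theorem~\ref{ABY1}, which I may invoke, but here the cocycle can contain rank-$1$ matrices, and along any orbit passing through such a matrix the "stable'' direction collapses onto a kernel and the forward iterates lose a dimension. The strategy I would take is: define $E_1(\omega) := \bigcap_{n\ge 1} A^n(\sigma^{-n}\omega)\, \overline{M}$, which by invariance $A_iM \Subset M$ is a nested intersection of compact arcs with diameters shrinking geometrically (this needs the contraction drawn from $A_i\bar M \subset M$ via a Hilbert-metric / diameter argument), hence a single point depending continuously on $\omega$ — this is the expanding direction, and it is well-defined and continuous \emph{even through singular matrices} because a singular $A_{\omega_0}$ simply sends the whole previous arc to its range, which still lies in $M$. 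For $E_0(\omega)$ one works with the backward dynamics / the dual cone $\Pp^1 \setminus \overline{M'}$ for a complementary invariant multi-cone, but since the matrices need not be invertible one cannot iterate backward directly; instead I would build $E_0$ as the unique direction that is \emph{not} eventually captured, using that for each $\omega$ there is a well-defined most-contracted direction of $A^n(\omega)$ (the kernel, once $A^n(\omega)$ becomes singular, or the stable direction otherwise) and showing these stabilize. One then verifies $E_0(\omega) \ne E_1(\omega)$ using $\bar M \ne \Pp^1$ (so the expanding arc and its complement are separated), obtains the dominated-splitting inequality from the uniform geometric contraction of diameters, and checks the kernel/range compatibility conditions hold automatically. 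I expect the delicate point to be the continuity of $E_0$ at orbits that hit a singular matrix — precisely the phenomenon that ABY did not have to face — and I would handle it by exploiting the openness of $M$ together with the semigroup homomorphism $g\mapsto g^\ast$ and the rank-$1$ reduction philosophy outlined in the $\Rscr_1$ discussion of the introduction, reducing to controlling finite admissible words and then passing to the limit.
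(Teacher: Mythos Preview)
Your cycle $(1)\Rightarrow(2)\Rightarrow(3)\Rightarrow(1)$ diverges from the paper's, which proves $(1)\Leftrightarrow(2)$ directly (your $(1)\Rightarrow(2)$ together with a Viana-style convergence of singular directions for the converse) and then handles the implications involving (3) by a different device. Your direct plan for $(2)\Rightarrow(3)$ has a real gap: you define $M$ as a neighborhood of $\bigcup_\omega\{\medir(A^N(\omega))\}$ ``chosen small enough that $\bar M\neq\Pp^1$'', but that inequality is precisely the content to be established---there is no a priori reason the most-expanded output directions of all $N$-fold products avoid a fixed arc, and showing they do essentially requires first building the continuous splitting $E_0\oplus E_1$ and proving $\{E_1(\omega)\}$ and $\{E_0(\omega)\}$ are disjoint compacts, i.e.\ going through (1). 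Likewise your $(3)\Rightarrow(1)$ construction of $E_1$ via $\bigcap_n A^n(\sigma^{-n}\omega)\overline M$ is sound, but, as you yourself flag, the $E_0$ side is unfinished: you cannot pull back through singular matrices, and the fallback to most-contracted directions of $A^n(\omega)$ again needs the singular-value gap (2), which you have not yet derived from (3) alone.

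The paper bypasses both difficulties with a single \emph{desingularization} trick. Replace each rank-$1$ generator $A_i=R_i\,\diag(\norm{A_i},0)\,R_i^\ast$ by $\tilde A_{\mu,i}=R_i\,\diag(\norm{A_i},\mu^{-2})\,R_i^\ast$ and normalize determinants to obtain an invertible cocycle $\uA^\ast_\mu\in\SL_2'(\R)^k$. The key lemma is that invariant multi-cones transfer both ways: any multi-cone for $\uA^\ast_\mu$ is automatically one for $\uA$ (the singular matrix contracts more), and any multi-cone for $\uA$ is one for $\uA^\ast_\mu$ once $\mu$ is large (multi-cones are stable under perturbation). Then $(1)\Rightarrow(3)$ is immediate: projective uniform hyperbolicity is open, so $\uA^\ast_\mu$ is PUH for large $\mu$, hence has a multi-cone by Theorem~\ref{ABY1}, which transfers to $\uA$. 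And $(3)\Rightarrow(2)$: a multi-cone for $\uA$ transfers to $\uA^\ast_\mu$, which is therefore uniformly hyperbolic with $\norm{(A^\ast_\mu)^n(\omega)}\ge c\lambda^n$; since $\norm{A^n(\omega)}/\conorm(A^n(\omega))\ge\norm{(A^\ast_\mu)^n(\omega)}^2$ (equality when no singular factor occurs, and the left side is $+\infty$ otherwise), (2) follows. This reduction to the already-proved invertible case is the idea your proposal is missing.
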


\begin{proof}
$(1)\Rightarrow (2)$: Suppose that $\uA$ is projectively uniformly hyperbolic, i.e., we have dominated splitting. Then for some $c>0$ and $\lambda>1$ we have that for all $\omega\in X$ and $n\in\N$,
$$\frac{\norm{A^n(\omega)}}{\conorm(A^n(\omega))}\geq
\frac{\norm{A^n\vert_{E_1}(\omega)}}{\norm{A^n\vert_{E_0}(\omega)}} \geq c\, \lambda^n .
$$

$(2)\Rightarrow (1)$: 
 This is a straightforward adaptation of \cite[Proposition 2.1]{Viana2014} to the case where   $\uA$ takes values in $\Mat_2(\R)$. The idea of this proof is to define the Oseledets invariant directions $E_0(\omega)$ and $E_1(\omega)$ as uniform limits of continuous functions, by exploiting the contracting behavior of the cocycle action on fibers due to the cocycle's hyperbolicity.
 This implies that the Oseledets splitting is continuous.
 In our setting, when singular matrices appear,  their actions contract the whole projective  space to points.
 This helps the convergence and poses no problem regarding the continuity of the approximations,  which are defined as singular directions of the the matrices $A^n(\omega)$ and
 $A^n(\sigma^{-n}\omega)$.

\smallskip

For the remaining implications we  need a
\textit{desingularization construction} that associates a family of invertible cocycles
$\uA^\ast(\mu) \in \SL_2'(\R)$ to every singular cocycle $\uA\in \Mat_2(\R)^k$.
For each $A_i \in \Mat_2(\R)$ with $i \in \mathcal{A}$,   consider its singular value decomposition: $A_i = R_i \Sigma_i R^\ast_i$. If $A_i$ is not invertible, consider a small perturbation of $\Sigma_i$ that transforms its zero singular value into a small constant $\mu^{-2}$. In other words, if $A_i= R_i \begin{bmatrix}
	\norm{A_i} & 0 \\
	0 & 0
\end{bmatrix}R_i^\ast$, let 
$$\tilde{A}_{\mu, i}:= R_i \begin{bmatrix}
	\norm{A_i} & 0 \\
	0 & \mu^{-2}
\end{bmatrix}R_i^\ast = \frac{\norm{A_i}}{\mu} R_i \begin{bmatrix}
	\mu & 0 \\
	0 & \mu^{-1}
\end{bmatrix}R_i^\ast \, .$$
If $A_i$ is invertible put $\tilde A_{\mu, i} := A_i$.  Then set 
$A_{\mu,i}^\ast:= \frac{1}{\sqrt{\det (\tilde{A}{\mu, i})}}\, \tilde A_{\mu,i} $. 
 The cocycle $\uA_{\mu}^\ast =  (A_{\mu , i}^\ast)_{i \in \mathcal{A}} \in \SL_2'(\R)^k$   has the same projective action as $(\tilde A_{\mu , i} )_{i \in \mathcal{A}}$, which for large $\mu$ approximates that of $\uA$.

\medskip

\begin{lemma}
\label{desingularization lemma}
Let $\uA\in \Mat_2(\R)^k$.
\begin{enumerate}
	\item If $(M_i)_{i\in \Ascr}$ is an invariant multi-cone of $\uA^\ast_\mu$,  for some $\mu>0$, then it is also an invariant multi-cone for $\uA$.
	\item If $(M_i)_{i\in \Ascr}$ is an invariant multi-cone of $\uA$ then it is also an invariant multi-cone of $\uA^\ast_\mu$ for all sufficiently large $\mu$.
\end{enumerate}
\end{lemma}

\begin{proof} For   simplicity we only address the Bernoulli case.
	
(1)\; Since $M$ is an invariant  multi-cone for   $\uA_\mu^\ast$,   $\tilde A_{\mu ,i} M \Subset M$ for every $i\in \Ascr$. Moreover, the pair of matrices $A_i$ and $\tilde A_{\mu ,i}$ share the same singular directions, but their contraction strengths are different (infinite in the case of non-invertible matrices vs. finite for invertible ones). Thus since the contraction is stronger for non-invertible matrices, we conclude that $A_i M \subseteq A_{\mu ,i} M  \Subset M$ for every $i\in \Ascr$.

(2)\; This holds since multi-cones are stable under perturbations and $\uA=\lim_{\mu\to \infty} \underline{\tilde A}_\mu$, while  $\uA^\ast_\mu$ and $\underline{\tilde A}_\mu$ share their invariant multi-cones, because they induce the same action on $\Proj$.
\end{proof}

We return to the proof of the theorem.

$(1)\Rightarrow (3)$: If $\uA$ is projectively uniformly hyperbolic then the approximating cocycles $\underline{\tilde A}_\mu$ and $\uA_\mu^\ast$ are also projectively uniformly hyperbolic for large $\mu$. By  Theorem~\ref{ABY1} 
the cocycle $\uA_\mu^\ast$ 
admits an invariant multi-cone $M$. Therefore,  by (1) of Lemma~\ref{desingularization lemma}, $M$ is also an invariant multi-cone for $\uA$.

$(3)\Rightarrow (2)$: Suppose $\uA$ admits an invariant multi-cone $M=(M_i)_{i \in \mathcal{A}}$. By (2) of Lemma~\ref{desingularization lemma}, for some large $\mu$,
 $M$ is also an invariant multi-cone for  $\uA_\mu^\ast$.
Therefore, by Theorem~\ref{ABY1}
 the cocycle $\uA_\mu^\ast$ is uniformly hyperbolic and by \cite[Proposition 2.1]{ABY10} there exists $c>0$ and $\lambda>1$ such that
$\norm{ (A^\ast_\mu)^n(\omega) }\geq c\, \lambda^n$ for all $\omega \in X$, which in turn implies that 
$$ \frac{ \norm{A^n(\omega)} }{\conorm (A^n(\omega))}\geq \frac{ \norm{\tilde A_\mu^n(\omega)} }{\conorm (\tilde A_\mu^n(\omega))} = \norm{ (A^\ast_\mu)^n(\omega) }^2 \geq c\, \lambda^{2n} \quad \forall\; \omega \in X . $$

Note that either $A^n(\omega)=A_{\omega_{n}}\, \cdots \, A_{\omega_1}$  does not include singular matrices so
$A^n(\omega)= \tilde A_\mu^n(\omega)$, or else it does
and then $\conorm(A^n(\omega))=0$, which implies that the left-hand side of the above inequality is $\infty$.
\end{proof}

The rest of this Section is devoted to prove another characterization of PUH. (Theorem~\ref{thm2: branch charact of UH})
that will play a fundamental role in proving the dichotomic behavior of the Lyapunov exponent in the next section.

\begin{definition}
We say that $\uA\in \Mat_2(\R)^{\mathscr{A}}$ has rank $1$ if 
$$\lim_{n\to \infty} \rank(A^n(\omega))=1\quad   \text{ for } \; \mu \text{-almost all }\, \omega\in X. $$ 
\end{definition}

We do not consider cocycles of rank $0$ because in this case the first Lyapunov exponent is equal to $-\infty$. 
Cocycles of rank $2$, that is, in $\GL_2(\R)^{\Ascr}$, are also not considered here because,  as mentioned in the introduction, they have already been extensively studied. Thus, from now on we only consider cocycles $\uA$ of rank $1$.

\begin{remark} It is easy to verify that a cocycle $\uA\in \Mat_2(\R)^{\mathscr{A}}$ has rank $1$ if and only if
\begin{enumerate}
	\item $\rank(A_i)=1$ for some $i\in \mathscr{A}$ and
	\item $\uA$ has no null word, i.e. $A^n(\omega):= A_{\omega_{n}}\, \cdots \, A_{\omega_2}\, A_{\omega_1}\neq 0$ for every $\omega\in X$.
\end{enumerate}	
\end{remark}

We split the alphabet $\Ascr$ into two parts: 

\quad $\Ainv:=\{i\in \Ascr\colon \det A_i\neq 0\}$ and
$\Asing:=\{i\in \Ascr\colon \rank A_i = 1\}$.

By a slight abuse of notation, in the  Markov setting we also write
$\Amap_i \colon \Ascr\times\Proj \to \Ascr\times\Proj$ to denote,  for  each $i\in \Ascr$, the non-invertible map
$\Amap_i(j,\hat v):=  (i,\hat A_i\, \hat v)$. 

\begin{remark}
\label{Markov multi-cone remark}
In the Markov case the family $(M_i)_{i\in\Ascr}$  of multi-cones determines the set $M:=\cup_{i \in \Ascr}\{i\}\times M_i$ which has the  properties:
\begin{enumerate}
	\item  $M$ is an  open subset of $\Ascr\times \Pp^1$,
	\item the closure of each fiber of $M$ is a proper
	subset of $\{i\}\times \Proj$,
	\item $\Amap_i M \Subset M$  for every $i \in \mathcal{A}$ ,	
\end{enumerate}
and we refer to $M$ as an invariant multi-cone of $\uA$.
\end{remark}

For $i\in\Asing$, write $r_i:=\Range(A_i)$ and $k_i:=\Ker(A_i)$ and set
\begin{align*}
	\Kscr(\uA)&:=\{(j,k_i)\colon   i\in\Asing, j \in \Ascr \} \ ,\\
	\Rscr(\uA)&:=\{ (i,r_i) \colon i\in \Asing\} .
\end{align*}
The complement $\Ascr\times\Proj\setminus \Kscr(\uA)$  is the common domain of all maps $\Amap_i$ with $i\in \Asing$, while $\Rscr(\uA)$  is the union of the ranges of these same maps.
In the Bernoulli case we simply define
\begin{align*}
\Kscr(\uA)&:=\{ k_i\colon i\in \Asing\} ,\\
\Rscr(\uA)&:=\{ r_i \colon i\in \Asing\} .
\end{align*}

Given $i\in \Asing$, a \textit{branch departing} from $i$  
is any $P$-admissible word $\underline \omega=(\omega_0, \omega_1, \cdots ,\omega_n)\in \Ascr^{n+1}$ with $n\geq 0$ such that  $\omega_0=i$ and $\omega_l\in \Ainv$ for all $l=1,\ldots, n$.
Similarly, a \textit{branch arriving} at $i$  
is a $P$-admissible word $\underline \omega=(\omega_0, \omega_1, \cdots ,\omega_n)\in \Ascr^{n+1}$ with $n\geq 0$ such that  $\omega_n=i$ and $\omega_l\in \Ainv$ for all $l=0,\ldots, n-1$.
Denote by $\Bscr_n^+(i)$ the set of all branches $\underline \omega=(\omega_0, \omega_1, \cdots ,\omega_n)\in \Ascr^{n+1}$  departing from $i$ and  by $\Bscr_n^-(i)$ the set of all branches $\underline \omega=(\omega_0, \omega_1, \cdots ,\omega_n)\in \Ascr^{n+1}$   arriving at $i$.
For $\underline\omega\in \Bscr_n^+(i)$ we write
$A^{n-1}(\underline\omega):=A_{\omega_{n-1}}\, \ldots\, A_{\omega_2}\, A_{\omega_1}$ while for $\underline\omega\in \Bscr_n^-(i)$ we write
$A^{-(n-1)}(\underline\omega):=A_{\omega_{1}}^{-1}\, A_{\omega_2}^{-1} \, \ldots\, A_{\omega_{n-1}}^{-1}=(A_{\omega_{n-1}}\, \ldots\, A_{\omega_2}\, A_{\omega_1})^{-1}$.
These matrices are in\-ver\-tible by definition of a branch. 

In the Bernoulli case all sequences $\underline\omega$ are admissible and we set  
\begin{align*}
\Wscr^+&:=\overline{\cup_{i\in \Asing} \cup_{n\geq 0} \left\{ A^{n-1}(\underline \omega)\, r_i,\; \underline \omega\in \Bscr_n^+(i) \right\}} ,\\
\Wscr^-&:=\overline{\cup_{i\in\Asing}\cup_{n\geq 0} \left\{ A^{-(n-1)}(\underline \omega)\, \kappa_i,\; \underline \omega\in \Bscr_n^-(i) \right\}} .
\end{align*} 
In the Markov case the definition of these sets is analogous.
The subset $\Wscr^+\subset \Ascr\times \Proj$ is defined to be the closure of the union of the ranges of compositions of the partial maps $\Amap_{\omega_l}$ along all branches departing from $i$ while the subset $\Wscr^-\subset \Ascr\times \Proj$ is the closure of the union of the pre-images of $0$ under compositions of the partial maps $\Amap_{\omega_l}$ along all branches arriving at $i$.

\begin{definition}
In the Bernoulli case, if $M$ is an invariant multi-cone for the invertible cocycle $(A_i)_{i\in \Ainv}$,   we define the sets $\Kinv^u$ and $\Kinv^s$ as in ~\cite[Subsection 2.3]{ABY10}, i.e., 
$$\Kinv^u=\bigcap_{n=0}^{\infty} \, \, \bigcup_{i_1,\cdots,i_n\in\Ainv}A_{i_n}\cdots A_{i_1}(M)$$ and 
$$\Kinv^s=\bigcap_{n=0}^{\infty} \, \,  \bigcup_{i_1,\cdots,i_n\in\Ainv}(A_{i_n}\cdots A_{i_1})^{-1}(\Pp^1\setminus\overline{M}).$$
In the Markov case $\Kinv^u$ and $\Kinv^s$ are subsets of $\Ascr\times \Proj$. Firstly, the multi-cone is to be interpreted according to Remark~\ref{Markov multi-cone remark}  and $\Pp^1\setminus\overline{M}$ replaced by
$\Ascr\times \Proj\setminus \overline{M}$. Secondly, the matrix products $A_{i_n}\cdots A_{i_1}$ are to be substituted by  composition of the maps
$\Amap_{i_n}\circ \cdots \circ \Amap_{i_1}$
along admissible invertible words.
\end{definition}

\begin{proposition}
\label{Ku=Eu, Ks=Es}
If $M$ is an invariant multi-cone for the invertible cocycle $(A_i)_{i\in \Ainv}$ then:
\begin{enumerate}
	\item $\Kinv^u$ is  the  set of  unstable Oseledets directions $E^u(\omega)$ of the cocycle
	$(A_i)_{i\in \Ainv}$ over the set of points $\omega\in \Ainv^\Z$,
	\item $\Kinv^s$ is the set of  stable Oseledets directions $E^s(\omega)$ of the cocycle
$(A_i)_{i\in \Ainv}$ over the set of  points $\omega\in\Ainv^\Z$.	
\end{enumerate}
\end{proposition}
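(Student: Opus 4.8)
The plan is to show that the set $\Kinv^u$ defined by the nested intersection of forward images of the multi-cone $M$ coincides fiberwise with the unstable Oseledets direction bundle of the invertible cocycle $(A_i)_{i\in \Ainv}$, and dually for $\Kinv^s$. I will prove statement (1); statement (2) then follows by applying (1) to the inverse cocycle (equivalently, by reversing the admissible words and replacing each matrix by its inverse), which is exactly the symmetry built into the definitions of $\Kinv^u$ and $\Kinv^s$.

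First I would recall that the invertible cocycle $(A_i)_{i\in\Ainv}$, restricted to the subshift on the sub-alphabet $\Ainv$ (admissible invertible words), is uniformly hyperbolic precisely because it admits the invariant multi-cone $M$: this is exactly the content of Theorem~\ref{ABY1} (ABY Theorem 1) together with Theorem~\ref{thm: multi-cone charact of UH}. Hence it has a well-defined, continuous Oseledets/dominated splitting $\R^2 = E^s(\omega)\oplus E^u(\omega)$ over $\omega\in\Ainv^\Z$, with uniform contraction on $E^s$ and expansion on $E^u$. Now fix $\omega\in\Ainv^\Z$ and unwind the intersection: a point $\hat v$ lies in the $\omega$-fiber of $\Kinv^u$ iff for every $n\geq 0$ it can be written as $\widehat{A_{\omega_n}\cdots A_{\omega_1} w}$ for some $\hat w\in M$ — that is, iff $(A_{\omega_n}\cdots A_{\omega_1})^{-1}\hat v \in M$ for all $n$ (interpreting $M$ in the Markov case fiberwise, as in Remark~\ref{Markov multi-cone remark}). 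Pulling $\hat v$ back by longer and longer invertible products contracts everything toward the stable direction of the backward dynamics, which is $E^u$; more precisely, since $M$ is a strictly invariant multi-cone, the backward iterates $(A^n(\omega))^{-1}M$ form a nested sequence of cones whose intersection is a single point, namely $E^u(\omega)$. This is the standard ``cone-trapping implies unique invariant direction'' argument, and it gives both inclusions: $E^u(\omega)\in \Kinv^u$ because $E^u$ is $A$-invariant and lands in $M$ after one step (here one uses $A_i M\Subset M$ so $E^u(\sigma\omega)\in M$, hence $E^u(\omega)=A_{\omega_1}^{-1}E^u(\sigma\omega)$ arises as an image of a point of $M$, and one iterates), while conversely any $\hat v$ surviving all the pullbacks must equal $E^u(\omega)$ by the diameter-shrinking estimate.

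I would make the diameter estimate quantitative using the uniform hyperbolicity constants: there is $\lambda>1$ with $\|(A^n(\omega))^{-1}|_{A^n(\omega)E^u(\sigma^n\omega)}\| \le C\lambda^{-n}$ relative to the transverse directions, so the projective diameter of $(A^n(\omega))^{-1}(\overline{M})$ tends to $0$ uniformly in $\omega$. Since $E^u(\omega)$ belongs to every $(A^n(\omega))^{-1}(M)$, it is the unique common point, which identifies it with the $\omega$-fiber of $\Kinv^u$. In the Markov setting the only change is bookkeeping: images and preimages are taken along $P$-admissible invertible words and the multi-cone is the family $(M_i)_{i\in\Ainv}$ viewed as $M=\cup_i\{i\}\times M_i$, so all the cone inclusions and the contraction estimate are applied fiber by fiber; the argument is otherwise identical. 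I expect the main obstacle to be purely expository rather than mathematical — namely carefully matching the combinatorial indexing of branches and admissible words used in the definitions of $\Kinv^u,\Kinv^s,\Wscr^\pm$ with the Oseledets directions of the sub-cocycle on $\Ainv^\Z$, and checking that the ``formal removal'' of the discontinuity at kernels plays no role here since we have restricted entirely to invertible words. Once the identification $\Kinv^u = E^u$ is in hand, statement (2) is immediate by the time-reversal symmetry, completing the proof.
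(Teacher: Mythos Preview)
Your cone-trapping argument is the right mechanism and is exactly what the paper uses, but you have misread the definition of $\Kinv^u$. It is \emph{not} a bundle over $\omega\in\Ainv^\Z$: in the Bernoulli case it is a single subset of $\Pp^1$, and in the Markov case a subset of $\Ascr\times\Pp^1$ (fibered over the finite alphabet, not over sequences). The statement to be proved is the set equality $\Kinv^u=\{E^u(\omega):\omega\in\Ainv^\Z\}$, not a fiberwise identification. Consequently your sentence ``fix $\omega\in\Ainv^\Z$ \ldots\ a point $\hat v$ lies in the $\omega$-fiber of $\Kinv^u$ iff for every $n\geq 0$ it can be written as $\widehat{A_{\omega_n}\cdots A_{\omega_1} w}$'' is not the definition: membership $\hat v\in\Kinv^u$ only says that for each $n$ there is \emph{some} length-$n$ invertible word $\underline{i}^{(n)}$ with $\hat v\in A^n(\underline{i}^{(n)})M$, and these words need not come from a single $\omega$ a priori. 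Also note the indexing: $E^u(\omega)$ depends on the \emph{past} $(\omega_{-1},\omega_{-2},\ldots)$, so the relevant products are $A_{\omega_{-1}}\cdots A_{\omega_{-n}}$, not $A_{\omega_n}\cdots A_{\omega_1}$.

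The missing step is precisely to produce such an $\omega$. Using the nesting $A^{n+1}(\underline{j})M\subset A^n(\underline{j}')M$ (drop the innermost letter) together with compactness of $\Ainv^\Z$, one extracts a single one-sided sequence $(\omega_{-1},\omega_{-2},\ldots)$ with $\hat v\in A_{\omega_{-1}}\cdots A_{\omega_{-n}}M$ for all $n$; then your diameter-shrinking estimate forces $\hat v=E^u(\omega)$. The paper's proof does exactly this, phrased as: fix any $\hat w\in M$; by dominated splitting $\hat v=\lim_n A_{\omega_{-1}}\cdots A_{\omega_{-n}}\hat w=E^u(\omega)$ for some $\omega\in\Ainv^\Z$. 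The reverse inclusion $\{E^u(\omega)\}\subset\Kinv^u$ is immediate from invariance of $E^u$ and $E^u(\sigma^{-n}\omega)\in M$, which you essentially have. Once you correct the interpretation of $\Kinv^u$ and insert the compactness extraction, your argument coincides with the paper's; your time-reversal reduction for (2) is fine.
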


\begin{proof}
Fix $\hat w\in M$. Then  by dominated splitting,  $\hat v \in \Kinv^u$ is the limit
$$ \hat v  = \lim_{n\to \infty} A_{\omega_{-1}}\, \cdots\, A_{\omega_{-n}}\, \hat w  = \lim_{n\to \infty} A^n(\sigma^{-n}\omega)\, \hat w = E^u(\omega) , $$
for some sequence $\omega\in\Ainv^\Z$. Similarly,  $\hat v \in \Kinv^s$ is the limit
$$ \hat v  = \lim_{n\to \infty} ( A_{\omega_{n-1}}\, \cdots\, A_{\omega_{0}} )^{-1} \, \hat w  = \lim_{n\to \infty} A^{-n}(\sigma^{n}\omega)\, \hat w = E^s(\omega) , $$
for some sequence $\omega\in\Ainv^\Z$.
\end{proof}

For the rest of this section all stated results hold in the Markov case but for simplicity the proofs
  will be done in the Bernoulli  context.
 The extension of these proofs to the Markov case
 are simple adaptations using the notation introduced above.


\begin{proposition}\label{reducing the multi-cone}
Let $M$ be an invariant multi-cone of the invertible cocycle
$(A_i)_{i\in \Ainv}$  and take $v\in M\setminus \Kinv^u$. Then there exists an invariant multi-cone $\tilde{M}$ of $(A_i)_{i\in \Ainv}$ such that $ \tilde M \Subset M$ and  $v\notin\tilde{M}$. 
\end{proposition}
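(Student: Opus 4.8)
The plan is to produce $\tilde M$ as a sufficiently deep forward iterate of $M$ under the invertible sub-cocycle. For $n\ge 0$ and a word $w=(i_1,\dots,i_n)\in\Ainv^n$ put $A_w:=A_{i_n}\cdots A_{i_1}$, a homeomorphism of $\Pp^1$ since the $A_i$ with $i\in\Ainv$ are invertible, and set $M_n:=\bigcup_{w\in\Ainv^n}A_w(M)$, so that $M_0=M$ and, by the definition of $\Kinv^u$ recalled above, $\Kinv^u=\bigcap_{n\ge 0}M_n$. Since $v\notin\Kinv^u$ there is an $N$ with $v\notin M_N$, and because $v\in M=M_0$ one necessarily has $N\ge 1$; the claim will be that $\tilde M:=M_N$ works.

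Then I would check the elementary nesting properties of the sets $M_n$. Each is a nonempty open subset of $\Pp^1$, being a finite union of homeomorphic images of the open set $M$. From $A_{i_1}M\Subset M$ one gets $A_w(M)\subset A_{i_n}\cdots A_{i_2}(M)\subset M_{n-1}$, hence $M_n\subset M_{n-1}\subset\cdots\subset M_0=M$; in particular $\overline{M_n}\subset\overline M\subsetneq\Pp^1$. Using that $A_{i_n}\cdots A_{i_2}$ is a homeomorphism and that $\overline{A_{i_1}M}$ is a compact subset of $M$, the same computation upgrades this to $\overline{M_{n+1}}\subset M_n$. Finally, appending a symbol to the words gives $A_i M_n\subset M_{n+1}$ for every $i\in\Ainv$, so $\overline{A_i M_n}\subset\overline{M_{n+1}}\subset M_n$, i.e. $A_i M_n\Subset M_n$. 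Thus every $M_n$ with $n\ge 1$ is an invariant multi-cone of $(A_i)_{i\in\Ainv}$, and $\overline{M_N}\subset M_{N-1}\subset M$ shows $M_N\Subset M$. Together with $v\notin M_N$ this will complete the proof. In the Markov setting the argument is verbatim the same, with unions taken over $P$-admissible invertible words and the products $A_w$ replaced by the corresponding compositions of the maps $\Amap_i$, multi-cones being interpreted as in Remark~\ref{Markov multi-cone remark}.

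There is no genuinely hard step here: the proof is bookkeeping with the forward iterates of $M$. The only points needing care are the compactness manipulations — keeping track of which inclusions are $\Subset$ rather than merely $\subset$, and invoking that the maps $A_w$ with $w$ over $\Ainv$ are homeomorphisms of $\Pp^1$ so that closure commutes with these maps — together with the observation that necessarily $N\ge 1$, which is precisely what makes $\overline{M_N}\subset M$, hence $\tilde M\Subset M$, hold.
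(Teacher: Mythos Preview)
Your proposal is correct and follows essentially the same route as the paper: define $M_n$ as the union of forward images of $M$ under all length-$n$ invertible words, check inductively that $M_{n+1}\Subset M_n\Subset\cdots\Subset M$ so each $M_n$ is an invariant multi-cone, and use $\bigcap_n M_n=\Kinv^u$ together with $v\notin\Kinv^u$ to find $N$ with $v\notin M_N$. Your write-up is in fact slightly more careful than the paper's, making explicit that $N\ge 1$ (so that $M_N\Subset M$) and spelling out the compactness step $\overline{M_{n+1}}\subset M_n$ via the homeomorphism property of the $A_w$.
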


\begin{proof}
 Define $$M_n:=\bigcup_{|\underline{\omega}|=n} A^n(\underline{\omega})M $$
 where the union is taken over all admissible  invertible  words of length $n$. We claim that $M_n$ is an invariant multi-cone of $(A_i)_{i\in \Ainv}$, for every $n\in\mathbb{N}$ and that there exists a sufficiently large $N\in\mathbb{N}$ such that $v\notin M_N$. Let us prove, by induction, that $M_{n+1}\Subset M_n\Subset\cdots \Subset M$. 
Since $M$ is an invariant multi-cone associated to $(A_i)_{i\in \Ainv}$, then $A(\underline{\omega})M\Subset M$ for every invertible word $\underline{\omega}$ such that $|\underline{\omega}|=1$. Thus $\bigcup_{|\underline{\omega}|=1}A(\underline{\omega})M\Subset M$ as the union is finite. Then
\begin{align*}
M_{n+1}&= \bigcup_{|\underline{\omega}|=n+1}A^{n+1}(\underline{\omega})M = \bigcup_{|\underline{z}|=1}A(\underline{z})\bigcup_{|\underline{\omega}|=n}A^{n}(\underline{\omega})M\\
&= \bigcup_{|\underline{z}|=1}A(\underline{z})M_n \Subset M_n.
\end{align*}
In particular, as $M\neq \Pp^1$ we have that $M_n\neq \Pp^1 \ \forall n\in\mathbb{N}$. Notice that as $M$ is open and the cocycle $(A_i)_{i\in \Ainv}$ is invertible, then $M_n$ is open for every $n\in\mathbb{N}$. Therefore $M_n$ is an invariant multi-cone $\forall n\in\mathbb{N}.$ To prove that
for any $v\in M\setminus \Kinv^u $ there exists $n\in\mathbb{N}$ such that $v\notin M_N$ we simply notice that $\Kinv^u$ is closed and that $$\lim_n M_n := \lim_n \bigcup_{|\underline{\omega}|=n}A^{n}(\underline{\omega})M = \bigcap_{n=0}^{\infty} \bigcup_{|\underline{\omega}|=n}A^{n}(\underline{\omega})M =: \Kinv^u ,$$ 
because $M_n$ is a monotonous sequence.
\end{proof}

\begin{proposition} \label{PropEmptyInt}
Consider  a cocycle $\uA\in \Mat_2(\R)^{\mathscr{A}}$ of rank $1$ such that $\uA_{\rm inv}:=(A_i)_{i\in \Ainv}$ is projectively uniformly hyperbolic, $\underline{A}$ is not diagonalizable and $ \Wscr^+\cap \Wscr^-=\emptyset $. Then
$$
\Kscr(\uA) \cap \Kinv^u = \emptyset \quad \text{and} \quad  \Rscr(\uA) \cap \Kinv^s = \emptyset.
$$
\end{proposition}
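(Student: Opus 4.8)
The plan is to reduce the two assertions to the single pair of inclusions $\Kinv^u\subseteq\Wscr^+$ and $\Kinv^s\subseteq\Wscr^-$, and to deduce these from the hypothesis $\Wscr^+\cap\Wscr^-=\emptyset$. As in the rest of the section I would argue in the Bernoulli case. First I would record the elementary facts: by Proposition~\ref{Ku=Eu, Ks=Es} the sets $\Kinv^u$, $\Kinv^s$ are exactly the unstable, resp. stable, Oseledets directions of $\uA_{\rm inv}$ over $\Ainv^{\Z}$ (which I take nonempty, the case $\Ainv=\emptyset$ being vacuous); fixing a ``thin'' invariant multi-cone $M$ of $\uA_{\rm inv}$ with $\Kinv^u\subseteq M$ and $\overline{M}\cap\Kinv^s=\emptyset$, these are disjoint closed subsets of $\Proj$; the length-zero branches in the definitions of $\Wscr^{\pm}$ give $\Rscr(\uA)\subseteq\Wscr^+$ and $\Kscr(\uA)\subseteq\Wscr^-$; and $\Wscr^+$ is a nonempty closed subset of $\Proj$ invariant under every $\Amap_i$, $i\in\Ainv$ (concatenate a branch with one more invertible letter), while $\Wscr^-$ is nonempty, closed and invariant under every $\Amap_i^{-1}$. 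Granting these, $\Wscr^+\cap\Wscr^-=\emptyset$ together with $\Kinv^u\subseteq\Wscr^+$ and $\Kinv^s\subseteq\Wscr^-$ immediately yields $\Kscr(\uA)\cap\Kinv^u\subseteq\Wscr^-\cap\Wscr^+=\emptyset$ and $\Rscr(\uA)\cap\Kinv^s\subseteq\Wscr^+\cap\Wscr^-=\emptyset$.

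Next I would prove a \emph{filling lemma}: if $\Wscr^+$ is not contained in $\Kinv^s$, then $\Kinv^u\subseteq\Wscr^+$ (and symmetrically, if $\Wscr^-\not\subseteq\Kinv^u$ then $\Kinv^s\subseteq\Wscr^-$). Pick $p\in\Wscr^+\setminus\Kinv^s$; since $\Kinv^s=\bigcap_n\bigcup_{|\underline\omega|=n}(A^n(\underline\omega))^{-1}(\Proj\setminus\overline{M})$ is a decreasing intersection, after some finite time every invertible pushforward of $p$ lands in $\overline{M}$, hence (one more letter) in $M$; running the dominated cocycle backward along an arbitrary $\omega\in\Ainv^{\Z}$ sends any point of $M$ to $E^u(\omega)$, and all the intermediate points remain in the closed $\Amap_i$-invariant set $\Wscr^+$, so $E^u(\omega)\in\Wscr^+$. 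Letting $\omega$ vary gives $\Kinv^u\subseteq\Wscr^+$. Now suppose one of the two desired inclusions fails, say $\Kinv^u\not\subseteq\Wscr^+$. By the lemma $\Wscr^+\subseteq\Kinv^s$, and since $\emptyset\neq\Rscr(\uA)\subseteq\Wscr^+$ we cannot have $\Kinv^s\subseteq\Wscr^-$ (that would give $\emptyset\neq\Wscr^+\subseteq\Kinv^s\subseteq\Wscr^-$, against disjointness); so $\Kinv^s\not\subseteq\Wscr^-$, and the mirror lemma gives $\Wscr^-\subseteq\Kinv^u$. The same conclusion follows symmetrically if instead $\Kinv^s\not\subseteq\Wscr^-$. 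Thus we are reduced to ruling out the configuration $\Wscr^+\subseteq\Kinv^s$ and $\Wscr^-\subseteq\Kinv^u$.

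This last step is the conceptual heart, and it is where the hypothesis that $\uA$ is not diagonalizable enters. I would split cases on $\uA_{\rm inv}$. If $\uA_{\rm inv}$ is conjugate to a diagonal cocycle, its dominated splitting is constant along the two eigendirections, so $\Kinv^u$ and $\Kinv^s$ are the two singletons $\{\langle e_1\rangle\}$, $\{\langle e_2\rangle\}$; then $\Rscr(\uA)\subseteq\Kinv^s$ and $\Kscr(\uA)\subseteq\Kinv^u$ force $\Range(A_i)=\langle e_2\rangle$ and $\Ker(A_i)=\langle e_1\rangle$ for every $i\in\Asing$, so every $A_i$ (singular or invertible) is diagonal in the basis $\{e_1,e_2\}$, contradicting that $\uA$ is not diagonalizable. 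If $\uA_{\rm inv}$ is not conjugate to a diagonal cocycle, then — being projectively uniformly hyperbolic over a topologically mixing subshift — its action on $\Proj$ by $\{\Amap_i : i\in\Ainv\}$ along admissible invertible words is irreducible and proximal, hence admits $\Kinv^u$ as its unique minimal invariant set, so every nonempty closed forward‑invariant subset of $\Proj$ contains $\Kinv^u$; applying this to $\Wscr^+$ gives $\Kinv^u\subseteq\Wscr^+\subseteq\Kinv^s$, contradicting $\Kinv^u\cap\Kinv^s=\emptyset$ (and $\Kinv^u\neq\emptyset$). Either way we reach a contradiction, so $\Kinv^u\subseteq\Wscr^+$ and, symmetrically, $\Kinv^s\subseteq\Wscr^-$, which completes the proof.

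The main obstacle I anticipate is precisely this dichotomy: pinning down how ``$\uA$ not diagonalizable'' excludes the degenerate configuration, and cleanly importing (or reproving in the locally constant setting over a mixing subshift) the uniqueness of the minimal invariant set on $\Proj$ for a non‑diagonalizable projectively hyperbolic cocycle. A secondary point to check is that the admissibility manipulations in the filling lemma — concatenating branches departing from a singular symbol with arbitrary invertible orbit segments — hold verbatim in the Bernoulli case, and carry over to the Markov case with the usual primitivity detours that leave the limiting directions unchanged.
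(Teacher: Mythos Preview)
Your overall strategy---reduce the statement to the pair of inclusions $\Kinv^u\subseteq\Wscr^+$ and $\Kinv^s\subseteq\Wscr^-$, prove a filling lemma, and then rule out the residual ``degenerate'' configuration $\Wscr^+\subseteq\Kinv^s$, $\Wscr^-\subseteq\Kinv^u$---is clean and works. The filling lemma and the reduction are correct. The paper argues differently: it fixes $\hat k\in\Kscr(\uA)\cap\Kinv^u$ and builds explicit words exhibiting $\Wscr^+\cap\Wscr^-\neq\emptyset$, splitting cases according to which of $\Kinv^u,\Kinv^s$ has more than one element (Lemma~\ref{InvElemKu}) and, when $\uA_{\rm inv}$ is diagonalizable, using Lemma~\ref{Ks=1Ku=1}. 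Your route is more conceptual; the paper's is more hands-on but avoids any appeal to minimality on $\Proj$.

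There is, however, a genuine gap at the step you yourself flag. From ``$\uA_{\rm inv}$ is not diagonalizable'' you \emph{cannot} conclude that the semigroup action of $\{\Amap_i:i\in\Ainv\}$ on $\Proj$ is irreducible. Non-diagonalizability of $\uA_{\rm inv}$ only says there is no pair of common invariant lines; there may still be a single one. Concretely, if $\#\Kinv^s=1$ and $\#\Kinv^u>1$ (which happens, e.g., for upper-triangular hyperbolic $A_i$ sharing the stable direction $\langle e_2\rangle$ but with distinct unstable directions), then $\Kinv^s$ is a common fixed point, the action is reducible, and $\Kinv^s$ itself is a nonempty closed forward-invariant set \emph{not} containing $\Kinv^u$. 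So your ``unique minimal set'' claim fails precisely in this sub-case, and the degenerate configuration $\Wscr^+=\{\hat s\}=\Kinv^s$, $\Wscr^-\subseteq\Kinv^u$ is not yet excluded.

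The fix is short and in the spirit of the paper. Show: if $\#\Kinv^u>1$ then no nonempty closed \emph{backward}-invariant set $B$ can sit inside $\Kinv^u$. Indeed, $A_i^{-1}B\subseteq B$ for all $i\in\Ainv$ gives $B\subseteq A_{i_1}\cdots A_{i_n}B\subseteq A_{i_1}\cdots A_{i_n}M$ for every word $(i_1,\ldots,i_n)$; since these cylinders have diameters tending to $0$ and their nested intersections, as the word varies, cover $\Kinv^u$, any $b\in B$ would equal every point of $\Kinv^u$, forcing $\#\Kinv^u=1$. Applying this to $B=\Wscr^-$ rules out $\Wscr^-\subseteq\Kinv^u$ when $\#\Kinv^u>1$; symmetrically, $\#\Kinv^s>1$ rules out $\Wscr^+\subseteq\Kinv^s$. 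Since non-diagonalizability of $\uA_{\rm inv}$ forces $\#\Kinv^u>1$ or $\#\Kinv^s>1$ (this is exactly Lemma~\ref{InvElemKu}), the degenerate configuration is impossible in that case, and your argument goes through. With this patch your proof is correct and arguably tidier than the paper's case analysis.
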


\begin{proof}
We are only going to prove that  $\Kscr(\uA)\cap \Kinv^u = \emptyset$. The other proof is analogous. Suppose by contradiction that there exists $\hat k \in \Kscr(\uA)\cap \Kinv^u$. We split the proof into two cases:\\
\blob\; $\uA_{\rm inv}$ is not diagonalizable, and \\
\blob\;  $\uA_{\rm inv}$ is diagonalizable but $\uA$ is not.

Let us start with the assumption that $\uA_{\rm inv}$ is not diagonalizable.
We will say that $\hat v\in\Proj$ is $\uA_{{\rm inv}}$-invariant 
if $ A_i\, \hat v=\hat v$ for all $i\in \Ainv$.

\begin{lemma} \label{InvElemKu}
There exists an $\uA_{{\rm inv}}$-invariant element in $\Kinv^{u}$ if and only if $\# 	\Kinv^{u} = 1$. Analogously, there exists an $\uA_{{\rm inv}}$-invariant element in $\Kinv^{s}$ if and only if $\# \Kinv^{s} = 1$.
\end{lemma}

\begin{proof}
Suppose $\#\Kinv^u = 1$. By Proposition~\ref{Ku=Eu, Ks=Es}, there exists $\hat v \in \Kinv^u$ such that for every $\omega \in X$,  
$E^u(\omega) = \hat v$. Moreover, by the invariance of the Oseledets subspaces, for almost every $\omega \in X$, $E^u(f(\omega)) = A(\omega)E^u(\omega)$. Hence, $\hat v = \hat A(\omega)\hat v$ for almost every $\omega \in X$.

Conversely, if $A_i\, \hat v=\hat v\in \Kinv^u$ 
for all $i\in \Ainv$, since $\hat v\in \Kinv^u\subset M$, 
$$ \hat v  = \lim_{n\to \infty} A_{\omega_{-1}}\, \cdots\, A_{\omega_{-n}}\, \hat v  = \lim_{n\to \infty} A^n(\sigma^{-n}\omega)\, \hat v = E^u(\omega) , $$
and $E^u(\omega)=\hat v$ for all $\omega\in \Ainv^\Z$.
Thus by Proposition~\ref{Ku=Eu, Ks=Es} $\Kinv^u =\{\hat v\}$.

The conclusion for $\Kinv^s$ follows  under a similar argument.
\end{proof}

By Lemma~ \ref{InvElemKu},   $\#\Kinv^u >1$ or $\#\Kinv^s>1$,
for otherwise $\uA_{\rm inv}$ would be diagonalizable. We treat each of these cases separately. 

$\blob$ First   assume that $\#\Kinv^u>1$.

If there exists $\hat r \in \Rscr(\uA)$ such that $\hat r \notin \Kinv^s$, then for every $\varepsilon > 0$, we can choose $\omega \in \Ainv^\Z$  and $n\in\N$ such that $d( A^n(\omega)\, \hat r, \hat k) < \varepsilon$. This contradicts  the fact that $ \Wscr^+\cap \Wscr^-=\emptyset $.
		
 On the other hand, if $\hat r \in \Rscr(\uA) \cap \Kinv^s$, since $\Kinv^u$ has at least two elements one of which is $\hat k\in \Kinv^u$, by Lemma~\ref{InvElemKu} there exists $i\in\Ainv$ such that
$A_i^{-1}\hat k\neq \hat k$. Hence there exists a word    $\omega \in \Ainv^\Z$ and $n_0\in\N$ such that $A^{-n_0}(\omega)\, \hat k \notin \Kinv^u$. Choosing the coordinates of $\omega$ appropriately we can force the convergence of  $A^{-n}(\omega)\, \hat k$    to $\hat r \in \Kinv^s$, which also contradicts the fact that $ \Wscr^+\cap \Wscr^-=\emptyset $. 

\smallskip
	
$\blob$ Now assume $\#\Kinv^s > 1$. 

Suppose that there exists  $\hat r \in\Rscr(\uA)$ such that $\hat r \notin \Kinv^s$. Then it is possible to iterate $\hat r$ forward by a suitable invertible word in a way that it converges  to $\hat k \in \Kinv^u$. This contradicts $ \Wscr^+\cap \Wscr^-=\emptyset $. 

 If, on the other hand, there exists $\hat  r \in \Rscr(\uA)\cap \Kinv^s$, then by Lemma~\ref{InvElemKu} there is $i\in\Ainv$ such that
		$A_i\hat r\neq \hat r$. Hence there exists a word    $\omega \in \Ainv^\Z$ and $n_0\in\N$ such that
		$A^{n_0}(\omega)\, \hat r \notin \Kinv^s$. 
		Finally, choosing the coordinates of $\omega$ appropriately we can force the convergence of  $A^{n}(\omega)\, \hat r$    to $\hat k \in \Kinv^u$, which also contradicts the fact that $ \Wscr^+\cap \Wscr^-=\emptyset $.

\medskip

We now proceed to the case that $\uA_{\rm inv}$ is diagonalizable but $\uA$ is not. Since $\uA_{\rm inv}$ is projectively uniformly hyperbolic and $\uA_{\rm inv}$ is diagonalizable, there are exactly two invariant directions,   $\Kinv^s$ and $\Kinv^u$.

\begin{lemma} \label{Ks=1Ku=1}
Suppose $\uA_{\rm inv}$  is diagonalizable but $\uA$ is not. 
Then either
\begin{itemize}
\item[(i)] there exists $ \hat r \in \Rscr(\uA) \; \text{such that }\; \hat r \notin \Kinv^s \cup \Kinv^u $, or else
\item[(ii)] there exists $ \hat  k \in \Kscr(\uA) \; \text{such that }\; \hat k \notin \Kinv^s \cup \Kinv^u$.
\end{itemize}

\end{lemma}

\begin{proof}
Since  $\uA_{\rm inv}$  is diagonalizable, $\Kinv^u=\{\hat e^u\}$ and $\Kinv^s=\{\hat e^s\}$ are singletons  where 
$\hat e^u$ and $\hat e^s$ are respectively the unstable and stable directions of Oseledets. 
If (i) and (ii) were both false, then for every $i\in \Asing$,
the matrix $A_i$ would preserve both directions 
$\hat e^u$ and $\hat e^s$, which would imply that  $\uA$ is diagonalizable.
\end{proof}

Next we analyze the two cases given by Lemma~\ref{Ks=1Ku=1}.

\medskip

If  there exists $\hat r \in \Rscr(\uA)$ such that $\hat r \notin \Kinv^s$ then iterating $\hat r$ by any invertible word, it converges to $\Kinv^u =\{\hat k\}$. This contradicts $ \Wscr^+\cap \Wscr^-=\emptyset $.
Otherwise, every $\hat r\in\Rscr(\uA)$ satisfies $\Kinv^s=\{\hat r\}$ and  there exists $ \hat k' \in \Kscr(\uA)$   such that $\hat k' \notin \Kinv^s \cup \Kinv^u$. 
 Hence iterating $\hat k'$ backwards by any invertible word, it converges to $\hat r = \Kinv^s$, which contradicts $ \Wscr^+\cap \Wscr^-=\emptyset $.
 This concludes the proof.
\end{proof}

\begin{remark} \label{rmk prop puh}
Note that in   Proposition~\ref{PropEmptyInt},   assumption $\Wscr^+\cap \Wscr^-=\emptyset$ can be replaced by $\Wscr^+\cap \Kscr(\uA)=\emptyset$ and $\Rscr(\uA)\cap \Wscr^-=\emptyset$.
\end{remark}

\medskip

The following characterization of PUH provides a technical step in the proof of Theorem~\ref{thm: BM type dichotomy}.
A result in the same direction is proved in the  preprint~\cite{CSZZ-25}.

\begin{theorem} 
	\label{thm2: branch charact of UH}
	Given  a random cocycle $\uA\in \Mat_2(\R)^{\mathscr{A}}$ of rank $1$ such that $\uA_{\rm inv}:=(A_i)_{i\in \Ainv}$ is projectively uniformly hyperbolic and $\uA$ is not diagonalizable, the following are  equivalent: 
	\begin{enumerate}
		\item $\uA$ is projectively uniformly hyperbolic,
		\item $\Wscr^+\cap \Wscr^-=\emptyset$,
		\item $\Wscr^+\cap \Kscr(\uA)=\emptyset$ and $\Rscr(\uA)\cap \Wscr^-=\emptyset$.
	\end{enumerate}
\end{theorem}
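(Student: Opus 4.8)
The plan is to prove the cycle of implications $(1)\Rightarrow(2)\Rightarrow(3)\Rightarrow(1)$, using the desingularization machinery (Lemma~\ref{desingularization lemma}), the ABY theorems for $\SL_2'(\R)$, and the structural propositions just established (Propositions~\ref{Ku=Eu, Ks=Es}, \ref{reducing the multi-cone}, \ref{PropEmptyInt}), together with Remark~\ref{rmk prop puh}. Note first that the equivalence $(2)\Leftrightarrow(3)$ is almost formal: by construction $\Wscr^+$ is the closure of forward orbits of points in $\Rscr(\uA)$ under the invertible maps, and $\Kscr(\uA)$, $\Rscr(\uA)$ are themselves among the seeds generating $\Wscr^+$, $\Wscr^-$ (taking the branch of length $0$). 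So $\Wscr^+\cap\Wscr^-=\emptyset$ trivially forces $\Wscr^+\cap\Kscr(\uA)=\emptyset$ and $\Rscr(\uA)\cap\Wscr^-=\emptyset$; conversely, if a point lay in $\Wscr^+\cap\Wscr^-$ one can push it forward (resp.\ pull it back) along invertible words to land near $\Kscr(\uA)$ (resp.\ $\Rscr(\uA)$), using that $\uA_{\rm inv}$ is uniformly hyperbolic so forward iterates of any non-stable direction accumulate on $\Kinv^u$ and backward iterates of any non-unstable direction accumulate on $\Kinv^s$. The implication $(2)\Rightarrow(1)$ (equivalently $(3)\Rightarrow(1)$) is precisely the content of Proposition~\ref{PropEmptyInt} combined with Remark~\ref{rmk prop puh}: under the hypotheses, $\Kscr(\uA)\cap\Kinv^u=\emptyset$ and $\Rscr(\uA)\cap\Kinv^s=\emptyset$, and I will use this to build a genuine invariant multi-cone for the full cocycle $\uA$, then invoke Theorem~\ref{thm: multi-cone charact of UH} to conclude $\uA$ is PUH.

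The construction of that multi-cone is the heart of the argument. Starting from an invariant multi-cone $M$ for $\uA_{\rm inv}$, I would first use Proposition~\ref{reducing the multi-cone} repeatedly (or rather, pass to $M_n=\bigcup_{|\uomega|=n}A^n(\uomega)M$ for $n$ large) to shrink $M$ so that it avoids every kernel direction $k_i$, $i\in\Asing$ — possible exactly because $\Kscr(\uA)\cap\Kinv^u=\emptyset$ and $\Kinv^u=\bigcap_n M_n$ is closed, so finitely many applications of the shrinking suffice to exclude the finitely many points of $\Kscr(\uA)$. This gives a multi-cone $M'$ for $\uA_{\rm inv}$ with $\overline{M'}\subset \Ascr\times\Pp^1\setminus\Kscr(\uA)$, so the singular maps $\Amap_i$ are defined and continuous on $\overline{M'}$ and send it to the points of $\Rscr(\uA)$. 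Dually, using $\Rscr(\uA)\cap\Kinv^s=\emptyset$ one arranges that $\Rscr(\uA)$ lies in the interior of $M'$ (after possibly enlarging slightly, or choosing $M$ appropriately from the start), so that $\Amap_i\overline{M'}\subset M'$ for all $i\in\Asing$ as well as $A_iM'\Subset M'$ for $i\in\Ainv$. Then $M'$ is a bona fide invariant multi-cone for the whole tuple $\uA$, and $(1)$ follows.

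For $(1)\Rightarrow(2)$: suppose $\uA$ is PUH but $\Wscr^+\cap\Wscr^-\neq\emptyset$. By Theorem~\ref{thm: multi-cone charact of UH}, $\uA$ admits an invariant multi-cone $M$; then in particular $M$ is invariant for $\uA_{\rm inv}$, so $\Kinv^u\subset M$ and $\Kinv^s\subset\Pp^1\setminus\overline M$ (in the Markov case, fiberwise). The sets $\Wscr^+$ and $\Wscr^-$ satisfy $\Wscr^+\subset\overline M$ (since $\Rscr(\uA)\subset M$ by invariance, $A_iM\Subset M$ for $i\in\Asing$ meaning $\Amap_i\overline M\subset M$, wait — one must check $\Rscr(\uA)\subset M$: indeed for $i\in\Asing$, $\Amap_i$ maps the multi-cone into itself, and its image is the single point $r_i$, so $r_i\in M$) while $\Wscr^-\subset\overline{\Pp^1\setminus M}=\Pp^1\setminus M$ (the backward branches land in the complement, as the relevant pre-image directions are kernels $k_i$ which, by $(3)$-type reasoning forced from the multi-cone, lie outside $\overline M$; more directly, if $A_iM\Subset M$ then $A_i^{-1}(\Pp^1\setminus\overline M)\supset \Pp^1\setminus M \ni k_i$, so $k_i\notin M$). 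Since $\overline M$ and its complement's closure meet only in $\partial M$, one needs the slightly finer statement that $\Wscr^+\subset M$ (open) and $\Wscr^-\subset\Pp^1\setminus\overline M$, which follows from the strict inclusions $A_iM\Subset M$; hence $\Wscr^+\cap\Wscr^-=\emptyset$.

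\medskip

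\begin{proof}[Proof sketch / plan]
We prove $(1)\Rightarrow(2)\Rightarrow(3)\Rightarrow(1)$; the implication $(2)\Rightarrow(3)$ is immediate since $\Kscr(\uA)\subset\Wscr^-$ and $\Rscr(\uA)\subset\Wscr^+$ (take branches of length zero), and conversely an element of $\Wscr^+\cap\Wscr^-$ can be pushed forward (resp.\ backward) along invertible words to accumulate on $\Kscr(\uA)$ (resp.\ $\Rscr(\uA)$), using the uniform hyperbolicity of $\uA_{\rm inv}$; this yields $(3)\Rightarrow(2)$.

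$(1)\Rightarrow(2)$: If $\uA$ is PUH, by Theorem~\ref{thm: multi-cone charact of UH} it admits an invariant multi-cone $M$. Restricting to $\Ainv$ shows $M$ is invariant for $\uA_{\rm inv}$, hence $\Kinv^u\subset M$ and $\Kinv^s\subset\Pp^1\setminus\overline M$ by Proposition~\ref{Ku=Eu, Ks=Es}. Since for $i\in\Asing$ the map $\Amap_i$ sends $M$ into $M$ with image the single point $r_i$, we get $\Rscr(\uA)\subset M$; iterating forward by invertible words (which satisfy $A_iM\Subset M$) keeps everything in $M$, so $\Wscr^+\subset M$ (using the strict inclusions to absorb the closure). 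Dually $\Wscr^-\subset\Pp^1\setminus\overline M$. Hence $\Wscr^+\cap\Wscr^-=\emptyset$.

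$(3)\Rightarrow(1)$: Under hypothesis $(3)$, Proposition~\ref{PropEmptyInt} together with Remark~\ref{rmk prop puh} gives $\Kscr(\uA)\cap\Kinv^u=\emptyset$ and $\Rscr(\uA)\cap\Kinv^s=\emptyset$. Starting from an invariant multi-cone $M$ for $\uA_{\rm inv}$ (which exists by Theorem~\ref{ABY1} applied to the PUH cocycle $\uA_{\rm inv}$), form $M_n=\bigcup_{|\uomega|=n}A^n(\uomega)M$ as in Proposition~\ref{reducing the multi-cone}; these are nested invariant multi-cones with $\bigcap_n M_n=\Kinv^u$. Since $\Kscr(\uA)$ is finite and disjoint from the closed set $\Kinv^u$, for $n$ large $M_n$ avoids $\Kscr(\uA)$, so all $\Amap_i$, $i\in\Asing$, are defined and continuous on $\overline{M_n}$. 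Moreover their images $r_i$ lie in $\Kinv^u\subset M_n$ (indeed $r_i\in\Rscr(\uA)$, and using $\Rscr(\uA)\cap\Kinv^s=\emptyset$ one checks $\Rscr(\uA)\subset M_n$ for $n$ large by pushing forward). Thus $M':=M_n$ satisfies $A_iM'\Subset M'$ for $i\in\Ainv$ and $\Amap_i\overline{M'}\subset M'$ for $i\in\Asing$, i.e.\ $M'$ is an invariant multi-cone for the full cocycle $\uA$. By Theorem~\ref{thm: multi-cone charact of UH}, $\uA$ is PUH.

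The main obstacle is the construction in $(3)\Rightarrow(1)$: one must simultaneously separate the multi-cone from all kernel directions $\Kscr(\uA)$ \emph{and} keep all range directions $\Rscr(\uA)$ strictly inside it, and verify the strict inclusions $\Amap_i\overline{M'}\subset M'$ survive the shrinking. This is where the two conclusions of Proposition~\ref{PropEmptyInt} — one about $\Kinv^u$, one about $\Kinv^s$ — are both needed, and where the Markov bookkeeping (fiberwise multi-cones, admissible branches) makes the argument notationally heavier, though conceptually identical to the Bernoulli case treated above.
\end{proof}
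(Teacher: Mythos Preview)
Your overall cycle $(1)\Rightarrow(2)\Rightarrow(3)\Rightarrow(1)$ and the use of Proposition~\ref{PropEmptyInt} and Remark~\ref{rmk prop puh} are in line with the paper's proof, and your argument for $(1)\Rightarrow(2)$ via a multi-cone separation of $\Wscr^+$ and $\Wscr^-$ is a correct and somewhat more direct alternative to the paper's perturbation/openness argument. The trivial implication $(2)\Rightarrow(3)$ is also fine.

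However, your construction in $(3)\Rightarrow(1)$ has a genuine gap. You shrink the invariant multi-cone for $\uA_{\rm inv}$ to some $M_n$ with $\Kscr(\uA)\cap\overline{M_n}=\emptyset$ (correctly, using $\Kscr(\uA)\cap\Kinv^u=\emptyset$), and then assert that ``their images $r_i$ lie in $\Kinv^u\subset M_n$'' and ``$\Rscr(\uA)\subset M_n$ for $n$ large''. Both claims are false in general. The ranges $r_i$ have no reason to belong to $\Kinv^u$; all that Proposition~\ref{PropEmptyInt} gives is $\Rscr(\uA)\cap\Kinv^s=\emptyset$. Since $M_n$ \emph{decreases} to $\Kinv^u$, demanding $\Rscr(\uA)\subset M_n$ for large $n$ would force $\Rscr(\uA)\subset\Kinv^u$, which typically fails. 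Your parenthetical ``by pushing forward'' does not help: pushing forward brings \emph{iterates} of $r_i$ into $M_n$, not $r_i$ itself, so the resulting $M_n$ is not invariant for the singular maps.

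The paper resolves this by \emph{enlarging} the shrunk cone rather than shrinking further: having fixed $M$ (already shrunk to avoid kernels), one uses $\Rscr(\uA)\cap\Kinv^s=\emptyset$ and uniform hyperbolicity of $\uA_{\rm inv}$ to find $N$ with $A^N(\omega)\hat r\in M$ for every invertible word $\omega$ and every $\hat r\in\Rscr(\uA)$. One then adjoins to $M$ a finite chain of small balls $B_{\epsilon_j}(A^j(\omega)\hat r)$, $0\le j\le N-1$, with nested images, so that each singular map lands in one of these balls and the invertible maps carry the chain into $M$ after $N$ steps. Condition $(3)$ --- specifically $\Wscr^+\cap\Kscr(\uA)=\emptyset$ --- is used \emph{here}, to guarantee that these added balls (centered on points of $\Wscr^+$) can be chosen disjoint from $\Kscr(\uA)$, so that the singular maps are still defined and strict on the enlarged set. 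This ``bridge of balls'' is the missing idea in your sketch; without it, you do not obtain an invariant multi-cone for the full cocycle $\uA$.
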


\begin{proof}[Proof of Theorem~\ref{thm2: branch charact of UH}]
(1) $\Rightarrow$ (2): If $\uA$ is projectively uniformly hyperbolic, then all the statements in Theorem \ref{thm: multi-cone charact of UH} hold. Since these are open properties, meaning that they hold under arbitrarily small perturbations of the cocycle,  $\Wscr^+\cap \Wscr^-=\emptyset$, for otherwise we could produce null words under arbitrarily small perturbations of the cocycle's matrices. This would imply the loss of the projective uniform hyperbolicity, leading to a contradiction, as the projective uniform hyperbolicity is an open property.

\medskip

Note that (2) $\Rightarrow$ (3) follows from Proposition~\ref{PropEmptyInt}. 

\bigskip

(3) $\Rightarrow$ (1): 
We know that $\uA_{\rm inv}$ is projectively uniformly hyperbolic so by Theorem \ref{thm: multi-cone charact of UH}, there is an invariant multi-cone $M$ associated to $\uA_{\rm inv}$. Note that by Proposition \ref{PropEmptyInt} and Remark \ref{rmk prop puh} there are no ranges of singular matrices in $\Kinv^s$ and there are no kernels of singular matrices in $\Kinv^u$. Also, by Proposition \ref{reducing the multi-cone}, we can shrink the multi-cone $M$ so that it does not contain kernels of  singular matrices $A_i$, $i\in \Asing$. Now, because $\uA_{\rm inv}$ is projectively uniformly hyperbolic, there exists $N\in\N$ such that for every $\omega\in\Ainv^\Z$ and every range $\hat r\in\Rscr(\uA)$, which as we have previously seen is not in $\Kinv^s$, we have $A^N(\omega)\, \hat r\in M$. Thus, because there are only finitely many ranges in $\Rscr(\uA)$ and finitely many words of length $N$, using (3), we can find sufficiently small numbers $$0<\epsilon_1 < \cdots < \epsilon_{N-1}$$ independent of the words $\omega$ and of the ranges $\hat r$ such that the following inclusions of balls in the projective space hold:

$$  
A_{\omega_N} B_{\epsilon_{N-1}}(A^{N-1}(\omega)\, \hat r) \Subset M 
$$
and
$$A_{\omega_j} B_{\epsilon_j}(A^j(\omega)\, \hat r)\Subset B_{\epsilon_{j+1}}(A^{j+1}(\omega)\, \hat r)) ,$$ for every $1\leq j\leq N-1$.
The union of $M$ with all these balls, for every word $\omega\in \Ainv^N$ and every range $\hat r\in \Rscr(\uA)$  is an invariant multi-cone associated to $\uA$.
\end{proof}	
	
\begin{remark} \label{RmkNonDiag}
If $\uA_{\rm inv}$ is projectively uniformly hyperbolic and the cocycle $\uA$ is diagonalizable with 
$$\Kinv^u=\Rscr(\uA)=\{\hat r\}\; \text{ and }\; \Kinv^s=\Kscr(\uA)=\{\hat k\},$$
 then the equivalences in  Theorem \ref{thm2: branch charact of UH} hold, i.e.,   (1), (2) and (3) are all true.
  The non diagonalizable hypothesis aims to exclude the case when \begin{equation}
  	\label{diag relations}
  	\Kinv^u=\Kscr(\uA)=\{\hat k\}\, \text{ and }\;  \Kinv^s=\Rscr(\uA)=\{\hat r\} .
  \end{equation}
  In Example~\ref{explo 1} the assumptions of this theorem hold, except for the fact that $\uA$ is diagonalizable and satisfies~\eqref{diag relations}. In this example conditions (1) and (3) of Theorem \ref{thm2: branch charact of UH} fail while condition (2) holds true.
\end{remark}	

\medskip

\section{Continuity dichotomy}\label{dichotomy}
In this section we prove a Ma\~{n}\'e-Bochi theorem for non-invertible $\Matdm$-valued cocycles. As a corollary, we obtain a dichotomy between analyticity and discontinuity for the Lyapunov exponents.

Let $\uA:\Lambda \to \Matdm^k$ be a smooth family on some open set $\Lambda\subset \R^n$.

\begin{definition}
\label{def pos winding}
When $\Lambda\subset \R$ is an open interval we say that  
$\uA(t)$ is \textit{positively winding} if there exists $c>0$ such that for all $j\in \Ainv$,
\begin{itemize}
	\item[$\blob$] all $t\in \Lambda$ and  $v\in \Proj$,
\begin{equation}
	\label{winding speed}
 \frac{(A_j(t)\, v)\wedge (\frac{d}{dt} A_j(t)\, v)}{\norm{A_j(t) \, v}^2} = \frac{A_j(t)\, v}{\norm{A_j(t) \, v}} \wedge \frac{d}{dt} \frac{A_j(t)\, v}{\norm{A_j(t) \, v}} \geq c ,
\end{equation}
\end{itemize}	
and moreover, for all $i\in \Asing$ and all $t\in \Lambda$, 
\begin{itemize}
	\item[$\blob$] $\rank(A_i(t))=1$,  
	
	\item[$\blob$] $r_i(t)\wedge \frac{d}{dt} r_i(t) \geq 0$, with $r_i:=\mathrm{range}(A_i)$,
	
	\item[$\blob$]  $k_i(t)\wedge \frac{d}{dt} k_i(t) \leq 0$, with $k_i:=\ker(A_i)$.
\end{itemize}
Analogously, we say that $\uA(t)$ is \textit{negatively winding} if it satisfies similar conditions  with all inequalities reversed, including $c<0$.
\end{definition}

\begin{remark}
\label{rmk pos winding examples}
Let
$R_t:=\begin{bmatrix}
	\cos t & -\sin t \\ \sin t & \cos t 
\end{bmatrix}$. Given $\uA=(A_1, \ldots, A_k) \in \Matdm^k$ the families $\uA:\R\to \Matdm^k$ below are positively winding:
\begin{itemize}
	\item[$\blob$] $\uA(t)$ where $A_i(t):=R_t\, A_i$ for all $i\in \Ascr$;
	\item[$\blob$] $\uA(t)$ where $A_i(t):= A_i\, R_t$ for all $i\in \Ascr$;
	\item[$\blob$] $\uA(t)$ where $A_i(t):=\begin{cases}
		R_t\, A_i & \text{ if } i\in \Ainv\\
		A_i & \text{ if } i\in \Asing
	\end{cases}$.
\end{itemize}
\end{remark}

\begin{remark}
	The quantity~\eqref{winding speed} above measures the speed of the projective curve $\Lambda\ni t\mapsto A_j(t)\, v\in\Proj$.
	Moreover, if $\uA(t)$ is positively winding and invertible then 
	the inverse cocycle $\uA(t)^{-1}$ is negatively winding.
\end{remark}

\begin{remark}
By the chain rule (see also~\cite[Definition 1.1, Proposition 2.4]{BCDFK}), if    $\uA(t)$ is positively winding then
for all $\omega\in \Ainv^n$, $n\in\N$  and  $i\in \Asing$,
$$ \frac{(A^n_t(\omega)\, r_i(t) )\wedge (\frac{d}{dt} A^n_t(\omega)\, r_i(t))}{\norm{A^n_t(\omega) \, r_i(t)}^2} \geq c ,$$
and
$$	\frac{(A^n_t(\omega)^{-1}\, k_i(t) )\wedge (\frac{d}{dt} A^n_t(\omega)^{-1}\, k_i(t))}{\norm{A^n_t(\omega)^{-1} \, k_i(t)}^2} \leq -c .$$
\end{remark}

\begin{remark}
Let	 $A:\R\to \SL_2'(\R)$ be any smooth curve and $v\in \R^2$  a  non zero vector
such that $\hat A(t_0)\, \hat v=\hat v$ and  $A(t_0)$ is an involution (i.e., it has eigenvalues $\pm 1$). An easy calculation shows that
$$\frac{d}{dt}\left[  \frac{A(t)^2\, v}{\norm{A(t)^2 \, v}} \right]_{t=t_0}=0 .$$
Hence no $\SL_2'(\R)$-valued cocycle with values of negative determinant can be positively winding in any reasonable sense.	
This explains why the next result does not hold for general $\Mat_2(\R)$-valued cocycles.
\end{remark}

\medskip

\begin{definition}
We say that $\uA:\Lambda\to \Matdm^k$ is a \textit{rich family} if for every  $p_0\in \Lambda$ there exists a smooth curve $I\ni t\mapsto \alpha(t)\in\Lambda$  passing though $p_0$ such that the $1$-parameter family $\uA(\alpha(t))$ is positively winding.
\end{definition}

\bigskip

\begin{theorem} \label{ManeBochiMat2}
Let  $\uA \colon \Lambda \to \Matdm^k$ be a  rich smooth family of cocycles on some open set $\Lambda \subseteq \R^m$ and assume that there exists  $i \in \{ 1, \dots ,k \}$ such that $\rank(A_i(t))=1$  for every $t \in \Lambda$. Then given $t \in \Lambda$, either $\uA(t)$ is projectively uniformly hyperbolic or else there exists a sequence $t_n \to t$ in $\Lambda$ such that
$\uA(t_n)$ has a periodic null word, and in particular  $L_1(\uA(t_n)) = -\infty$, for every $n \in \N$.
\end{theorem}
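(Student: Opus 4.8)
The plan is to exploit the branch characterization of projective uniform hyperbolicity (Theorem~\ref{thm2: branch charact of UH}) together with the positive winding hypothesis to force a collision between a forward branch and a backward branch under an arbitrarily small perturbation of the parameter. First I would reduce to the one-parameter situation: since $\uA$ is a rich family, through the given $t$ there passes a smooth curve $\alpha$ with $\uA(\alpha(\cdot))$ positively winding, so it suffices to treat a positively winding one-parameter family $\uA(s)$ and produce parameters $s_n\to s_0$ with a periodic null word. I would also dispose of the degenerate cases first: if $\uA_{\rm inv}=(A_i)_{i\in\Ainv}$ is \emph{not} projectively uniformly hyperbolic, then $\uA$ is certainly not PUH (domination of the full cocycle restricts to domination of the invertible sub-cocycle), and one can perturb within the invertible block using an ABY-type argument — actually, more directly, if $\uA_{\rm inv}$ fails PUH then by Theorem~\ref{ABY2} (or by a winding argument in the invertible block) one creates, under a small parameter perturbation, a periodic orbit in $\Ainv^\Z$ whose product matrix collides a kernel $k_i$ with the relevant invariant direction, yielding a periodic null word; and if $\uA$ is diagonalizable the structure is rigid and one checks the dichotomy by hand (as in Remark~\ref{RmkNonDiag}, the relevant parameter-move either keeps it PUH or immediately destroys it into a null word). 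So the heart of the matter is: $\uA(s_0)$ with $\uA_{\rm inv}$ PUH, $\uA$ not diagonalizable, and $\uA(s_0)$ \emph{not} PUH.

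In that case, by Theorem~\ref{thm2: branch charact of UH}, condition (3) fails, so either $\Wscr^+\cap\Kscr(\uA)\neq\emptyset$ or $\Rscr(\uA)\cap\Wscr^-\neq\emptyset$; say the former (the other is symmetric). This means there is a branch $\underline\omega\in\Bscr_n^+(i)$ departing from some $i\in\Asing$ with $A^{n-1}(\underline\omega)\,r_i$ equal to, or in the closure of the set of such points accumulating at, some kernel $k_{i'}$, $i'\in\Asing$. If the collision is exact, $\hat A^{n-1}(\underline\omega)\,\hat r_i=\hat k_{i'}$ at $s=s_0$, then the periodic word obtained by cycling through $i'$ (which sends everything to $r_{i'}$), then the invertible block $\omega_1,\dots,\omega_{n-1}$, then $i$ (whose range is $r_i$), applied to the point $\hat r_i$, returns to $\hat k_{i'}$, and composing $A_{i'}$ kills it: this is a periodic null word already at $s_0$, so take $s_n=s_0$. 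If instead the collision is only approximate — the more typical situation — then for each $\varepsilon>0$ there is a forward branch whose terminal range point is within $\varepsilon$ of some kernel; here I would use the positive winding hypothesis to \emph{close the gap exactly} by wiggling the parameter. Concretely, the remarks following Definition~\ref{def pos winding} give that along any invertible word the image of a range $A^m_s(\omega)\,r_i(s)$ moves with strictly positive projective speed (bounded below by $c>0$, uniformly), while the kernels $k_{i'}(s)$ move with nonpositive speed; hence the signed projective distance between the evolved range and the target kernel is strictly monotone in $s$ with derivative bounded below by $c$, so by the intermediate value theorem there is a nearby parameter $s'$ with $\abs{s'-s_0}\le\varepsilon/c$ at which the collision becomes exact — and then one must arrange that this exact collision is \emph{periodic}.

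The periodicity is the main obstacle, and the plan for it is the standard shadowing/pseudo-orbit closing argument from the ABY framework: having a (finite) exact collision $\hat A^{n-1}(\underline\omega)\,\hat r_i=\hat k_{i'}$ at parameter $s'$ does not by itself give a \emph{periodic} null word, but one builds a genuine periodic word by concatenating: start at symbol $i'$ (singular, range $r_{i'}$), follow an invertible sub-branch that steers $\hat r_{i'}$ into the PUH multi-cone $M$ of $\uA_{\rm inv}$ (possible in a uniformly bounded number $N$ of steps since $r_{i'}\notin\Kinv^s$ by Proposition~\ref{PropEmptyInt}, which applies because we are in the non-diagonalizable case), then continue inside $M$ and use that $\uA_{\rm inv}$ is PUH so that a suitable invertible word brings the point back arbitrarily close to the kernel direction $k_{i'}$ — here again the positive winding of the invertible block lets us adjust $s$ slightly to make the return hit $\hat k_{i'}$ \emph{exactly}, after which appending the symbol $i'$ closes the loop and produces $A^{\text{period}}(\omega^{\rm per})=0$. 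One must check the two parameter adjustments (closing the initial collision, and closing the loop) can be done simultaneously, i.e. that the map $s\mapsto(\text{two projective gaps})$ is, to leading order, a submersion near $s_0$; this is where I expect the real technical work, and it is precisely what the positive-winding hypothesis is designed to guarantee — each gap is controlled by an independent stretch of the orbit and the winding speeds are of one sign, so a one-parameter deformation suffices provided one chooses the two invertible stretches to have the right lengths (a pigeonhole/compactness argument on finitely many branches and cone-transit times gives uniform bounds, hence the $t_n\to t$ conclusion with $L_1(\uA(t_n))=-\infty$ because a periodic null word forces $\rank A^{n}(\omega)=0$ for all large $n$ along that periodic $\omega$, whence the norm of iterates drops to $0$ and the exponent is $-\infty$).
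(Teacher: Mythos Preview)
Your main case (when $\uA_{\rm inv}$ is PUH and $\uA$ is not diagonalizable) is essentially right, and close to what the paper does --- the paper uses the failure of condition (2) of Theorem~\ref{thm2: branch charact of UH} rather than (3), matching a forward branch from $r_j$ against a \emph{backward} branch to $k_i$, but your version via (3) works just as well. However, you then manufacture a difficulty that is not there. Once the intermediate value theorem gives you a parameter $s'$ with the \emph{exact} collision $\hat A^{n-1}_{s'}(\underline\omega)\,\hat r_i(s')=\hat k_{i'}(s')$, you are done: the finite word $(i,\omega_1,\dots,\omega_{n-1},i')$ already satisfies $A_{i'}\,A^{n-1}(\underline\omega)\,A_i=0$, and its periodic extension (or, in the Markov case, a periodic closing via primitivity) is a periodic null word. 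There is no need for any shadowing construction, no second collision to close, and no ``two parameter adjustments'' to perform simultaneously. Your elaborate paragraph on steering $\hat r_{i'}$ into the multi-cone, returning near $\hat k_{i'}$, and checking a submersion condition is entirely superfluous and should be dropped.

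The genuine gap is the case where $\uA_{\rm inv}(t)$ is \emph{not} PUH. Your appeal to Theorem~\ref{ABY2} is misplaced: that theorem describes the \emph{boundary} of the PUH locus, and neither of its conclusions (parabolic periodic orbit, or heteroclinic connection between hyperbolic periodic points) hands you a collision of a range with a kernel of a singular matrix. The vague alternative ``a winding argument in the invertible block'' is not a proof. What the paper actually does here is substantial: first a Yoccoz-type lemma (Lemma~\ref{yoccoz lemma}) shows that if every periodic product $A^n_{t'}(\omega)$ were hyperbolic for all $t'$ in a neighborhood of $t$, then $\uA_{\rm inv}(t)$ would be uniformly hyperbolic --- hence non-PUH forces non-hyperbolic periodic words at parameters $t_n\to t$. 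Second, a trace lemma (Lemma~\ref{trace lemma}) upgrades these, by an arbitrarily small further displacement, to \emph{elliptic} periodic words with \emph{irrational} rotation number. At such a parameter the iterates of any range $r_j$ under this elliptic word are dense in $\Pp^1$, so $\Wscr^+=\Wscr^-=\Pp^1$ and the Case~1 collision argument applies. You would need to supply this mechanism (or an equivalent one) to complete the proof.
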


\begin{proof}
Suppose that $\uA(t)$ is not projectively uniformly hyperbolic. We are going to show that there exists a sequence $t_n \to t$ in $\Lambda$ such that has a periodic null word for every $n$.  Because $\uA(t)$ is rich we may immediately assume that $\dim\Lambda=1$ and the family is positively winding.  We divide the proof in three cases, according to  Theorem~\ref{thm2: branch charact of UH}.

\textbf{Case 1}: Suppose that $\uA_{\rm inv}(t)$ is projectively uniformly hyperbolic and $\uA(t)$ is not diagonalizable. Then, by Theorem~\ref{thm2: branch charact of UH} \, $\Wscr^+\cap \Wscr^- \neq \emptyset$.
 I.e., there exist  $i,j\in \Asing$ and sequences of finite words $\omega_k$ and $\omega_k'$ of sizes $n_k$ and $n_k'$, respectively, with $\lim_{k\to \infty} d(A^{-n_k'}_t(\omega_k')\, k_i,\, A^{n_k}_t(\omega_k)\, r_j)=0.$ Hence, since the projective curves
 $t\mapsto A^{-n_k'}_t(\omega_k')\, k_i$ and
 $t\mapsto A^{n_k}_t(\omega_k)\, r_j$ wind in opposite directions, with a speed uniformly bounded from below, 
 there is a sequence of parameters $t_k\to t$ in $\Lambda$ where
 $A^{-n_k'}_{t_k}(\omega_k')\, k_i=A^{n_k}_{t_k}(\omega_k)\, r_j$.
 This implies    $A_i\, A^{n_k'+n_k}_{t_k}(\omega_k'\,\omega_k)\, A_j=0$  and  hence $L_1(\uA(t_k)) = -\infty$.

\textbf{Case 2}: Suppose that $\uA_{\rm inv}(t)$ is projectively uniformly hyperbolic and $\uA(t)$ is diagonalizable. If $\Kinv^u=\Rscr(\uA)$ and $\Kinv^s=\Kscr(\uA)$, then, by Remark~\ref{RmkNonDiag}, Theorem~ \ref{thm2: branch charact of UH} still holds. Therefore $\Kscr(\uA)\cap \Wscr^+ = \emptyset$ and $\Rscr(\uA)\cap \Wscr^- = \emptyset$, which by this theorem implies that $\uA(t)$ is projectively uniformly hyperbolic. 

Hence it suffices to consider the case in which $\Kinv^u= \Kscr(\uA)$ and $\Kinv^s=\Rscr(\uA)$. 
Since these sets are singletons,  increasing $t$ we move the ranges of singular matrices  out of $\Kinv^s$ and at the same time the kernels of these matrices  move away from $\Kinv^u$.
A continuity argument shows that the forward iterates of the ranges (moving towards $\Kinv^u$) and the backward iterates of the kernels (moving towards $\Kinv^s$) meet half way at a sequence of parameters $t_k\to t$, thus creating null words
where  $L_1(\uA(t_k))=-\infty$.

\textbf{Case 3}: Suppose that $\uA_{\rm inv}(t)$ is not projectively uniformly hyperbolic.  We claim
that there is a sequence of parameters $t_n\to t$ in $\Lambda$ such that $\uA_{\rm inv}(t_n)$
admits   elliptic matrices with  irrational rotation numbers. Then both $\Wscr^+$ and $\Wscr^-$ are equal to $\Proj$  and by
the argument of Case 1 each $t_n$ can be approximated by parameters corresponding to null  words where $L_1=-\infty$ (see also Example \ref{IrratRot}).

We need two lemmas before we continue, the first of which is a simple adaptation of Yoccoz~\cite[Lemma 2]{Ycz04}.

\begin{lemma}
	\label{yoccoz lemma}
Given  a family of cocycles $\uA:I\to \SL_2(\R)^k$  that is  positively winding,
where $I\subset \R$ is an open interval, there exists a constant $c>0$ such that if for some $\delta>0$ and $t_0\in I$,
every matrix $A^n_t(\omega)$ is hyperbolic for   $\omega\in X$, $n\in \N$ and $\abs{t-t_0}<\delta$,  $t\in I$  then the cocycle $\uA(t_0)$ is uniformly hyperbolic with
$\norm{A^n_{t_0}(\omega)}\geq e^{c\, \delta\, n}$,
for all $n\geq 0$ and $\omega\in X$.
\end{lemma}

\begin{proof}
 Strictly speaking,~\cite[Lemma 2]{Ycz04} applies only to families as in Remark~\ref{rmk pos winding examples}. For the sake of completeness we sketch an adaptation of Yoccoz's argument.
 The real projective space is the equator of the complex projective space (the Riemann sphere)  $\Cp^1$ and splits it in two hemi-spheres $\HP^-$ and $\HP^+$, each of which can be identified as a hyperbolic plane.

  Now, given $1\leq i\leq k$  and $\hat v\in \Pp^1$, when we complexify the curve
 $\R\ni t\mapsto \hat A_i(t)\, \hat v\in \Pp^1$,
 the map
 $\C\ni t\mapsto \hat A_i(t)\, \hat v\in \Cp^1$,
 defined on a small strip $\mathscr{S}$ around $I$, takes the interval $I$ to the equator of $\Cp^1$ and because of the positive winding property it maps the two semi-strips
 $\mathscr{S}^\pm:= \{t\in \mathscr{S} \colon \mathrm{Im}(t)\lessgtr 0\}$ away from the equator. This behavior is uniform in $\hat v\in \Pp^1$.
 Hence, if $t\in \mathscr{S}^+$, resp. $t\in \mathscr{S}^-$, then the projective map $\hat A_i(t)\colon \Cp^1\to \Cp^1$ contracts $\HP^+$, resp. $\HP^-$,
 by a factor of order $\exp\left( -c\, |\mathrm{Im}(t)| \right)$, where $c$ is the winding speed. Notice that the alluded uniformity ensures that
 $\hat A_i(t)(\partial \HP^+)=\hat A_i(t)(\Pp^1)\Subset \HP^+$.

 The rest of the proof follows~\cite[Lemma 2]{Ycz04}. 
\end{proof}

\begin{lemma}
	\label{trace lemma}
	Let $A:I\subset \R\to \SL_2(\R)$ be a smooth positively winding family and	consider the function $f(t):=\tr(A(t))$.
	Then given $t_0\in I$:
	\begin{enumerate}
		\item If $|f(t_0)|<2$  or $|f(t_0)|=2$ but $A(t_0)\neq \pm I$  then  $f'(t_0)\neq 0$;
		\item If $A(t_0)= I$ and $f'(t_0)=0$  then $f''(t_0)<0$ and $t=t_0$ is a strict local maximum of $f(t)$;
		\item If $A(t_0)= -I$ and $f'(t_0)=0$  then $f''(t_0)>0$ and $t=t_0$ is a strict local minimum of $f(t)$.	
	\end{enumerate}
\end{lemma}

\begin{proof}	Because $\det(A(t))=1$ for all $t\in I$, we have  
	\begin{equation}
		\label{d det A}
		\tr( A(t)^{-1} \dot A(t)) =\frac{d}{dt}\det(A(t))=0 .
	\end{equation}

	\noindent
	(1).\; The first alternative ($|f(t_0)|<2$) holds because of~\cite[Proposition 3.21]{BCDFK}.
	We divide the second alternative ($|f'(t_0)|=2$ but $A(t_0)\neq \pm I$) in two sub-cases according to the sign of $f(t_0)$:
	
	\blob\;  $f(t_0)=2$ but $A(t_0)\neq I$. Assume for simplicity that this parabolic matrix takes the normal form
	$A(t_0)=\begin{bmatrix}
		1 & 1 \\ 0 & 1
	\end{bmatrix}$.
	If $f'(t_0)=0$ then
	$\dot A(t_0)=\begin{bmatrix}
		\kappa & \alpha \\ \beta & -\kappa 
	\end{bmatrix}$ for some constants
	$\alpha, \beta, \kappa$.
	Hence
	$$ A(t_0)^{-1} \dot A(t_0)= \begin{bmatrix}
		1 & -1 \\ 0 & 1
	\end{bmatrix}\,\begin{bmatrix}
		\kappa & \alpha \\ \beta & -\kappa 
	\end{bmatrix} = \begin{bmatrix}
		\kappa -\beta & \alpha+\kappa  \\ \beta & -\kappa 
	\end{bmatrix}
	$$
	which, because  $\tr(A(t_0)^{-1} \dot A(t_0))=0$, implies that $\beta=0$.
	Finally since the family is positively winding, the symmetric matrix
	$$ E^\sharp = J\, A(t_0)^{-1} \dot A(t_0) =
	\begin{bmatrix} 0 & 1 \\ -1 & 0
	\end{bmatrix} \,  \begin{bmatrix}
		\kappa  & \alpha+\kappa  \\ 0 & -\kappa 
	\end{bmatrix} =
	\begin{bmatrix}
		0 & -\kappa\\
		-\kappa  & -\alpha-\kappa   
	\end{bmatrix}$$
	is  positive definite by~\cite[Proposition 6.5]{BCDFK},  but on the other hand it has negative determinant $-\kappa^2$. This contradiction proves that $f'(t_0)\neq 0$.

	\blob\; \;  $f(t_0)=-2$ but $A(t_0)\neq -I$. We  assume  the parabolic matrix takes the normal form
	$A(t_0)=\begin{bmatrix}
		-1 & 1 \\ 0 & -1
	\end{bmatrix}$.
	If $f'(t_0)=0$ then again
	$\dot A(t_0)=\begin{bmatrix}
		\kappa & \alpha \\ \beta & -\kappa 
	\end{bmatrix}$ for some constants
	$\alpha, \beta, \kappa$.
	Hence
	$$ A(t_0)^{-1} \dot A(t_0)= \begin{bmatrix}
		-1 & -1 \\ 0 & -1
	\end{bmatrix}\,\begin{bmatrix}
		\kappa & \alpha \\ \beta & -\kappa 
	\end{bmatrix} = \begin{bmatrix}
		-\kappa -\beta & -\alpha+\kappa  \\ -\beta &  \kappa 
	\end{bmatrix}
	$$
	which, because $\tr(A(t_0)^{-1} \dot A(t_0))=0$ implies that $\beta=0$.
	Again,  by~\cite[Proposition 6.5]{BCDFK} the symmetric matrix
	$$ E^\sharp = J\, A(t_0)^{-1} \dot A(t_0) =
	\begin{bmatrix} 0 & 1 \\ -1 & 0
	\end{bmatrix} \,  \begin{bmatrix}
		-\kappa  & -\alpha+\kappa  \\ 0 &  \kappa 
	\end{bmatrix} =
	\begin{bmatrix}
		0 & \kappa\\
		\kappa  & \alpha-\kappa   
	\end{bmatrix}$$
	should be positive definite but it 
	has negative determinant $-\kappa^2$.
	This contradiction proves that $f'(t_0)\neq 0$.
	
	\bigskip
	
	\noindent
	(2) and (3).\; 
	Differentiating~\eqref{d det A} 
	$$ \tr\left( A(t)^{-1}\, \ddot A(t) -A(t)^{-1} \dot A(t)\, A(t)^{-1} \dot A(t)  \right)=0 , $$
	which implies that 
	$$ \tr\left( A(t)^{-1}\, \ddot A(t) \right) = \tr\left(   ( A(t)^{-1} \dot A(t)) ^2 \right) . $$

	We claim that the right-hand-side above is negative.
	In fact by~\eqref{d det A}  we have  $A(t)^{-1} \dot A(t)=\begin{bmatrix}
		\kappa & \alpha \\ \beta & -\kappa
	\end{bmatrix}$ and by~\cite[Proposition 6.5]{BCDFK} 
	the positive winding assumption implies that the symmetric matrix
	$$  E^\sharp = J\,A(t)^{-1} \dot A(t) 
	=\begin{bmatrix}
		\beta & -\kappa   \\ -\kappa & -\alpha
	\end{bmatrix} $$
	is  positive definite. Hence $-\alpha\, \beta -\kappa^2 >0$, or equivalently $\alpha\, \beta +\kappa^2 <0$.

	Finally, since
	$$  ( A(t)^{-1} \dot A(t)) ^2 
	=\begin{bmatrix}
		\alpha\,\beta + \kappa^2 & 0  \\ 0  & \alpha\,\beta + \kappa^2
	\end{bmatrix} $$
	its trace is negative, which proves the claim. In particular we have
	$$  \tr\left( A(t_0)^{-1}\, \ddot A(t_0) \right) =2\, (\alpha\, \beta+\kappa^2) <0 . $$
	If $A(t_0)=\pm I$ then $\pm f''(t_0)=\tr (A(t_0)^{-1}\, \ddot A(t_0))<0$ and the conclusions (2) and (3) hold.
\end{proof}
 
To prove the claim, normalizing the cocycle if necessary we may assume that $\uA_{\rm inv}$ takes values in $\SL_2(\R)$. Assume by contradiction
that for some open interval $J$ around $t$,
every iterate of the cocycle $\uA_{\rm inv}(t')$ is hyperbolic for all $t'\in J$. Then by  Lemma~\ref{yoccoz lemma}, the cocycle
$\uA_{\rm inv}(t)$ would be uniformly hyperbolic,
which contradicts the assumption.
Hence $\uA_{\rm inv}(t')$ has non hyperbolic periodic words for a sequence of parameters $t'=t_n$ converging to $t$. By Lemma~\ref{trace lemma}, with arbitrary small displacements of these parameters we obtain elliptic periodic orbits with irrational rotation numbers.

This concludes the proof the claim and hence of Theorem~\ref{ManeBochiMat2}.
\end{proof}

\begin{corollary} \label{cor1ManeBochi}
Given a rich analytic family $\uA \colon \Lambda \to \Matdm^k$, if $L_1(\uA(t_0)) > -\infty$, then either $L_1(\uA(t))$ is analytic around $t_0$ or else this parameter is a discontinuity point of $t\mapsto L_1(\uA(t))$.
\end{corollary}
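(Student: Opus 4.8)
The plan is to derive Corollary~\ref{cor1ManeBochi} directly from Theorem~\ref{ManeBochiMat2} together with the analyticity half of Ruelle's theorem. First I would fix $t_0\in\Lambda$ with $L_1(\uA(t_0))>-\infty$ and, keeping the standing hypothesis of Theorem~\ref{ManeBochiMat2} that some component $A_i(t)$ has rank $1$ for all $t$ (this hypothesis is genuinely needed: on a family of invertible cocycles $L_1$ is continuous but typically only log-H\"older, so the dichotomy fails without a singular component), I would apply Theorem~\ref{ManeBochiMat2} at $t_0$. This yields one of two alternatives: either $\uA(t_0)$ is projectively uniformly hyperbolic, or there is a sequence $t_n\to t_0$ in $\Lambda$ with $L_1(\uA(t_n))=-\infty$ for every $n$.

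In the first alternative I would argue analyticity as follows. By Theorem~\ref{thm: multi-cone charact of UH}, projective uniform hyperbolicity is equivalent to the existence of an invariant multi-cone, which is an open condition on the tuple; hence there is a neighborhood $U\ni t_0$ on which $\uA(t)$ is projectively uniformly hyperbolic. On $U$ one then invokes the analyticity part of Ruelle's theorem: the real-analytic family $A_i(\cdot)$ extends holomorphically to a complex neighborhood of $U$, the uniform projective contraction of the invariant multi-cone persists for nearby complex parameters (a holomorphic motion of the cone), and writing $L_1(\uA(t))$ as the integral of $\log\frac{\norm{A_{\omega_1}(t)\,v}}{\norm{v}}$ against the unique stationary measure supported in the cone — which itself depends holomorphically on $t$ — one obtains a holomorphic extension of $t\mapsto L_1(\uA(t))$, hence real-analyticity on $U$. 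So in this case $L_1$ is analytic around $t_0$.

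In the second alternative the conclusion is immediate: since $L_1(\uA(t_0))>-\infty$ while $L_1(\uA(t_n))=-\infty$ along $t_n\to t_0$, the values $L_1(\uA(t_n))$ do not converge to $L_1(\uA(t_0))$, so $t\mapsto L_1(\uA(t))$ is discontinuous at $t_0$. Together the two alternatives give the stated dichotomy.

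The one nonroutine ingredient — and the point I would spend the most care on — is the claimed analyticity in the hyperbolic case, namely that Ruelle's analyticity statement, classically phrased for invertible cocycles, survives the presence of rank-$1$ components. I do not expect a real obstruction here: after shrinking the invariant multi-cone so that it avoids the kernels of the singular matrices (possible, e.g., by Proposition~\ref{reducing the multi-cone}), the projective action on the cone is a genuine uniform contraction of a compact subset of $\Pp^1$ — each singular matrix simply collapses the cone to one of its interior points $r_i$ — so the contraction, the spectral gap of the corresponding transfer operator, and therefore the holomorphic dependence of the stationary measure and of $L_1$, all carry over from the invertible case. It is also worth recording (though not logically needed above) that projective uniform hyperbolicity of a rank-$1$ cocycle already forces $L_1>-\infty$, since it rules out null words and the surviving direction stays in the cone away from every kernel, giving a uniform exponential lower bound for $\norm{A^n(\omega)}$.
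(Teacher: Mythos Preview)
Your argument is correct and follows the same route as the paper's own proof: apply Theorem~\ref{ManeBochiMat2} at $t_0$, invoke Ruelle's theorem in the projectively uniformly hyperbolic alternative, and read off discontinuity from $L_1(\uA(t_n))=-\infty$ in the other. The paper simply cites Ruelle as a black box, so your extra paragraph justifying why the analyticity conclusion survives rank-$1$ components, and your remark that the standing singular-component hypothesis of Theorem~\ref{ManeBochiMat2} is implicitly carried over, are both more careful than the original.
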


\begin{proof}
By Theorem \ref{ManeBochiMat2}, either $\uA(t_0)$ is projectively uniformly hyperbolic or there exists a sequence $t_n \to t$ such that $L_1(\uA(t_n))=-\infty$. In the first case by a theorem of D. Ruelle \cite{Rue79a}, $L_1(\uA(t))$ is analytic in a neighborhood of $t_0$. In the second case, since we assume $L_1(\uA(t_0))>-\infty$ and there is a sequence $t_n \to t_0$ such that $L_1(\uA(t_n))=-\infty$, the Lyapunov exponent is discontinuous at $t_0$.
\end{proof}

\begin{definition}\label{sh:def}
A function $u\colon\Omega\to[-\infty,\infty)$ is called subharmonic in the domain $\Omega \subset \C$ if  $u$ is upper semi-continuous and for every $z \in \Omega$,
$$u (z) \le  \int_{0}^{1} u (z + r e^{2 \pi i \theta}) d \theta\,,$$
for some $r_0 (z) > 0$ and for all $r \le r_0 (z)$.
\end{definition}


Basic examples of subharmonic functions are $\log \sabs{z - z_0}$ or more generally, $\log \sabs{f (z)}$ for some analytic function $f (z)$ or $\int \log \sabs{z - \zeta} d \mu (\zeta)$ for some positive measure with compact support in $\C$.

The maximum of a finite collection of subharmonic functions is subharmonic, while the supremum of a collection (not necessarily finite) of subharmonic functions is subharmonic provided it is upper semi-continuous. 
In particular this implies that  if $A \colon \Omega \to \Matdm$ is a matrix valued analytic function on some open set $\Omega\subset \C$, then
$$u (t) := \log \norm{A(t)} = \sup_{\norm{v}, \norm{w} \le 1} \, \log \abs{\avg{A (t) \, v, w}}$$
is subharmonic in $\Omega$. 

Moreover, the infimum of a decreasing sequence of subharmonic functions is subharmonic. This shows that if $\uA \colon \Omega \to \Matdm^k$ is analytic, then the map $t \mapsto L_1(\uA(t))$ is subharmonic on $\Omega$.

\begin{lemma}\label{absl} Let $\Lambda \subset \R$ be a compact interval, let $\Omega \subset \C$ be an open, complex strip that contains $\Lambda$ and let
$u \colon \Omega \to [-\infty,\infty)$ be a subharmonic function such that   for some constant $C_0<\infty$ we have $u(z)\le C_0$ for all $z\in \Omega$ and $u(z_0)\ge - C_0$ for some $z_0\in \Omega$. 
 Then for all $N \in \N$,
$$\Leb \, \{t\in\Lambda \colon u(t)<-N\} \le  C \, e^{-N^\gamma}  ,$$ 
where $\gamma>0$ and $C$ is a finite constant depending on $C_0$,  $\Lambda$, $z_0$ and $\Omega$. 

In particular, $\Leb \, \{t\in\Lambda \colon u(t) = - \infty\} = 0 $.
\end{lemma}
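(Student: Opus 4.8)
The plan is to reduce the estimate to a standard quantitative bound for the size of the sublevel sets of a subharmonic function, which is in turn a consequence of the Riesz representation theorem together with a Cartan-type estimate (this is the classical Bernstein/Bourgain-type argument, see e.g. Levin's book on subharmonic functions, or the analogous lemmas in the Bourgain--Goldstein approach to Lyapunov exponents). First I would normalize: replace $u$ by $u - C_0$, so that $u \le 0$ on $\Omega$ and $u(z_0) \ge -2C_0$ for some interior point $z_0$. Shrinking the strip $\Omega$ slightly to a smaller strip $\Omega'$ still containing $\Lambda$ and with $z_0 \in \Omega'$, I would fix a disk $D = D(z_1, R) \subset \Omega$ with $\overline{D} \subset \Omega$, $\Lambda \subset D(z_1, R/2)$ and $z_0 \in D(z_1, R/2)$.

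The key step is the Riesz decomposition: on $D$ one writes
\begin{equation*}
u(z) = h(z) + \int_{\overline{D}} \log\abs{z - \zeta}\, d\nu(\zeta),
\end{equation*}
where $h$ is harmonic on $D$ and $\nu$ is the Riesz measure of $u$. Since $u \le 0$ on $D$, the mass $\nu(\overline{D'})$ on any slightly smaller disk $D'$ is bounded by a constant times $\sup_D u - \inf_{D'} u$; using the upper bound $u \le 0$ and the lower bound $u(z_0) \ge -2C_0$ together with Harnack's inequality for the (normalized, nonnegative) harmonic part, one controls both $\norm{h}_{L^\infty(D')}$ and the total mass $\norm{\nu}$ by a constant depending only on $C_0, R$ and the geometry, i.e. on $C_0, \Lambda, z_0, \Omega$. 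Then for $t \in \Lambda$,
\begin{equation*}
u(t) \ge -\,\norm{h}_{L^\infty(D')} + \int \log\abs{t-\zeta}\,d\nu(\zeta) \ge -C_1 - \norm{\nu}\,\log\frac{1}{\operatorname{dist}(t, \operatorname{supp}\nu)}\quad\text{where the integrand is negative,}
\end{equation*}
more precisely $u(t) < -N$ forces the potential $\int \log\frac{1}{\abs{t-\zeta}}\,d\nu(\zeta)$ to exceed $N - C_1$. A Cartan estimate (the potential of a measure of mass $\le \norm{\nu}$ exceeds level $L$ only on a set coverable by disks of total radius $\le C\, e^{-L/\norm{\nu}}$) then gives
\begin{equation*}
\Leb\{t \in \Lambda : u(t) < -N\} \le C\, e^{-(N-C_1)/\norm{\nu}},
\end{equation*}
which is of the claimed form with $\gamma = 1$ (any $\gamma \le 1$ works after adjusting $C$); the final assertion about the $-\infty$ sublevel set follows by letting $N \to \infty$.

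The main obstacle is purely bookkeeping: making sure the constant $\norm{\nu}$ and the harmonic-part bound $C_1$ genuinely depend only on the data $C_0, \Lambda, z_0, \Omega$ and not on $u$ itself. This is where the two-sided control on $u$ is essential — the upper bound $u \le C_0$ globally gives the mass bound for $\nu$ via the sub-mean-value property, and the single lower bound $u(z_0) \ge -C_0$ at an interior point, fed through Harnack, prevents the harmonic part from being arbitrarily negative on $\Lambda$. I do not expect any genuine difficulty beyond invoking these classical facts; the only care needed is to choose the auxiliary disks so that $\Lambda$, $z_0$ and a neighborhood of $\operatorname{supp}\nu$ sit comfortably inside $\Omega$, which is possible precisely because $\Lambda$ is compact and $\Omega$ is an open strip containing it.
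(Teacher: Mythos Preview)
Your proposal is correct and follows exactly the approach the paper indicates: the paper's own proof simply cites the analogous lemma in~\cite{Coposim} and points to the quantitative Riesz representation theorem plus Cartan's estimate (as in~\cite{GS-fine}), which is precisely the Riesz-decomposition-plus-Cartan argument you sketch. One minor caveat: if $\Lambda$ is long relative to the width of $\Omega$ you cannot fit $\Lambda$ and $z_0$ into a single half-disk $D(z_1,R/2)\subset\Omega$, so you should cover $\Lambda$ by finitely many overlapping disks and propagate the one-point lower bound via Harnack along the chain---this is routine and does not affect the argument.
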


\begin{proof} The statement is essentially the one-dimensional version of~\cite[Lemma 3.1]{Coposim}. In~\cite{Coposim} the set $\Omega$ is an annulus and $\Lambda$ is the torus, which are transformed into our setting via the complex logarithmic function.  
The proof of this result is a consequence of a quantitative version of the  Riesz representation theorem for subharmonic functions and of Cartan's estimate for logarithmic potentials, see~\cite[Lemma 2.2 and Lemma 2.4]{GS-fine}. 
\end{proof}

\begin{corollary} \label{cor2ManeBochi}
Given a rich analytic family $\uA \colon \Lambda \to \Matdm^k$, if there exists some $t_0 \in \Lambda$ such that $L_1(\uA(t_0))>-\infty$, then $L_1(\uA(t)) > -\infty$ for Lebesgue almost every $t \in \Lambda$. Therefore $L_1(\uA(t))$ satisfies an almost everywhere dichotomy: it is either analytic or discontinuous. 
\end{corollary}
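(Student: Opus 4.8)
The plan is to use the subharmonicity of the Lyapunov exponent, together with the quantitative estimate of Lemma~\ref{absl}, to show that $L_1(\uA(t))=-\infty$ only on a Lebesgue null set, and then to read off the almost everywhere dichotomy from Corollary~\ref{cor1ManeBochi}. Richness of the family is needed only for this last step; the finiteness of $L_1$ almost everywhere uses only analyticity.

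For the first part I would argue as follows. Since $\uA$ is real analytic, for any real analytic curve $\R\supset I\ni s\mapsto\beta(s)\in\Lambda$ passing through a point where $L_1(\uA(\cdot))$ is finite, the composition $\uA\circ\beta$ extends analytically to an open complex strip $\Omega\subset\C$ around $I$ with $\overline\Omega$ compact in the domain of the extension, and then $u(z):=L_1(\uA(\beta(z)))$ is subharmonic on $\Omega$ (as recalled before Lemma~\ref{absl}), bounded above there by $\log M$ where $M$ bounds the operator norms of the $A_i(\beta(z))$ on $\overline\Omega$, and not identically $-\infty$. Lemma~\ref{absl} then yields $\Leb\{s\in I:L_1(\uA(\beta(s)))=-\infty\}=0$. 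To upgrade this to full Lebesgue measure on $\Lambda$ one proceeds by induction on $\dim\Lambda=m$: the case $m=1$ is the previous line (covering $\Lambda$ by compact subintervals, using connectedness and the hypothesis at $t_0$); for $m>1$, on a coordinate box $Q=I_1\times Q'$ one applies Fubini, using the one variable step on the slices $t_1\mapsto L_1(\uA(t_1,t'))$ that are not identically $-\infty$ and the inductive hypothesis to control the set of $t'$ for which that slice is identically $-\infty$, which for any fixed $t_1^{*}\in I_1$ is contained in the negligible set $\{t':L_1(\uA(t_1^{*},t'))=-\infty\}$. Since $\Lambda$ is a countable union of such boxes, $L_1(\uA(t))>-\infty$ for Lebesgue almost every $t\in\Lambda$.

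Finally, at every such $t$ we have $L_1(\uA(t))>-\infty$, so Corollary~\ref{cor1ManeBochi} applies at $t$ and gives that either $L_1(\uA(\cdot))$ is analytic near $t$ or $t$ is a discontinuity point of $t\mapsto L_1(\uA(t))$; as this holds off a Lebesgue null set, this is exactly the asserted almost everywhere dichotomy. I expect the main obstacle to be the higher dimensional step, namely the bookkeeping needed to certify, slice by slice, that the relevant subharmonic functions are not identically $-\infty$ so that Lemma~\ref{absl} and the inductive hypothesis genuinely apply; here one uses that a subharmonic function that is $-\infty$ on an open set is $-\infty$ throughout its connected domain, so that the ``not identically $-\infty$'' property propagates from the point $t_0$ along complex lines. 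The one dimensional case and the final application of Corollary~\ref{cor1ManeBochi} are routine.
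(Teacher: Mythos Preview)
Your argument is correct and follows the same core strategy as the paper: subharmonicity of $t\mapsto L_1(\uA(t))$ together with Lemma~\ref{absl} gives finiteness almost everywhere, and then Corollary~\ref{cor1ManeBochi} yields the dichotomy at each such point.

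The difference lies in how the higher-dimensional case is handled. The paper simply asserts that it suffices to treat $\dim\Lambda=1$ with the family positively winding, invoking richness at the outset and leaving the passage to general $\Lambda$ implicit. You instead separate the two ingredients: the a.e.\ finiteness is obtained from analyticity alone, via subharmonicity along coordinate lines, Fubini, and induction on $\dim\Lambda$, while richness is invoked only at the end through Corollary~\ref{cor1ManeBochi}. This is a genuine clarification: it makes transparent that the positive winding hypothesis plays no role in the measure-zero statement, and it replaces the paper's unexplained reduction by a concrete slicing argument. Your bookkeeping in the inductive step (choosing $t_1^\ast$ so that the $(m-1)$-dimensional slice inherits a finite point, and propagating across overlapping boxes via connectedness of $\Lambda$) is the right way to make this precise; one could alternatively appeal directly to standard properties of plurisubharmonic functions, but your route stays closer to the one-variable Lemma~\ref{absl} that the paper provides.
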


\begin{proof}
 It is enough to prove the corollary when $\dim \Lambda=1$ and the family is positively winding.
Let $\Omega$ be a thin complex strip around $\Lambda$.

By continuity of the Lyapunov exponent,
if  $\Lambda$ is compact then $u(z):=L_1(\uA(z))$  admits a global  upper bound over $\Omega$ and if for some $t_0\in \Lambda$, $L_1(\uA(t_0)) >-\infty$, then Lemma~\ref{absl} is applicable, so that the set $\{ t \in \Lambda \colon L_1(\uA(t)) = -\infty \}$ has zero Lebesgue measure. The conclusion follows from Corollary~\ref{cor1ManeBochi}.
\end{proof}

\begin{remark}
In particular, the previous corollary holds for rich analytic families for which there exist $t_0 \in \Lambda$ such that $\uA(t_0)$ is projectively uniformly hyperbolic.
\end{remark}

We say that a set is \textit{residual} if it is a countable intersection of open and dense sets. The next corollary, is an adaptation of~\cite[Corollary 9.6]{Viana2014} and it shows that the set of continuity points of the Lyapunov exponent is a residual subset of $L^\infty(X,\Matdm)$.

Let $\mathcal{UH}$ denote the set of parameters $t \in \Lambda$ such that $\uA(t)$ is projectively uniformly hyperbolic.

\begin{corollary} \label{cor3ManeBochi}
The set $\{ t \in \Lambda \colon L_1(\uA(t)) = -\infty \}$ is a residual subset of $\Lambda \setminus \mathcal{UH}$.
\end{corollary}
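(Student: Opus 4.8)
The plan is to derive this from Theorem~\ref{ManeBochiMat2} by a Baire category argument performed inside the closed set $Y:=\Lambda\setminus\mathcal{UH}$.

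First I would fix the topological setting. Projective uniform hyperbolicity is an open condition, so $\mathcal{UH}$ is open in $\Lambda$ and $Y$ is closed in $\Lambda$; since $\Lambda$ is an open subset of $\R^m$, the set $Y$ is locally compact Hausdorff, hence a Baire space, so that "residual subset of $Y$" is the appropriate notion. Next I would use that $t\mapsto L_1(\uA(t))$ is upper semicontinuous --- indeed subharmonic, as recorded above --- so that for every $N\in\N$ the sublevel set $U_N:=\{t\in\Lambda\colon L_1(\uA(t))<-N\}$ is open in $\Lambda$, and therefore $U_N\cap Y$ is open in $Y$. Since $\{t\in\Lambda\colon L_1(\uA(t))=-\infty\}\cap Y=\bigcap_{N\in\N}(U_N\cap Y)$, the corollary reduces to showing that each $U_N\cap Y$ is dense in $Y$.

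The density step is where Theorem~\ref{ManeBochiMat2} enters. Given a nonempty relatively open set $W\subseteq Y$, write $W=V\cap Y$ with $V$ open in $\Lambda$ and choose $t\in W$. Then $\uA(t)$ is not projectively uniformly hyperbolic, so Theorem~\ref{ManeBochiMat2} yields a sequence $t_n\to t$ in $\Lambda$ with $\uA(t_n)$ admitting a periodic null word, whence $L_1(\uA(t_n))=-\infty$ for all $n$; in particular $t_n\in U_N$ for every $N$. For $n$ large we have $t_n\in V$. The one remaining point is that $t_n\in Y$, that is, $t_n\notin\mathcal{UH}$: if $\uA(t_n)$ were projectively uniformly hyperbolic, then by Ruelle's theorem (as invoked in the proof of Corollary~\ref{cor1ManeBochi}) $L_1(\uA(\cdot))$ would be analytic, in particular finite, near $t_n$, contradicting $L_1(\uA(t_n))=-\infty$. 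Thus $t_n\in V\cap Y=W$, so $U_N\cap W\ni t_n$ is nonempty; since $W$ was arbitrary, $U_N\cap Y$ is dense in $Y$, and hence $\{t\colon L_1(\uA(t))=-\infty\}\cap Y$ is a countable intersection of dense open subsets of $Y$, as required.

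I do not expect a genuine obstacle here: the substantial content is Theorem~\ref{ManeBochiMat2}, and everything else is the routine extraction of a residual set from a dense $G_\delta$. The only step that needs a word of justification is the implication "$L_1(\uA(t))=-\infty\Rightarrow t\notin\mathcal{UH}$", that is, that a projectively uniformly hyperbolic cocycle has finite first Lyapunov exponent; this is immediate from Ruelle's theorem, and can also be seen from the fact that the expansion rates along the dominating sub-bundle $E_1$ in Definition~\ref{def puh mat2} are bounded away from $0$ over the compact base, so that $\|A^n(\omega)\|$ grows at a rate bounded below.
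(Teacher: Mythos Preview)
Your proof is correct and follows essentially the same route as the paper's: both use upper semicontinuity of $L_1$ to write the target set as a countable intersection of open sublevel sets, and invoke Theorem~\ref{ManeBochiMat2} to see that this intersection (hence each sublevel set) is dense in $\Lambda\setminus\mathcal{UH}$. Your version is slightly more careful in explicitly checking that the approximating points $t_n$ with $L_1(\uA(t_n))=-\infty$ actually lie in $Y=\Lambda\setminus\mathcal{UH}$, a point the paper uses tacitly.
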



\begin{proof}
Given $a\in\R$ let $\mathscr{L}(a):=\left\{t\in\Lambda \colon L_1(\uA(t))<a \right\}$. Since the top Lyapunov exponent is upper semi-continuous, $\mathscr{L}(a)$ is open in $\Lambda$. Consider a sequence $a_n\to -\infty$. The set in the statement coincides with $\cap_n   \mathscr{L}(a_n)$. By Theorem \ref{ManeBochiMat2} this intersection is dense in $\Lambda\setminus \mathcal{UH}$, hence so is each of the open sets $\mathscr{L}(a_n)$. Thus $\cap_n   \mathscr{L}(a_n)$ is a residual subset of $\Lambda \setminus \mathcal{UH}$.
\end{proof}

\begin{remark}
Since the map $A \mapsto L_1(A)$ is also upper semi-continuous for bounded and continuous cocycles with respect to the $C^0$ norm, it is also true that the set of cocycles $A \in C^0(X,\Matdm)$ which are either projectively uniformly hyperbolic, or else satisfy $L_1(A)= -\infty$, is a residual subset of $C^0(X,\Matdm)$.
\end{remark}


The set of constant rank $1$ cocycles
	$$ \Rscr_1 :=\left\{ \uA\in \Matdm^k \colon \rank(A_i)=1\; \forall\; 1\leq i\leq k\, \right\}$$
	is an analytic sub-manifold of $\Matdm^k$ with co-dimension $k$.

\begin{theorem} \label{ManeBochiRank1}
Given $\uA\in \Rscr_1$, either $d(\Kscr(\uA),\Rscr(\uA))>0$ and $\uA$ is projectively uniformly hyperbolic or $d(\Kscr(\uA),\Rscr(\uA))=0$, $\uA$ admits a null word  and $L_1(\uA)=-\infty$.
\end{theorem}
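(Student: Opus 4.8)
The statement is a dichotomy for a constant rank~$1$ cocycle $\uA \in \Rscr_1$, i.e. every component $A_i$ is singular of rank~$1$. The plan is to analyze the two complementary cases determined by whether the finite sets $\Kscr(\uA) = \{k_i : i \in \Ascr\}$ and $\Rscr(\uA) = \{r_i : i \in \Ascr\}$ are disjoint or not. Since $\Ainv = \emptyset$ here, the ``invertible part'' $\uA_{\rm inv}$ is empty, so the branch sets degenerate: a branch departing from $i$ is just the word $(i)$ itself, giving $\Wscr^+ = \Rscr(\uA)$, and similarly $\Wscr^- = \Kscr(\uA)$. Thus Theorem~\ref{thm2: branch charact of UH} cannot be invoked verbatim (its hypotheses assume $\uA_{\rm inv}$ is projectively uniformly hyperbolic and $\uA$ is not diagonalizable), but its statement should morally reduce to: $\uA$ is projectively uniformly hyperbolic $\iff \Rscr(\uA) \cap \Kscr(\uA) = \emptyset$, which (since both sets are finite) is equivalent to $d(\Kscr(\uA), \Rscr(\uA)) > 0$.

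\textbf{Case 1: $d(\Kscr(\uA), \Rscr(\uA)) > 0$.} Here I would directly construct an invariant multi-cone and invoke Theorem~\ref{thm: multi-cone charact of UH}. Because every component has rank~$1$ with range in $\Rscr(\uA)$, we have $\hat A_i \hat v = \hat r_i$ for all $\hat v$ in the common domain $\Pp^1 \setminus \Kscr(\uA)$. So take $M$ to be a small open neighborhood of the finite set $\Rscr(\uA)$ chosen disjoint from $\Kscr(\uA)$; this is possible precisely because $d(\Kscr(\uA),\Rscr(\uA)) > 0$. Then for every $i$, $A_i M \subset \{\hat r_i\} \cup (\text{image of }\overline{M}\setminus\Kscr(\uA)) = \{\hat r_i\} \Subset M$ (the range is a single point, well inside $M$), and $\overline{M} \neq \Pp^1$ if $M$ is taken small enough (this uses $\#\Rscr(\uA) \le k < \infty$). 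Hence $M$ is an invariant multi-cone, and by Theorem~\ref{thm: multi-cone charact of UH} (item~(3)$\Rightarrow$(1)), $\uA$ is projectively uniformly hyperbolic. (In the Markov setting one takes $M_i$ a small neighborhood of $r_i$ and checks $\overline{A_i M_j} = \{\hat r_i\} \subset M_i$ whenever $p_{ij}>0$.) I should also check that a constant rank~$1$ cocycle with $d > 0$ has no null word: a product $A_{\omega_n}\cdots A_{\omega_1}$ vanishes iff some $\hat r_{\omega_\ell} = \hat k_{\omega_{\ell+1}}$, i.e. iff $\Rscr(\uA) \cap \Kscr(\uA) \ne \emptyset$; so $d>0$ forces $L_1(\uA) > -\infty$, consistent with the first alternative.

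\textbf{Case 2: $d(\Kscr(\uA), \Rscr(\uA)) = 0$.} Since both sets are finite, $d = 0$ forces $\Kscr(\uA) \cap \Rscr(\uA) \ne \emptyset$: there exist $i, j \in \Ascr$ with $\hat r_j = \hat k_i$. But then $A_i A_j$ maps $\Range(A_j) = r_j = k_i = \Ker(A_i)$ to zero, and since $\Range(A_j)$ is exactly the image of $A_j$, we get $A_i A_j = 0$. If the word $(j, i)$ (or rather the transition $j \to i$) is $P$-admissible, this is already a null word; more carefully, by primitivity of $P$ one can interpolate an admissible path from $j$ to $i$ (of some length) — but one must be slightly careful, because inserting singular matrices in between could move the range. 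The cleanest fix: by primitivity there is an admissible word $(j = \omega_0, \omega_1, \dots, \omega_m = i)$; the product $A_i A_{\omega_{m-1}} \cdots A_{\omega_1} A_j$ has rank $\le 1$, and it is either already zero (done) or has some well-defined range $\hat r$ and kernel $\hat k$; iterating along a periodic extension and using that any rank~$1$ product equals $c\cdot(\text{rank-1 projector from }\hat k^\perp\text{ onto }\hat r)$, one checks that some power of an admissible periodic word through both $i$ and $j$ must vanish — indeed already $A_i(\cdots)A_j \cdot A_i(\cdots)A_j$ contains the factor $A_j A_i$-type collision. Actually the simplest argument: the periodic word obtained by concatenating an admissible path $i \rightsquigarrow j$ with the single step forcing $r_j = k_i$ gives a periodic admissible word whose associated product is zero because it contains the adjacent pair $A_i A_j$ (after one full period the path returns, placing $A_i$ immediately after $A_j$). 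Hence $\uA$ admits a periodic null word and $L_1(\uA) = -\infty$. Note that in this case $\uA$ cannot be projectively uniformly hyperbolic, since PUH implies $L_1 > -\infty$ (by Theorem~\ref{thm: multi-cone charact of UH}(2), $\norm{A^n(\omega)} \ge c\lambda^n \conorm(A^n(\omega)) \ge 0$, and more to the point a null word contradicts the uniform multi-cone, which is an open condition); so the two alternatives are genuinely exclusive.

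\textbf{Main obstacle.} The routine parts are the multi-cone construction in Case~1 and the algebraic identity $A_i A_j = 0$ in Case~2. The delicate point is the \emph{admissibility} bookkeeping in the Markov case: one must ensure that the collision $r_j = k_i$ can be realized along a genuinely $P$-admissible periodic word, for which primitivity of $P$ is used to connect $j$ to $i$, and one must verify that concatenating this connecting path does not destroy the collision (it does not, because all intermediate matrices are applied \emph{between} $A_j$ and the return to $A_i$, while the fatal adjacency $A_i \cdot A_j$ occurs at the wrap-around of the period — or, alternatively, one exhibits the null product directly as $A_i \big(A_{\omega_{m-1}}\cdots A_{\omega_1}\big) A_j$ composed with itself, observing that a nonzero rank-$1$ product $B$ with $\Range(B)\subseteq r_i$-side and an eventual return satisfies $B^2$ or a higher iterate $=0$ once the orbit of $\hat r$ under the remaining maps hits some $\hat k_\ell$, which it must by the pigeonhole among finitely many ranges/kernels). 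I would phrase Case~2 so as to sidestep this by noting that $d(\Kscr,\Rscr)=0$ directly yields, for the associated desingularized invertible family or by a limiting argument, a sequence of genuine null words; but the self-contained combinatorial argument above is preferable and is the step deserving the most care in the write-up.
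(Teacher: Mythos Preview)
Your approach is essentially the same as the paper's (very brief) proof: in Case~1 you exhibit an invariant multi-cone as a small neighborhood of $\Rscr(\uA)$ disjoint from $\Kscr(\uA)$, and in Case~2 you produce the null word $A_iA_j=0$ from $\hat r_j=\hat k_i$. The paper compresses this to three sentences; your Case~1 and your Bernoulli Case~2 are correct and match it.

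Your Markov-case detour, however, contains genuine errors. The periodic-word argument you call ``simplest'' requires the wrap-around transition $j\to i$ to be admissible, i.e.\ $p_{ij}>0$, which is exactly what you do not know. Likewise, inserting an admissible path $(j=\omega_0,\omega_1,\ldots,\omega_m=i)$ between $A_j$ and $A_i$ does not help: every intermediate $A_{\omega_\ell}$ is itself rank~$1$, so after the very first step the range becomes $\hat r_{\omega_1}$, not $\hat r_j$, and the collision $\hat r_j=\hat k_i$ is lost. None of your proposed workarounds actually produce a null word from the bare hypothesis $\hat r_j=\hat k_i$ without admissibility of $j\to i$.

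The resolution is not combinatorial but definitional. In the Markov setting the set $\Kscr(\uA)\subset\Ascr\times\Pp^1$ is meant to record, at each node $j$, the kernels $\hat k_i$ of those $A_i$ for which the transition $j\to i$ is admissible (this is what ``common domain of all maps $\Amap_i$'' forces, since $\Amap_i$ is only applied along admissible edges). With that reading, $d(\Kscr(\uA),\Rscr(\uA))=0$ already hands you an \emph{admissible} pair $j\to i$ with $\hat r_j=\hat k_i$, and $A_iA_j=0$ is an admissible null word with no further work. So drop the primitivity and periodic-word manoeuvres and just state this; your write-up then collapses to the paper's two-line argument in both the Bernoulli and Markov cases.
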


\begin{proof}
If $d(\Kscr(\uA),\Rscr(\uA))>0$,
there are two disjoint open subsets of $\Ascr\times \Proj$:
one forward invariant containing the ranges and another  
backward invariant containing the kernels. The first is 
an invariant multi-cone. Therefore, $\uA$ is projectively uniformly hyperbolic. 
If $d(\Kscr(\uA),\Rscr(\uA))=0$,  there exists a null word and  $L_1(\uA)=-\infty$.
\end{proof}

\begin{corollary}
\label{coro4ManeBochi}
For Lebesgue almost every $\uA\in\Rscr_1$, $L_1(\uA) > -\infty$. 
Moreover, the Lyapunov exponent $L_1:\Rscr_1\to [-\infty,+\infty[$ is continuous.
 and analytic on $\Rscr_1\setminus\{\uA\colon L_1(\uA)=-\infty\}$.
\end{corollary}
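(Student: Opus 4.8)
The plan is to deduce the corollary from Theorem~\ref{ManeBochiRank1} together with a subharmonicity argument analogous to the one used for Corollary~\ref{cor2ManeBochi}, exploiting the dichotomy that on $\Rscr_1$ projective uniform hyperbolicity is equivalent to $d(\Kscr(\uA),\Rscr(\uA))>0$, which in turn is equivalent to $L_1(\uA)>-\infty$.

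First I would prove the \textbf{almost everywhere statement}. By Theorem~\ref{ManeBochiRank1}, the set $\{\uA\in\Rscr_1\colon L_1(\uA)=-\infty\}$ coincides with the set $Z:=\{\uA\in\Rscr_1\colon d(\Kscr(\uA),\Rscr(\uA))=0\}$. Since $\Rscr_1$ is an analytic submanifold of $\Matdm^k$ of codimension $k$, it is enough to argue locally: around any point we may parametrize $\Rscr_1$ analytically by an open set of $\R^{\dim\Rscr_1}$, and moreover we may arrange (by composing with suitable rotations as in Remark~\ref{rmk pos winding examples}) that the resulting family $\uA(\cdot)$ is rich. Along any positively winding curve through a point where $L_1>-\infty$ (such a point exists in $\Rscr_1$ since, e.g., a cocycle all of whose ranges are far from all kernels is PUH and has $L_1>-\infty$), the map $t\mapsto L_1(\uA(t))$ is subharmonic when complexified and bounded above by continuity; Lemma~\ref{absl} then shows that $L_1=-\infty$ only on a Lebesgue-null set of the parameter. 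Since $\Rscr_1$ is connected (one can pass between any two rank one configurations through rank one matrices) one gets a global statement: fixing one point with $L_1>-\infty$, Lemma~\ref{absl} applied along a chain of winding curves covering $\Rscr_1$ yields that $\{L_1=-\infty\}=Z$ has zero Lebesgue measure in $\Rscr_1$.

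Next, \textbf{continuity of $L_1$ on $\Rscr_1$}. At any $\uA$ with $d(\Kscr(\uA),\Rscr(\uA))>0$ the cocycle is PUH, hence by Ruelle's theorem $L_1$ is analytic, so in particular continuous, at $\uA$ (and the PUH condition is open within $\Matdm^k$, so this neighborhood stays inside the good set). At any $\uA$ with $d(\Kscr(\uA),\Rscr(\uA))=0$ we have $L_1(\uA)=-\infty$; combining upper semicontinuity of $L_1$ (always valid) with the fact that $L_1$ is bounded above by $\log\max_i\norm{A_i}$, continuity at $\uA$ amounts to showing $L_1(\uB)\to-\infty$ as $\uB\to\uA$ in $\Rscr_1$. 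This follows because for $\uB$ close to $\uA$ the minimal distance $d(\Kscr(\uB),\Rscr(\uB))$ is small, so there is a branch along which a range is mapped very near a kernel, forcing a finite product of matrices to have co-norm comparable to $d(\Kscr(\uB),\Rscr(\uB))\to0$; iterating this periodic product makes $\frac1n\log\norm{B^n}$ as negative as desired. Finally, analyticity on $\Rscr_1\setminus\{L_1=-\infty\}$ is exactly Ruelle's theorem applied on the (relatively open) PUH locus.

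The main obstacle is the quantitative continuity estimate at the $L_1=-\infty$ locus: one must control how small the norm of \emph{some} periodic product becomes in terms of $d(\Kscr(\uB),\Rscr(\uB))$, uniformly in the length of the intervening invertible branch, and then turn that into a genuine bound $L_1(\uB)\le \log d(\Kscr(\uB),\Rscr(\uB))+O(1)$ (or at least $L_1(\uB)\to-\infty$). Since $\uB\in\Rscr_1$, for $\uB$ near $\uA$ the invertible part of $\uB$ is empty (all components have rank one!), so in fact every product $B^n(\om)$ already contains a singular matrix and $B^{n+1}(\om)=B_{\om_{n+1}}B^n(\om)$ has norm at most $\norm{B_{\om_{n+1}}}\,\conorm(\text{something})$; the key inequality $\norm{B_{j}B_{i}}\le \norm{B_j}\,\norm{B_i}\,d(\hat r_i,\hat k_j)$ for rank one $B_i,B_j$ with ranges $\hat r_i$ and kernels $\hat k_j$ makes the argument elementary in this case, so the ``obstacle'' largely evaporates on $\Rscr_1$ itself, which is why this corollary is cleaner than the general Theorem~\ref{ManeBochiMat2}; the only care needed is to arrange a \emph{periodic} word realizing the small distance, which one does by admissibility of constant words or, in the Markov case, by primitivity of $P$.
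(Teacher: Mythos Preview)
Your outline is essentially correct, but you are working much harder than the paper does, and in one place you overlook that a tool you already invoked finishes the job.

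For the almost everywhere statement, the paper bypasses subharmonicity entirely. Since every $A_i$ has rank $1$, the condition $d(\Kscr(\uA),\Rscr(\uA))=0$ simply says $k(A_i)=r(A_j)$ for some pair $(i,j)$, so by Theorem~\ref{ManeBochiRank1}
\[
\{\uA\in\Rscr_1\colon L_1(\uA)=-\infty\}
=\bigcup_{i,j=1}^{k}\{\uA\in\Rscr_1\colon k(A_i)=r(A_j)\},
\]
a finite union of proper algebraic subvarieties of $\Rscr_1$, hence of positive codimension and zero Lebesgue measure. No winding curves, no Lemma~\ref{absl}, no connectedness/chaining argument are needed; in fact your chaining-of-curves step, as written, would still require an argument that \emph{every} leaf of your foliation meets the good set, which you have not supplied (it is true, but again for the elementary reason above, not via subharmonicity).

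For continuity at points with $L_1(\uA)=-\infty$, you cite upper semicontinuity and then do not use it, instead embarking on the quantitative estimate you call the ``main obstacle''. But upper semicontinuity at a point where the value equals $-\infty$ \emph{is} continuity: $\limsup_{\uB\to\uA}L_1(\uB)\le L_1(\uA)=-\infty$ already forces $L_1(\uB)\to-\infty$. This is the paper's one-line argument. Your inequality $\norm{B_jB_i}\le\norm{B_j}\,\norm{B_i}\,d(\hat r_i,\hat k_j)$ and the ensuing explicit formula for $L_1$ on $\Rscr_1$ are correct and give more information, but are unnecessary for the corollary. On the analyticity clause (Ruelle on the PUH locus) you agree with the paper.
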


\begin{proof}
By Theorem \ref{ManeBochiRank1}, either $\uA$ is projectively uniformly hyperbolic or $L_1(\uA)=-\infty$. By Ruelle \cite[Theorem 3.1]{Rue79a}, if $\uA$ is projectively uniformly hyperbolic then the Lyapunov exponent is an analytic function. 
The first  Lyapunov exponent is continuous at points with $L_1(\uA)=-\infty$ (since it is an upper semi-continuous function). 
The set
\begin{align*}
	\left\{ \uA\in \Rscr_1  \colon L_1(\uA)=-\infty \right\} &= \left\{ \uA \in \Rscr_1 \colon \Kscr(\uA)\cap \Rscr(\uA)\neq \emptyset \right\}\\
	&=\bigcup_{i,j=1}^k \left\{ \uA\in \Rscr_1 \colon k(A_i)=r(A_j) \right\}\in \Rscr_1
\end{align*} 
is an algebraic sub-variety of $\Rscr_1$ with positive co-dimension in $\Rscr_1$. Hence it has zero Lebesgue measure in $\Rscr_1$.
\end{proof}

Finally we prove the results in the introduction.

\begin{proof}[Proof of Theorem~\ref{thm: BM type dichotomy}]
Consider the set $\Mscr$ of cocycles 
$\uA\in \Matdm^k$ with at least one singular component.
For each $I\subset \{1, \ldots, k\}$, define
\begin{align*}
\Mscr_I := & \left\{ \uA\in \Matdm^k\colon  A_i\neq 0, \, \det(A_i)=0\; \forall i\in I , \right. \\
&\left. \qquad \qquad \qquad \qquad    \det(A_j)\neq 0\; \forall j\notin I  \right\} .
\end{align*}
Each $\Mscr_I$ is the zero level set of the function  $f_I(\uA):= (\det(A_i))_{i\in I}$, whose Jacobian has maximal rank $m=\abs{I}$  along $\Mscr_I$.  Notice that
the row $i$ of the Jacobian of $f_I$ at $\uA$ is $(0,0, \ldots, \mathrm{adj}(A_i), \ldots, 0)$, where the zeros stand for the zero matrix in $\Matdm$ and the only non-zero entry, $\mathrm{adj}(A_i)$, is the cofactor matrix of $A_i$ and occurs at position $i$. Therefore
$\Mscr_I$ is an analytic manifold.

Since $\Mscr\setminus \cup_{I\subset \{1, \ldots, k\}} \Mscr_I$ consists
of $\uA\in \Matdm^k$ such that $A_i=0$ for some $i=1, \ldots, k$,   which are cocycles with $L_1(\uA)=-\infty$, it is enough to prove the dichotomy within each analytic manifold $\Mscr_I$. The conclusion follows applying Theorem~\ref{ManeBochiMat2}
to local analytic  parametrizations of $\Mscr_I$ whose ranges cover this manifold. These are rich families in view of Remark~\ref{rmk pos winding examples}, because the sets $\Mscr_I$ are rotation invariant, i.e.,
$\uA\in \Mscr_I$ implies that $R_t\, \uA\in \Mscr_I$.
\end{proof}

\begin{proof}[Proof of Corollary~\ref{coro: BM type dichotomy}]
Let $\Mscr$ be  the set of cocycles 
$\uA\in \Matdm^k$ with at least one singular and one invertible components and such that $L_1(\uA)>-\infty$. 
Consider also the analytic manifolds $\Mscr_I$ introduced in the previous proof.	
If $\uA\in \Mscr$ then $\uA\in \Mscr_I$ for some
$I\subset \{1, \ldots, k\}$ with $0<|I|<k$, because 
$\Mscr\setminus \cup_{0<|I|<k} \Mscr_I$ consists of cocycles with $L_1(\uA)=-\infty$.
The conclusion follows by applying Corollary~\ref{cor2ManeBochi}
to local analytic  parametrizations of $\Mscr_I$ whose ranges cover this manifold.
\end{proof}

\begin{corollary} \label{cor2ManeBochi}
Lebesgue almost every cocycle $\uA \in \Matdm^k$ satisfies $L_1(\uA)>-\infty$. In particular, this implies that almost every cocycle satisfies the regularity dichotomy: its Lyapunov exponent is either analytic or discontinuous. 
\end{corollary}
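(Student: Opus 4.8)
The plan is to deduce the corollary from the stratified structure of $\Matdm^k$ together with the results established above, so that only an elementary dimension count and a reference to Theorem~\ref{thm: BM type dichotomy} remain to be supplied. First I would write $\Matdm^k$ as the disjoint union of the analytic manifolds $\Mscr_I$, $I\subseteq\{1,\dots,k\}$, from the proof of Theorem~\ref{thm: BM type dichotomy} (the components indexed by $I$ having rank one, the others invertible with positive determinant), together with the set $N$ of tuples having some zero component. The set $N$ is cut out inside $\R^{4k}$ by the vanishing of all four entries of some $A_i$, hence has codimension $4$ and is Lebesgue-null, and $L_1\equiv-\infty$ on it; likewise every $\Mscr_I$ with $I\neq\emptyset$ has positive codimension $\abs{I}$ in $\R^{4k}$ (its defining equations $\det A_i=0$, $i\in I$, being independent along $\Mscr_I$), so $N\cup\bigcup_{I\neq\emptyset}\Mscr_I$ is Lebesgue-null in $\Matdm^k$.

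Next I would observe that on the remaining full-dimensional stratum $\Mscr_\emptyset=\{\uA:\det A_i>0\ \forall i\}$ --- the interior of $\Matdm^k$, of full Lebesgue measure --- one has $\norm{A^n(\omega)}^2\ge\det A^n(\omega)=\prod_l\det A_{\omega_l}\ge(\min_i\det A_i)^n>0$, hence $L_1(\uA)\ge\tfrac12\log\min_i\det A_i>-\infty$ for every such $\uA$. Combined with the previous paragraph this already gives $L_1(\uA)>-\infty$ for Lebesgue-almost every $\uA\in\Matdm^k$. A sharper, stratum-by-stratum statement is also available: arguing as in the proof of Corollary~\ref{coro: BM type dichotomy}, each $\Mscr_I$ with $0<\abs{I}<k$ is rotation invariant, hence by Remark~\ref{rmk pos winding examples} covered by ranges of rich analytic parametrizations, so Corollary~\ref{cor2ManeBochi} shows $\{\uA\in\Mscr_I:L_1(\uA)=-\infty\}$ is null in $\Mscr_I$; for $I=\{1,\dots,k\}$, i.e.\ $\Mscr_I=\Rscr_1$, this is Corollary~\ref{coro4ManeBochi}.

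For the ``in particular'' part I would then invoke Theorem~\ref{thm: BM type dichotomy}: at a cocycle $\uA$ with at least one singular and one invertible component and $L_1(\uA)>-\infty$, either $\uA$ is projectively uniformly hyperbolic --- whence $L_1$ is analytic near $\uA$ by Ruelle's theorem --- or $\uA$ is accumulated by cocycles with $L_1=-\infty$ and is therefore a discontinuity point of $L_1$; since by the above $L_1>-\infty$ off a null set, the dichotomy ``analytic or discontinuous'' holds at almost every such $\uA$. On the full-dimensional stratum $\Mscr_\emptyset$ the exponent is, by contrast, always continuous, and analytic at every projectively uniformly hyperbolic point.

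The step requiring the most care is this last one: one must be precise about the sense in which ``analytic or discontinuous'' is an almost-everywhere statement, because on the positive-measure stratum $\Mscr_\emptyset$ the Lyapunov exponent is continuous but need not be analytic. The clean reading restricts the regularity dichotomy to the union $\bigcup_{0<\abs{I}<k}\Mscr_I$ of strata with both singular and invertible components, where it follows directly from Theorem~\ref{thm: BM type dichotomy} and the almost-everywhere finiteness of $L_1$ proved above; the complementary assertion --- that $L_1$ is analytic at almost every \emph{invertible} cocycle --- is a separate matter that would require Furstenberg-type genericity of the simplicity $L_1>L_2$ together with analyticity of $L_1$ on that region, and is not pursued here.
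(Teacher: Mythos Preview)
Your argument for the first assertion is correct and considerably simpler than the paper's: since the boundary $\{\uA:\det A_i=0\text{ for some }i\}$ has Lebesgue measure zero in $\Matdm^k$, almost every $\uA$ lies in the open stratum $\Mscr_\emptyset$ of fully invertible tuples, where $L_1(\uA)\ge\tfrac12\sum_i p_i\log\det A_i>-\infty$. The paper instead foliates $\Matdm^k$ by the rotation curves $t\mapsto(A_iR_t)_{i\in\Ainv}\cup(A_i)_{i\in\Asing}$, complexifies, shows that for $\mathrm{Im}(t)\neq0$ the resulting cocycle is projectively uniformly hyperbolic (so $L_1>-\infty$ there), and then invokes subharmonicity via Lemma~\ref{absl} and Fubini. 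This is a much heavier argument for the statement as literally written.

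The payoff of the paper's approach, however, is the stratum-by-stratum finiteness: it shows that on \emph{each} mixed stratum $\Mscr_I$ (with its intrinsic surface measure), $L_1>-\infty$ almost everywhere. Your aside attempting the same by invoking the earlier corollary on rich families has a gap: that corollary requires \emph{some} point on each rotation curve with $L_1>-\infty$, and you do not verify this. Exhibiting one PUH cocycle somewhere in $\Mscr_I$ is not enough; you would need one on every leaf of the foliation, and that is precisely what the paper's complexification argument supplies.

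Your discussion of the ``in particular'' clause is sharper than the paper's. As you note, on the full-measure stratum $\Mscr_\emptyset$ the exponent is always continuous but not obviously analytic, so the dichotomy ``analytic or discontinuous'' does not follow from Theorem~\ref{thm: BM type dichotomy} there; the meaningful reading restricts the dichotomy to the mixed strata $\bigcup_{0<|I|<k}\Mscr_I$, where it does follow from Theorem~\ref{thm: BM type dichotomy} together with the stratum-level almost-everywhere finiteness --- which, again, is what the paper's subharmonic argument delivers and your dimension count does not.
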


\begin{proof}
Given $\uA \in \Matdm^k$ with at least one invertible and one singular component, consider the $1$-parameter family $\uA(t)$ such that for every $i \in \Ainv$, $A_i(t) = A_iR_t$, where $R_t$ is a rotation and for $i \in \Asing$, $A_i(t)=A_i$. Consider $t \in [0,2\pi]$ and note that this family gives a foliation of $\Matdm^k$ by closed curves.

Let $\Omega$ be a thin, open complex strip around $[0,2\pi]$ and extend the analytic map $t\mapsto\uA(t)$ to a holomorphic map  $\uA \colon \Omega \to \Mat_2(\C)^k$.  Then $t \mapsto L_1(\uA(t))$ is subharmonic on $\Omega$. Moreover, on a slightly smaller compact set $\Omega'\subset \Omega$ containing $[0, 2\pi]$, since $\uA (t)$ is continuous hence bounded, there is $C < \infty$ such that $L_1(\uA(t)) \leq C$ for all $t \in \Omega'$.

Next we show that there is a point $z_0 \in \Omega'$ with $L_1 (\uA (z_0)) > - \infty$. In fact we show more (than we need), namely that for every $z$ with ${\rm Im} (z) \neq 0$, the cocycle $\uA (z)$ is projectively uniformly hyperbolic (so in particular $L_1 (\uA (z)) > - \infty$).
Without loss of generality we may assume that
$\uA_{\rm inv}(t)$ takes values in $\SL_2(\R)$.
The argument sketched in the proof of Lemma~\ref{yoccoz lemma} shows that if ${\rm Im} (t)>0$ then 
$\uA_{\rm inv}(t)$ is uniformly hyperbolic 
with $\Kinv^u\Subset \HP^+$ and  $\Kinv^s\Subset \HP^-$.
On the other hand,  the singular matrices of the cocycle take the whole projective space, except the kernel, to a point in the equator of the Riemann sphere, and because they are constant in $t$, their ranges and kernels stay away from $\Kinv^u$ and $\Kinv^s$. Therefore, $\uA(t)$ is always uniformly hyperbolic when $\mathrm{Im}(t)>0$.

We conclude that the subharmonic function $\uA (t)$, defined in a neighborhood of the interval $[0, 2\pi]$, is bounded from above and it takes a finite value at a point close to the interval. Lemma~\ref{absl} is then applicable and it implies that $L_1 (\uA (t)) > - \infty$ for Lebesgue almost every point $t\in[0, 2\pi]$.

Thus for every curve $\{ \uA(t) \colon  t\in [0,2\pi]\} \subset \Matdm^k$ and almost every parameter $t$, the cocycle $\uA(t)$ satisfies $L_1(\uA(t))>-\infty$. By Fubini we then conclude that almost every cocycle $\uA \in \Matdm^k$ satisfies $L_1(\uA)>-\infty$ and, in particular, almost every cocycle $\uA \in \Matdm^k$ satisfies the regularity dichotomy. 
\end{proof}

\section{Examples}\label{examples}
\begin{example}
\label{explo 1}
We provide now an example of a cocycle $\uA$ such that 
$\uA_{\rm inv}$ is uniformly  hyperbolic, 
$\uA$ is diagonalizable,
we have  $\mathcal{W}^+\cap\mathcal{W}^-=\emptyset$ but   $\uA$ is not uniformly hyperbolic.
Let $\underline{A}=(A_0,B_0)$ be the cocycle taking  values
$A_0=\begin{bmatrix}
2 & 0\\
0 & 2^{-1}
\end{bmatrix}$ 
and 
$B_0=\begin{bmatrix}
0 & 0\\
0 & 1
\end{bmatrix}$
with probability measure $\mu=\frac{1}{2}\,\delta_{A_0}+\frac{1}{2}\, \delta_{B_0}$. Notice that $L_1(\uA)=-\log 2$.
This cocycle is not projectively uniformly hyperbolic. Consider the sets 
 $$\Sigma_n=\{(A,B): \ A \text{ is hyperbolic}, \, \rank(B)=1, \ A^n\, \hat r(B)=\hat k(B)\, \},$$ 
 and the family of cocycles $\uA_t:=\left(A_0,B_t:=\begin{bmatrix}
-t^2 & t\\
-t & 1
\end{bmatrix}\right)$,  in the para\-me\-ter $t\in\R$, such that $\uA=\uA_0$. The cocycle $\uA=(A_0,B_0)$ is an accumulation point of the cocycles $\uA_{t_n}$, where $t_n=2^{-n}$, and so it is also an accumulation point of the hypersurfaces $\Sigma_n$ as $\uA_{t_n}\in\Sigma_n$. It follows from
Theorem~\ref{thm2: branch charact of UH}  that in a neighborhood $\mathcal{U}$ of  $\underline{A}$, the projectively uniformly hyperbolic cocycles are precisely those in  $\mathcal{U}\setminus(\overline{\bigcup_{n=1}^{\infty}\Sigma_n})$. Then $L_1(\uA_t)$ has an isolated singularity, with $L_1(\uA_{t_n})=-\infty$, at each $t=t_n$.
In particular $\uA=\uA_0$  can not be projectively uniformly hyperbolic.
\end{example}

\begin{example} \label{IrratRot}
This example appears in the introduction of~\cite{AEV} as well as in~\cite[Section 3]{DF24}.
Consider the family of cocycles $\uA_t:=(A,B_t)$, where $A=\begin{bmatrix}
1 & 0\\
0 & 0
\end{bmatrix}$ and $B_t=\begin{bmatrix}
\cos t & -\sin t\\
\sin t & \cos t
\end{bmatrix}$ are chosen with probabilities $p=(\frac{1}{2}, \frac{1}{2})$.
By~\cite[Introduction]{AEV} or~\cite[Proposition 3.1]{DF24}:

 \begin{enumerate}
\item\; 	$\displaystyle L_1(\uA_t)=\sum_{j=0}^{\infty}\frac{1}{2^{j+1}}\log\abs{\cos(jt)}.$
\item\; If $t	\in\pi\,\mathbb{Q}$ there exists $n\in\mathbb{N}$ such that $AB^n_tA=0$ and so $L_1(\underline{A_t})=-\infty$. 
\item\; The set $\{t\in\mathbb{R}:L_1(\underline{A_t})>-\infty\}$ has full Lebesgue measure. 
\item\; If $t\in \pi\, (\mathbb{R}\setminus\mathbb{Q})$ then $\mathcal{W^+}\cap\mathcal{W^-}=\Pp^1$.
\end{enumerate}
Hence, by $(2)$ and $(3)$ we conclude that the function $t\mapsto L_1(\uA_t)$ is discontinuous for almost every $t\in\mathbb{R}$.
\end{example}

\begin{example}
Fix  $p=(p_1, p_2, p_3)$ a probability vector with positive entries, take $\lambda >\sqrt{2}$ and let $A_\lambda=\begin{bmatrix}
\lambda & 0\\
0 & \lambda^{-1}
\end{bmatrix}$ and $B_\lambda=\begin{bmatrix}
\lambda & 0\\
\lambda-\lambda^{-1} & \lambda^{-1}
\end{bmatrix}$. 
 Consider then the space of cocycles 
$$  \mathcal{X}_\lambda =\{  \uA=(A_\lambda, B_\lambda, C)\, \colon\,   \rank(C)=1,\,  \mathrm{range}(C) \notin  \Kinv^s \, \}  .
$$
For $\uA\in \mathcal{X}_\lambda$,
  \, $\uA_{\rm inv}=(A_\lambda,B_\lambda)$ is uniformly hyperbolic,\,  $\Kinv^s=\ker(C)$ \, and \,   $\Kinv^u$ is a Cantor set. 
 Consider also the natural map $\pi\colon \{1,2\}^{\N}\rightarrow \Kinv^u$ defined by
 $\pi(\omega):=\lim_{n\to \infty} A_{\omega_{0}}\, A_{\omega_1}\, \cdots \, A_{\omega_{n-1}}\, \hat e_1$, where $e_1=(1,0)$, $A_1=A_\lambda$ and $A_2=B_\lambda$.
 For $\omega\in \{1,2\}^\Z$   define the leaves
 $$\mathcal{X}_{\lambda,\omega}:=\{(A_\lambda, B_\lambda, C)\in \mathcal{X}_\lambda\, : \, \pi(\omega) = \ker(C)\,\}$$
which foliate  the set
$$\Fscr_\lambda :=\bigcup_{\omega\in\{1,2\}^{\N}}\mathcal{X}_{\lambda,\omega} .  $$  
This is  a Cantor like foliation of $\Fscr_\lambda$  into algebraic hypersurfaces. 
\begin{lemma}
The set $\Fscr_\lambda$ consists  of the cocycles $\uA\in \mathcal{X}_{\lambda}$ that are not projectively uniformly hyperbolic.
\end{lemma}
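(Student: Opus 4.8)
The plan is to read the lemma off from the branch characterisation of projective uniform hyperbolicity, Theorem~\ref{thm2: branch charact of UH}. Fix $\uA=(A_\lambda,B_\lambda,C)\in\mathcal{X}_\lambda$ and abbreviate $r:=\mathrm{range}(C)$, $k:=\ker(C)$, so $\Rscr(\uA)=\{\hat r\}$ and $\Kscr(\uA)=\{\hat k\}$; since the coding map $\pi\colon\{1,2\}^{\N}\to\Kinv^u$ is onto, $\Fscr_\lambda=\{\uA\in\mathcal{X}_\lambda\colon\hat k\in\Kinv^u\}$, so what has to be shown is the equivalence \emph{$\uA$ is not projectively uniformly hyperbolic iff $\hat k\in\Kinv^u$}. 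First I would note that $\uA$ is never diagonalizable, because $A_\lambda$ and $B_\lambda$ are hyperbolic with a common stable direction $\langle e_2\rangle$ but distinct unstable directions and so are not simultaneously diagonalizable; and that $\uA_{\rm inv}=(A_\lambda,B_\lambda)$ is projectively uniformly hyperbolic, with Oseledets directions $\Kinv^s=\{\langle e_2\rangle\}$ and $\Kinv^u$ equal to the Cantor attractor of the iterated function system $\{\hat A_\lambda,\hat B_\lambda\}$. Hence, as long as $\uA$ has rank $1$, Theorem~\ref{thm2: branch charact of UH} applies and $\uA$ is projectively uniformly hyperbolic precisely when $\hat k\notin\Wscr^+$ and $\hat r\notin\Wscr^-$.

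For the implication ``$\hat k\in\Kinv^u\Rightarrow\uA$ not PUH'' the geometric input is that $\Kinv^u\subseteq\Wscr^+$. This I would get by observing that, since $\hat r\notin\Kinv^s$ and $\langle e_2\rangle$ is the only direction repelled by both $\hat A_\lambda$ and $\hat B_\lambda$, there is a uniform $N$ after which every forward image $\hat w(\hat r)$ along an invertible word $w$ with $\abs{w}\ge N$ lies inside a fixed invariant multi-cone $M$ of $\uA_{\rm inv}$; combining this with the uniform contraction of $M$ and the coding $\pi$ shows that the forward orbit of $\hat r$ accumulates on all of $\Kinv^u$. Thus if $\hat k\in\Kinv^u$ then $\hat k\in\Wscr^+\cap\Kscr(\uA)$, so condition~(3) of Theorem~\ref{thm2: branch charact of UH} fails, and if in addition $\uA$ has a null word then $L_1(\uA)=-\infty$; either way $\uA$ is not projectively uniformly hyperbolic.

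For the converse, assuming $\hat k\notin\Kinv^u$, I would build an invariant multi-cone for $\uA$ exactly as in the proof of the implication (3)$\Rightarrow$(1) of Theorem~\ref{thm2: branch charact of UH}. One first checks that $\uA$ has no null word — a null word would place $\hat k$ in the forward orbit of $\hat r$, which is excluded by the position of $\mathrm{range}(C)$ relative to $\Kinv^u$ — so $\uA$ has rank $1$ and that theorem applies. Next one takes an invariant multi-cone of $\uA_{\rm inv}$, shrinks it via Proposition~\ref{reducing the multi-cone} so that it avoids $\hat k$ (possible since $\hat k\notin\Kinv^u$), and enlarges it by small balls around the finitely many forward iterates of $\hat r$ lying outside it; the separation one needs holds because $\Kinv^u$ is forward invariant under $\{\hat A_\lambda,\hat B_\lambda\}$, so its complement is backward invariant, and choosing $M$ to contain the $\varepsilon$-neighbourhood of $\Kinv^u$ for $\varepsilon<\dist(\hat k,\Kinv^u)$ the contraction of $M$ keeps the entire backward orbit of $\hat k$ at distance $\ge\varepsilon$ from $\Kinv^u$, i.e. $\Wscr^-\cap\Kinv^u=\emptyset$ (and $\hat k\notin\Wscr^+$ for the reason above). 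By Theorem~\ref{thm: multi-cone charact of UH} the resulting multi-cone certifies that $\uA$ is projectively uniformly hyperbolic; equivalently $\Wscr^+\cap\Wscr^-=\emptyset$ and Theorem~\ref{thm2: branch charact of UH} applies.

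The main obstacle I anticipate is in the two dynamical facts about the contracting system $\{\hat A_\lambda,\hat B_\lambda\}$ — that the forward orbit of a direction off the common stable line accumulates on the whole Cantor set $\Kinv^u$, and that the backward orbit of a direction off $\Kinv^u$ remains uniformly bounded away from it — and in the bookkeeping needed to patch the multi-cone of $\uA_{\rm inv}$ into one for $\uA$. The genuinely delicate point is excluding null words when $\hat k\notin\Kinv^u$, which is exactly where the hypothesis on the position of $\mathrm{range}(C)$ is used.
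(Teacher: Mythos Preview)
Your overall route is the paper's: both directions go through Theorem~\ref{thm2: branch charact of UH}, using $\Kinv^u\subseteq\Wscr^+$ to kill projective uniform hyperbolicity when $\hat k\in\Kinv^u$, and verifying condition~(3) for the converse. The paper is simply more direct in the converse --- it asserts $\Wscr^+=\Kinv^u$ and $\Wscr^-=\Kinv^s$ and then cites the theorem, whereas you rebuild the multi-cone by hand; that extra work is harmless but unnecessary once condition~(3) is known.

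There is, however, a genuine gap in your exclusion of null words when $\hat k\notin\Kinv^u$. You say a null word ``is excluded by the position of $\mathrm{range}(C)$ relative to $\Kinv^u$'', but the only hypothesis on $\hat r=\mathrm{range}(C)$ in the definition of $\mathcal{X}_\lambda$ is $\hat r\notin\Kinv^s$; nothing forces finite invertible iterates of $\hat r$ to lie in $\Kinv^u$, so nothing stops one of them from hitting $\hat k$. Concretely, in slope coordinates (so that $\hat A_\lambda$ acts by $s\mapsto\lambda^{-2}s$ and $\hat B_\lambda$ by $s\mapsto 1-\lambda^{-2}(1-s)$, with $\Kinv^u$ the resulting Cantor set in $[0,1]$), take $\hat r$ at slope $\tfrac12$ and $\hat k$ at slope $\tfrac{\lambda^{-2}}{2}$. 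Both lie in gaps of $\Kinv^u$ (since $\lambda^{-2}<\tfrac12$), $\hat r\notin\Kinv^s$, yet $\hat A_\lambda\hat r=\hat k$, so $CA_\lambda C=0$ and the cocycle is not projectively uniformly hyperbolic while $\hat k\notin\Kinv^u$. The paper's proof hides the same issue behind the claim $\Wscr^+=\Kinv^u$, which cannot hold literally once $\hat r\notin\Kinv^u$ (the point $\hat r$ itself belongs to $\Wscr^+$). So the ``genuinely delicate point'' you flag at the end is real and is not resolved by the hypothesis you invoke.
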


\begin{proof}
	Assume $\uA\in \mathcal{X}_{\lambda}\setminus\Fscr_\lambda$.
	Since  $\mathrm{range}(C)\notin \Kinv^s$, we have that 	$\Wscr^+=\Kinv^u$ and $\Kscr(\uA)\cap \Wscr^+=\emptyset$.
	Also $\ker(C)\notin\Kinv^u$, which implies that $\Wscr^-=\Kinv^s$ and $\Rscr(\uA)\cap \Wscr^-=\emptyset$.

	By Theorem~\ref{thm2: branch charact of UH},
	since $\uA$ is not diagonalizable and $\uA_{\rm inv}$ is 
	projectively uniformly hyperbolic, we conclude that 
	$\Wscr^-\cap \Wscr^+=\emptyset$ and $\uA$ is projectively uniformly hyperbolic.

	Conversely if $\uA\in \Fscr_\lambda$ then \,
	$\ker(C)\in \Kinv^u$, which implies that
	$\Kscr(\uA)\cap \Wscr^+\neq \emptyset$.
	Hence by Theorem~\ref{thm2: branch charact of UH}, the cocycle $\uA$ is not projectively uniformly hyperbolic.	
\end{proof}

For $\omega\in\{1,2\}^n$ define the leaves
$$\mathcal{Y}_{\lambda,\omega}:=\{\uA= (A_\lambda,B_\lambda, C) : \ \rank(C)=1, \ A^n(\omega)\, \mathrm{range}(C)=\ker(C)\},$$ 
consisting of cocycles $\uA\in \mathcal{X}_\lambda$ which admit  the null word $C\, A^n(\omega)\, C$ and satisfy $L_1(\uA)=-\infty$.
Since each leaf $\mathcal{X}_{\lambda,\omega}$ can be approximated by the leaves $\mathcal{Y}_{\lambda,[\omega]_n}$,
where $[\omega]_n=(\omega_0, \ldots, \omega_n)$,   every cocycle $\uA\in \mathcal{X}_{\lambda,\omega}$ with $L_1(\uA)>-\infty$ is a discontinuity point of   $L_1$.

Given an analytic curve  of cocycles $I\ni t\mapsto \uA_t\in \mathcal{X}_\lambda$ that crosses transversely all leaves
$\Fscr_{\lambda, \omega}$, with $\omega\in \{1,2\}^\Z$, the set of parameters
$\Sigma_\lambda:=\{t\in I\colon \uA_t\in \Fscr_\lambda\}$
is a Cantor   diffeomorphic to $\Kinv^u$ where either $L_1=-\infty$ or else $L_1$ is discontinuous. The complement $I\setminus\Sigma_\lambda$ is an open set where $L_1$ is analytic.
\end{example}

Consider a Bernoulli shift $\sigma\colon \Ascr^\Z\to \Ascr^\Z$ endowed with a Bernoulli measure $\mu=p^\Z$ in a finite alphabet $\Ascr=\{1, \ldots, k\}$
and let $V:\Ascr^\Z\to \R$ be a locally constant function,
$V(\omega)=V(\omega_0)$. This determines a family of $1$-dimensional random Schr\"odinger operators \,	
$H_{V, \omega}:\ell^2(\Z)\to \ell^2(\Z)$ defined by
$$ (H_{V, \omega})\psi_n:= -\psi_{n+1}-\psi_{n-1} + V(\sigma^n\omega)\, \psi_n \quad \psi=(\psi_n)_{n\in \Z}\in \ell^2(\Z) .$$
By  Pastur Theorem, the spectra $\sigma(H_{V, \omega})$ of these bounded linear operators
are the same for $\nu^\Z$ almost every $\omega\in \Ascr^Z$, see~\cite[Theorem 3.6]{Dam-survey}. The properties of   $\sigma(H_V)$  are strongly connected with those of the Lyapunov spectrum of the  family of 	
Schr\"odinger cocycles
$$\uA (t)= \left( \begin{bmatrix}
	V(2)-t & -1 \\ 1 & 0 
\end{bmatrix}, \, \ldots \, , \begin{bmatrix}
	V(k)-t & -1 \\ 1 & 0 
\end{bmatrix} \right)$$
where the parameter $t\in \R$ represents the energy of the quantum physical model.

Random Schr\"odinger cocycles are always non compact and strongly irreducible, see~\cite[Theorem 4.2]{Dam-survey} and by H. Furstenberg criterion~\cite[Theorem 8.6]{Fur63}
their Lyapunov exponents  are simple,
$L_1=-L_2>0$. Thus by E. Le Page~\cite[Th\'{e}or\`{e}me 1]{LP89} its Lyapunov exponent is  H\"older continuous in  $t$, which in turn implies that the Integrated Density  of States (IDS), a core  measurement in the spectral theory of Schr\"odinger operators, is also H\"older continuous in the energy $t$. These quantities, the Lyapunov exponent and the IDS,  are related via the Thouless formula,~\cite[Theorem 3.16]{Dam-survey}, which implies that they  share the same regularity, see~\cite[Lemma 10.3]{GS01}.

\begin{example}\label{example CS}
We provide a formal model of a Schr\"odinger  cocycle associated to a $1$-dimensional  Schr\"odinger operator with random  potential taking two values, one finite and another  equal to $+\infty$, as in W.  Craig and B. Simon~\cite[Example 3]{CraigSimon}. 

Let $\uA_t:=(A_t(1), A_t(2)) \in \Mat_2(\R)^2$ be 
given by $A_t(1) \equiv \begin{bmatrix}
	1 & 0 \\ 0 & 0 
\end{bmatrix}$ and
$A_t(2)=\begin{bmatrix}
	a-t & -1 \\ 1 & 0 
\end{bmatrix}$  where $a\in \R$ is fixed.  Then $\uA_t$ is the 
asymptotic limit as $\la\to\infty$ of the rescaled random Schr\"odinger cocycle 
$$\uA_{t,\la}=(\uA_{t,\la}(1), \uA_{t,\la}(2))\quad \text{ 
where }\quad  \uA_{t,\la}(j)=\la_j^{-1}\, \begin{bmatrix}
	v_j-t &-1\\  1 & 0 	
\end{bmatrix}  $$
with $\la_1=\la$ and $\la_2=1$.
The random Schr\"odinger operator $H_\la$ with potential $V$ taking   value  
$V=\la$ with probability $p$  and   value $V=a$  with probability $1-p$, respectively,
has spectrum  
$$\sigma( H_\la)=[a-2,a+2] \cup [\la-2,\la+2],$$ see~\cite[Theorem 4.1]{Dam-survey},
and as a function of the energy $t$, its Lyapunov exponent is H\"older continuous on $\sigma(H_\la)$ and real analytic on $\R\setminus \sigma(H_\la)$.

B.  Halperin in~\cite{Halperin67}, see also~\cite[Theorem A.3.1]{ST85}, proved that for this specific model  the exponent in the H\"older  regularity  of both the IDS and the Lyapunov exponent decays  to $0$ as $\la\to +\infty$.  
This relates well with the fact that in~\cite[Example 3]{CraigSimon} the IDS is discontinuous
and  the Lyapunov exponent of the limiting cocycle $\uA_t$ is also almost everywhere discontinuous in $[a-2,a+2]$.
Indeed, for $t\in [a-2,a+2]$, the matrix $\uA_t(2)=\begin{bmatrix}
	a-t & -1 \\ 1 & 0
\end{bmatrix}$ is elliptic 
and hence the equation
$$ \uA_t(2)^n\, \begin{bmatrix}
	1 \\ 0
\end{bmatrix} =\begin{bmatrix}
 0 \\ 1
\end{bmatrix}$$
has infinitely many dense solutions in $[a-2,a+2]$, which correspond to null words of $\uA_t$.
On the other hand for  $t\notin [a-2,a+2]$,
the matrix $\uA_t(2)$ is hyperbolic and there exist no null words. In particular $\uA_t$ is projectively uniformly hyperbolic in this case.

It is not  be difficult to see that for every $t\in \R$,
$$ \lim_{\la\to +\infty} L_1(\uA_{\la, t})=L_1(\uA_t)$$
(but we omit the proof) which implies that the Schr\"odinger cocycle with potential $V$ (with values $a$ and $\lambda$) 
has Lyapunov exponent
$$ L_1(S_{\la, t})\sim p\, \log \la + L_1(\uA_t) \text{ as } \la\to +\infty .$$

\end{example}

\medskip

\subsection*{Acknowledgments}

P.D. was partially supported by FCT - Funda\c{c}\~{a}o para a Ci\^{e}ncia e a Tecnologia, through
the projects   UIDB/04561/2020 and  PTDC/MAT-PUR/29126/2017.
M.D. was supported by a CNPq doctoral fellowship. This study was financed in part by the Coordena\c{c}\~{a}o de Aperfei\c{c}oamento de Pessoal de N\'{i}vel Superior - Brasil (CAPES) - Finance Code 001.
T.G. was supported by FCT - Funda\c{c}\~{a}o para a Ci\^{e}ncia e a Tecnologia, through the projects UI/BD/152275/2021 and by CEMS.UL Ref. UIDP/04561/2020, DOI 10.54499/UIDP/04561/ \\2020.
S.K. was supported by supported by the CNPq research grant 313777/2020-9 and  by the Coordena\c{c}\~ao de Aperfei\c{c}oamento de Pessoal de N\'ivel Superior - Brasil (CAPES) - Finance Code 001.

\bigskip

\bibliographystyle{amsplain}
\bibliography{bib}

\end{document}